\newcommand{\textcyr}[1]{{\fontencoding{OT2}\fontfamily{wncyr}\fontseries{m}\fontshape{n}
     \selectfont #1}}
\newcommand{\ilim}[1]{\displaystyle{\lim_{\genfrac{}{}{0pt}{}{\longleftarrow}{\scriptstyle #1}} }\;}
\newcommand{\Sha}{{\mbox{\textcyr{Sh}}}}
\newcommand{\Zha}{{\mbox{\textcyr{Zh}}}}
\newcommand{\Gal}{{\operatorname{Gal}}}
\newcommand{\rk}{{\operatorname{rk}}}
\newcommand{\cyc}{{\operatorname{cyc}}}
\newcommand{\coker}{{\operatorname{coker}}}
\newcommand{\Pic}{{\operatorname{Pic}}}
\newcommand{\cd}{{\operatorname{cd}}}
\newcommand{\tors}{{\operatorname{tors}}}
\newcommand{\im}{{\operatorname{im}}}
\newcommand{\Hom}{{\operatorname{Hom}}}
\newcommand{\Sel}{{\operatorname{Sel}}}
\newcommand{\Tr}{{\operatorname{Tr}}}
\newtheorem{theorem}{Theorem}[section]
\newtheorem{lemma}[theorem]{Lemma}
\newtheorem{proposition}[theorem]{Proposition}
\newtheorem{hypothesis}[theorem]{Hypothesis}
\newtheorem{conjecture}[theorem]{Conjecture}
\newtheorem{corollary}[theorem]{Corollary}
\newenvironment{proof}[1][Proof]{\begin{trivlist}
\item[\hskip \labelsep {\bfseries #1}]}{\end{trivlist}}
\newenvironment{definition}[1][Definition]{\begin{trivlist}
\item[\hskip \labelsep {\bfseries #1}]}{\end{trivlist}}
\newenvironment{example}[1][Example]{\begin{trivlist}
\item[\hskip \labelsep {\bfseries #1}]}{\end{trivlist}}
\newenvironment{remark}[1][Remark]{\begin{trivlist}
\item[\hskip \labelsep {\bfseries #1}]}{\end{trivlist}}
\newcommand{\qed}{\nobreak \ifvmode \relax \else
      \ifdim\lastskip<1.5em \hskip-\lastskip
      \hskip1.5em plus0em minus0.5em \fi \nobreak
      \vrule height0.75em width0.5em depth0.25em\fi}
\begin{document}

\title{Some remarks on the two-variable main conjecture of Iwasawa theory for elliptic curves without complex multiplication}
\author{Jeanine Van Order \footnote{The author acknowledges support from the Swiss National 
Science Foundation (FNS) grant 200021-125291.}}
\date{}

\maketitle
\vspace{-1cm}

\begin{abstract}
We establish several results towards the two-variable main conjecture of Iwasawa theory for elliptic curves without complex multiplication 
over imaginary quadratic fields, namely (i) the existence of an appropriate $p$-adic $L$-function, building on works of Hida and Perrin-Riou, 
(ii) the basic structure theory of the dual Selmer group, following works of Coates, Hachimori-Venjakob, et al., and (iii) the 
implications of dihedral or anticyclotomic main conjectures with basechange. The result of (i) is deduced from the construction of Hida and 
Perrin-Riou, which in particular is seen to give a bounded distribution. The result of (ii) allows us to deduce a corank 
formula for the $p$-primary part of the Tate-Shafarevich group of an elliptic curve in the ${\bf{Z}}_p^2$-extension of 
an imaginary quadratic field. Finally, (iii) allows us to deduce a criterion for one divisibility of the two-variable main conjecture 
in terms of specializations to cyclotomic characters, following a suggestion of Greenberg, as well as a refinement 
via basechange. \end{abstract}

\section{Introduction}

Fix a prime $p \in {\bf{Z}}$. Given a profinite group $G$, let $\Lambda(G)$ denote
the ${\bf{Z}}_p$-Iwasawa algebra of $G$, which is the completed group ring 
\begin{align*}\Lambda(G) = {\bf{Z}}_p[[G]] = \varprojlim_{U} {\bf{Z}}_p[G/U].\end{align*}
Here, the projective limit runs over all open normal subgroups $U$ of $G$. Note that the elements of $\Lambda(G)$
can be viewed in a natural way as ${\bf{Z}}_p$-valued measures on $G$. Let $E$ be an elliptic curve defined over ${\bf{Q}}$ 
of conductor $N$. Hence $E$ is modular by fundamental work of Wiles \cite{Wi}, Taylor-Wiles \cite{TW}, and Breuil-Conrad-
Diamond-Taylor \cite{BCDT}, with Hasse-Weil $L$-function $L(E,s)$ given by that of a cuspidal newform $f \in S_2(\Gamma_0(N))$. 

Let $k$ be an imaginary quadratic field. The Hasse-Weil $L$-function $L(E/k, s)$ of $E$ over $k$ is given by the Rankin-Selberg 
$L$-function $L(f \times \Theta_k, s)$, where $\Theta_k$ is the theta series associated to $k$ by a classical construction (as described 
for instance in \cite{GZ}). Let $k_{\infty}$ denote the compositum of all ${\bf{Z}}_p$-extensions of $k$, which by class field theory is
a ${\bf{Z}}_p^2$-extension. Let $G$ denote the Galois group $\Gal(k_{\infty}/k)$. The complex conjugation automorphism of $\Gal(k/{\bf{Q}})$ 
acts on $G$ with eigenvalues $\pm 1$. Let $k^{\cyc}$ denote the ${\bf{Z}}_p$-extension associated to the $+1$-eigenspace, which is the cyclotomic 
${\bf{Z}}_p$-extension of $k$. Let $D_{\infty}$ denote the ${\bf{Z}}_p$-extension associated to the $-1$-eigenspace, which is the dihedral or anticyclotomic 
${\bf{Z}}_p$-extension of $k$. Let $\Gamma$ denote the cyclotomic Galois group $\Gal(k^{\cyc}/k)$, and let $\Omega$ denote the dihedral or anticyclotomic 
Galois group $\Gal(D_{\infty}/k)$. Let $H$ denote the Galois group $\Gal(k_{\infty}/k^{\cyc})$, which is naturally isomorphic to $\Omega \cong {\bf{Z}}_p$.
Let $X(E/k_{\infty})$ denote the Pontryagin dual of the $p^{\infty}$-Selmer group of $E$ over $k_{\infty}$, which has the natural structure of 
a compact $\Lambda(G)$-module. The subject of this note is the following conjecture, made in the spirit of Iwasawa (but often attributed to Greenberg and
Mazur), known as the {\it{two-variable main conjecture of Iwasawa theory for elliptic curves}}:

\begin{conjecture}\label{2vmc}
Let $E$ be an elliptic curve defined over ${\bf{Q}}$, and $p$ a prime where $E$ has either good ordinary 
or multiplicative reduction. 

\begin{itemize}

\item[(i)] There is a unique element $L_p(E/k_{\infty}) \in \Lambda(G)$ that interpolates $p$-adically the central values 
$L(E/k, \mathcal{W}, 1)/ \Omega_f$. Here, $L(E/k, \mathcal{W}, s)$ is the Hasse-Weil $L$-function of $E$ over $k$ twisted
by a finite order character $\mathcal{W}$ of $G$, and $\Omega_f$ is a suitable complex period for which the quotient 
$L(E/k, \mathcal{W}, 1)/ \Omega_f$ lies in $\overline{{\bf{Q}}}$ (and hence in $\overline{{\bf{Q}}}_p$ via any fixed embedding 
$\overline{{\bf{Q}}} \rightarrow \overline{{\bf{Q}}}_p$).

\item[(ii)] The dual Selmer group $X(E/k_{\infty})$ is $\Lambda(G)$-torsion, hence has a $\Lambda(G)$-characteristic power 
series $\operatorname{char}_{\Lambda(G)}X(E/k_{\infty})$.

\item[(iii)] The equality of ideals $\left( L_p(E/k_{\infty}) \right) = \left( \operatorname{char}_{\Lambda(G)}X(E/k_{\infty}) \right)$ holds in $\Lambda(G).$
\end{itemize}

\end{conjecture}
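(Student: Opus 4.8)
The plan is to treat the three assertions of the conjecture separately, carrying out (i) and (ii) unconditionally and reducing (iii) to one-variable main conjectures. For (i), I would exploit that $L(E/k,\mathcal{W},s)=L(f\times\Theta_k\otimes\mathcal{W},s)$ is a Rankin--Selberg convolution and build $L_p(E/k_\infty)$ from the $p$-adic Rankin product of $f$ against the theta series attached to $k$. First I fix the $p$-stabilisation of $f$ with respect to the unit root $\alpha$ of $X^2-a_pX+p$ (respectively $\alpha=a_p$ when $E$ is multiplicative at $p$) --- this is precisely where the hypothesis of good ordinary or multiplicative reduction at $p$ is used. Hida's construction of $p$-adic Rankin products, in the refined form due to Perrin-Riou, then yields a distribution on $G=\Gal(k_\infty/k)$. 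Two things must be checked: (a) the interpolation formula at a finite-order character $\mathcal{W}$ of $G$, which follows from the classical integral representation of the Rankin--Selberg $L$-function together with Shimura's algebraicity theorem, the latter also pinning down the period $\Omega_f$ and the local factors at $p$ and at the ramified primes; and (b) that the distribution is bounded, i.e.\ lies in $\Lambda(G)$ and is not merely $h$-admissible. Boundedness is the crux of (i): it is exactly the content of Perrin-Riou's refinement of Hida's construction in the ordinary case, the single unit root forcing the coefficients of the Amice transform to be bounded.

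For (ii), observe that $G\cong{\bf{Z}}_p^2$, so $\Lambda(G)$ is a regular complete local domain of Krull dimension $3$ (hence a UFD), and the exact sequence $1\to H\to G\to\Gamma\to 1$ gives $\Lambda(G)\cong\Lambda(\Gamma)[[H]]$. By Kato's theorem the cyclotomic Selmer group $\Sel(E/k^{\cyc})$ is $\Lambda(\Gamma)$-cotorsion (Kato applies because $k/{\bf{Q}}$ is abelian, the base change $E/k$ being handled via $E/{\bf{Q}}$ and its quadratic twist). I would then run the control theorem in the $H$-direction, comparing $\Sel(E/k^{\cyc})$ with $\Sel(E/k_\infty)^{H}$: the local conditions at the primes of $k$ above $p$ are well behaved because $E$ has good ordinary or multiplicative reduction there, so the kernel and cokernel of the restriction map are $\Lambda(\Gamma)$-cotorsion of bounded corank. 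Feeding this into the descent formalism for $\Lambda(G)$-modules (following Coates, Hachimori--Venjakob, Ochi--Venjakob) upgrades the cotorsion-ness of $\Sel(E/k^{\cyc})$ to $\Lambda(G)$-torsion-ness of $X(E/k_\infty)$, so the characteristic ideal exists. The corank formula for $\Sha(E/k_\infty)[p^\infty]$ announced in the introduction then follows by combining this with the global Euler characteristic computation for $X(E/k_\infty)$.

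For (iii), I would establish the equality of ideals by two divisibilities in $\Lambda(G)$. The inclusion $(L_p(E/k_\infty))\subseteq(\operatorname{char}_{\Lambda(G)}X(E/k_\infty))$ is the Euler-system divisibility: I would use a two-variable Euler system for the Rankin--Selberg situation --- Beilinson--Flach classes in the imaginary quadratic specialisation, or Kato's cyclotomic Euler system propagated along the anticyclotomic variable by Heegner points --- and run the Euler-system bound over $\Lambda(G)$. For the reverse inclusion I would follow Greenberg's suggestion and specialise the cyclotomic variable at finite-order characters $\psi$ of $\Gamma$: since $\psi$ is then a finite-order Hecke character of $k$, each specialisation is, after basechange, a dihedral or anticyclotomic main conjecture for $E/k$ twisted by $\psi$, where the needed divisibility is available in many cases through the work of Bertolini--Darmon, Howard, Pollack--Weston, Chida--Hsieh, and Hsieh (and, projecting instead to the cyclotomic line $k^{\cyc}$, through Skinner--Urban); one then reassembles the one-variable divisibilities into the two-variable statement.

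The hard part is the reverse divisibility in (iii). Reassembling the cyclotomic specialisations into a divisibility in $\Lambda(G)$ is valid only up to controlled corrections --- at the height-one primes over $p$, at the trivial or exceptional zero occurring in the multiplicative reduction case, and wherever nonzero $\mu$-invariants intervene --- and the one-variable inputs are themselves available only under hypotheses that must be checked to be compatible with the given $E$, $k$, and $p$; this is why one should expect, at present, only a criterion for one divisibility rather than the full equality, and it is also why the $p$-adic $L$-function of (i) must be normalised so precisely that its cyclotomic specialisations match the known one-variable $p$-adic $L$-functions exactly. A secondary technical obstacle is the control theorem of (ii) when $p$ is inert or ramified in $k$, where the local terms at $p$ must be analysed directly rather than quoted.
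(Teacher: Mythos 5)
You are being asked to ``prove'' a statement that the paper itself presents as a \emph{conjecture} (Conjecture \ref{2vmc}); the paper does not prove it, and neither do you. Your proposal is, rather, a plan of attack that correctly identifies what the paper actually establishes as partial results, and it is to your credit that you flag in the final paragraph that only a criterion for one divisibility in (iii) can be expected at present. With that framing in mind, a few points of comparison with the paper's actual contributions are worth spelling out.

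For (i) your route is essentially identical to the paper's: $p$-stabilize $f$ via the unit root $\alpha$, invoke Hida's bounded linear form and Perrin-Riou's convolution construction, check interpolation via Shimura's algebraicity theorem, and verify boundedness. The paper spends most of its effort on the integrality/boundedness point (Lemmas \ref{hidaintegrality}--\ref{hidaintegrality}, Proposition \ref{integrality}), so you have correctly identified where the work lies. For (ii) you again match the paper: Kato--Rohrlich gives $\Lambda(\Gamma)$-cotorsion over $k^{\cyc}$, and the Coates--Schneider--Sujatha / Hachimori--Venjakob descent in the $H$-direction promotes this to $\Lambda(G)$-torsion (Theorem \ref{torsion}). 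One small difference of emphasis: the paper's argument runs through surjectivity of the global-to-local map $\lambda_S(k_\infty)$ via the Cassels--Poitou--Tate sequence and vanishing of $H^2(G_S(k^{\cyc}),E_{p^\infty})$, rather than a control theorem in the usual ``bounded kernel and cokernel'' form. Both establish the same result.

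For (iii) there is a genuine divergence. You propose to get the inclusion $\left(L_p(E/k_\infty)\right)\subseteq\left(\operatorname{char}_{\Lambda(G)}X(E/k_\infty)\right)$ from a two-variable Euler system (Beilinson--Flach or a Heegner-propagated Kato class); the paper does not construct any such Euler system. Instead it obtains precisely this direction via Greenberg's specialization criterion (Theorem \ref{greenberg} / Corollary \ref{gpw}): assuming the divisibility $\left(\psi(L_p)\right)\subseteq\left(\psi(\operatorname{char})\right)$ at each cyclotomic finite-order $\psi$, and that $p\nmid g(T_1,0)$ (which Pollack--Weston's anticyclotomic $\mu=0$ theorem supplies under Hypothesis \ref{XXX}), a Weierstrass-preparation argument forces the full two-variable inclusion. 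You have the direction labels reversed: you attribute the Greenberg specialization argument to the ``reverse'' inclusion, but in the paper Greenberg's criterion is exactly what yields the Euler-system-direction divisibility $\left(L_p\right)\subseteq\left(\operatorname{char}\right)$; the one-variable inputs at each $\psi$ are where Bertolini--Darmon, Howard, Pollack--Weston, and basechange enter (Proposition \ref{bccrit}). The paper says nothing about the opposite divisibility. So your overall assessment is right, your toolbox is right, but the bookkeeping of which specialization argument proves which inclusion should be corrected, and you should note that a direct two-variable Euler system bound is not what the paper uses.
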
 In the setting where $E$ has complex multiplication by $k$, much is known about this conjecture thanks to work of 
Rubin \cite{Ru} (see also \cite{Ru2}), building on previous work of Coates-Wiles \cite{CW} and Yager \cite{Ya}. Here, we consider
the somewhat more mysterious setting where $E$ does {\it{not}} have complex multiplication, and in particular what can be deduced from known
Iwasawa theoretic results for the one-variable cases corresponding to the Galois groups $\Gamma$ and $\Omega$.
 
We start with the construction of $p$-adic $L$-functions, (i).  
Given a finite order character $\mathcal{W}$ of $G$, let $\mathcal{W}\left(\lambda\right)$ denote the specialization to 
$\mathcal{W}$ of an element $ \lambda \in \Lambda(G)$. That is, writing $d\lambda$ to denote the measure associated to $\lambda$, 
let $$\mathcal{W}\left(\lambda\right) = \int_G \mathcal{W}(g)\cdot
d\lambda(g).$$ Fix a cuspidal Hecke eigenform $f \in S_2(\Gamma_0(N))$
of weight $2$, level $N$, and trivial Nebentypus. Such an eigenform $f \in S_2(\Gamma_0(N))$ is
said to be {\it{$p$-ordinary}} if its $T_p$-eigenvalue is a $p$-adic
unit with respect to any embedding $\overline{\bf{Q}}
\rightarrow \overline{{\bf{Q}}}_p.$ Let \begin{align*} \langle f, f \rangle_N 
= \int_{\Gamma_0(N) \backslash \mathfrak{H}} \vert f \vert^2 dx dy \end{align*} denote the Petersson
inner product of $f $ with itself. Let $L(f \times \Theta(\mathcal{W}), s)$ denote the
Rankin-Selberg $L$-function of $f$ times the theta series $ \Theta(\mathcal{W})$ associated
to $\mathcal{W}$, normalized to have central value at $s=1$. The
ratio \begin{align*} \frac{L(f \times \Theta(\mathcal{W}), 1)}{8 \pi^2 \langle f ,f \rangle_N} \end{align*} 
lies in $\overline{\bf{Q}}$ by an important theorem of Shimura \cite{Sh1}. Using this fact, along with
the constructions of Hida \cite{Hi} and Perrin-Riou \cite{PR0}, we obtain the following result.

\begin{theorem}[Theorem  \ref{2vinterpolation}]\label{2vplfn}  

Fix an embedding $\overline{\bf{Q}} \rightarrow \overline{\bf{Q}}_p.$ Let $f \in S_2(\Gamma_0(N))$ 
be a $p$-ordinary eigenform of weight $2$, level $N$, and trivial Nebentypus. Assume that $N$ is 
prime to the discriminant of $k$, and that $p \geq 5$. There exists an element $\mu_f
\in \Lambda(G)$ whose specialization to any finite order character $\mathcal{W}$ of 
$G$ satisfies the interpolation formula \begin{align*}\mathcal{W}\left( \mu_f \right) = \eta\cdot
\frac{L(f \times \Theta(\overline{\mathcal{W}}), 1)}{8 \pi^2 \langle f ,f
\rangle_N} \in \overline{{\bf{Q}}}_p, \end{align*} where $\eta = \eta(f, \mathcal{W})$ is a certain 
explicit (nonvanishing) algebraic number.
\end{theorem}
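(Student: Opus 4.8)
The plan is to realize $\mu_f$ as a $\Lambda(G)$-adic Rankin--Selberg convolution following the method of Hida \cite{Hi}, and to cross-check the resulting distribution --- in particular its boundedness --- against the modular symbol construction of Perrin-Riou \cite{PR0}. The first step is a dictionary: a finite order character $\mathcal{W}$ of $G = \Gal(k_{\infty}/k)$ is the same datum as a finite order Hecke character of $k$ unramified outside $p$; its theta series $\Theta(\overline{\mathcal{W}}) = \sum_{\mathfrak{a}} \overline{\mathcal{W}}(\mathfrak{a}) q^{N \mathfrak{a}}$ is a weight-one CM cusp form of level dividing $|\disc(k)| \cdot N \mathfrak{f}_{\mathcal{W}}$; and by automorphic induction one has $L(f \times \Theta(\overline{\mathcal{W}}), s) = L(f/k \otimes \overline{\mathcal{W}}, s)$, so that (up to the harmless substitution $\mathcal{W} \mapsto \overline{\mathcal{W}}$) the values to be interpolated are exactly the twisted base-change central values of Conjecture \ref{2vmc}(i). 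Because $f$ is $p$-ordinary and $N$ is prime to $\disc(k)$, the base change $f/k$ is nearly ordinary at the primes of $k$ above $p$, which is what renders the $p$-adic interpolation possible.

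Second, I would assemble the $\Lambda(G)$-adic family of theta series. The formal $q$-expansion $\boldsymbol{\Theta} = \sum_{m \geq 1} \big( \sum_{N \mathfrak{a} = m} \overline{\mathcal{W}}(\mathfrak{a}) [\mathfrak{a}] \big) q^m$, where $[\mathfrak{a}] \in G$ denotes the image of the class of $\mathfrak{a}$, is a $\Lambda(G)$-adic modular form of weight one: evaluating the group-like elements $[\mathfrak{a}]$ at a finite order character $\nu$ of $G$ recovers the classical CM form $\Theta(\overline{\mathcal{W}}\nu)$. One then passes to an ordinary $p$-stabilization $\boldsymbol{\Theta}^{\ord}$ with respect to the primes of $k$ above $p$ --- here $p \geq 5$ and $(N, \disc k) = 1$ ensure that the relevant ordinary Hida Hecke algebra is well behaved and that the stabilization exists, uniformly in whether $p$ splits, is inert, or ramifies in $k$ --- and forms the Rankin--Selberg product $e^{\ord}\big( f \cdot \boldsymbol{\Theta}^{\ord} \cdot \mathcal{E} \big)$, with $e^{\ord}$ Hida's ordinary projector and $\mathcal{E}$ an auxiliary weight-one Eisenstein family inserted to balance weights. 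Applying the $f$-isotypic linear functional $\ell_f$ of Hida's theory yields an element $\mu_f$ of the total ring of fractions of $\Lambda(G)$.

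Third, one verifies integrality and the interpolation formula. Boundedness --- that $\mu_f \in \Lambda(G)$ and not merely in the ring of fractions --- follows because the construction factors through the ordinary $\Lambda(G)$-adic Hecke algebra, which is finite and free over $\Lambda(G)$, while the Petersson normalization $8 \pi^2 \langle f, f \rangle_N$ absorbs the congruence denominator of $f$; independently, Perrin-Riou's two-variable modular symbol construction \cite{PR0} furnishes a bounded measure with the same values at finite order characters, and these determine the measure uniquely (Conjecture \ref{2vmc}(i)), so the two coincide. The interpolation formula is then obtained by specialization at a finite order character $\nu$ of $G$: the $\Lambda(G)$-adic Rankin--Selberg product specializes to the classical Rankin--Selberg integral $\langle f, \Theta(\overline{\mathcal{W}}\nu) \mathcal{E}_{\nu} \rangle_N$, whose unfolding together with Shimura's theorem \cite{Sh1} gives $\nu(\mu_f) = \eta \cdot L(f \times \Theta(\overline{\mathcal{W}}\nu), 1)/(8 \pi^2 \langle f, f \rangle_N)$ with $\eta = \eta(f, \mathcal{W}\nu) \in \overline{\bf{Q}}$; this explicit factor collects the modified Euler factor(s) at $p$ coming from the ordinary $p$-stabilization, a Gauss sum attached to the nebentypus of $\Theta(\overline{\mathcal{W}}\nu)$, and the elementary constants of the Rankin--Selberg unfolding, from which its nonvanishing is manifest.

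I expect the main obstacle to be twofold. First, proving boundedness of the distribution uniformly in the splitting type of $p$ in $k$: the inert and ramified cases are the delicate ones, since there the nearly-ordinary structure at the prime above $p$ is not cut out by a splitting of the local Galois representation, and it is precisely here that Perrin-Riou's modular symbol construction must be brought in to supplement Hida's. Second, pinning down the exact $p$-adic multiplier inside $\eta$, which requires tracking the $p$-stabilization carefully through the Rankin--Selberg unfolding. By contrast, the facts that $\boldsymbol{\Theta}$ is a genuine cuspidal $\Lambda(G)$-adic form of the stated level and that $e^{\ord}$ commutes with specialization are, given $p \geq 5$, standard in Hida theory.
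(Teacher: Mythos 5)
Your proposal is in the right spirit---realize $\mu_f$ via a Rankin--Selberg convolution with a family of CM theta series and Hida's linear form---but it diverges from the paper's actual proof (which follows Perrin-Riou's \cite{PR0} measure-theoretic construction) in a way that opens a genuine gap in the boundedness step. The paper never forms a ``$\Lambda(G)$-adic ordinary theta family $\boldsymbol{\Theta}^{\ord}$'' or invokes a ``$\Lambda(G)$-adic Hecke algebra''; instead, it builds the distribution as a $\rho$-twisted sum over ideal classes $A \in \Pic \mathcal{O}_{c(\rho)}$ of convolutions $l_{f_0} \circ \Tr_N^{N\Delta} \circ \Phi_A^C(\chi)(a,p^m)$, and the integrality is verified constituent by constituent: theta measures from the $q$-expansion (Lemma \ref{thetaintegrality}), Eisenstein measures from Katz's Key Lemma (Lemma \ref{eisensteinintegrality}), traces from the explicit Fourier computations of Gross and Gross--Zagier (Lemma \ref{traceintegrality}), and $l_{f_0}$ from the splitting $(\ref{hida4.4})$ of the ordinary Hecke algebra (Lemma \ref{hidaintegrality}). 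That explicit chain of lemmas is the content of the proof---per the abstract it is precisely what is being added to \cite{PR0}---and your sketch does not reproduce it.

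The concrete gaps are these. First, the ``ordinary $\Lambda(G)$-adic Hecke algebra, finite and free over $\Lambda(G)$'' is not an established object in the present context. Hida's finiteness--freeness theorem is over a one-variable \emph{weight} Iwasawa algebra; here both variables of $G\cong \mathbf{Z}_p^2$ are \emph{twist} variables (anticyclotomic and cyclotomic), the theta family $\boldsymbol{\Theta}$ stays in weight one throughout, and there is no weight interpolation from which to import the structure theory. Second, ordinary $p$-stabilization of $\boldsymbol{\Theta}$ ``at the primes of $k$ above $p$'' does not exist when $p$ is inert in $k$, since the theta series then has $a_p=0$; the paper avoids this altogether by stabilizing only $f$ (the map $f \mapsto f_0$) and handling the theta part as a partial $q$-expansion measure $\Theta_A(\chi)(a,p^m)$ on congruence classes, not as a $p$-stabilized family. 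You flag the inert/ramified case as delicate and propose to ``bring in Perrin-Riou to supplement Hida,'' but that effectively concedes the primary argument and reduces to citing the result being proved; and the uniqueness appeal (``finite-order values determine the measure'') only distinguishes between measures, not between a measure and an unbounded distribution, so it cannot by itself confer boundedness. Finally, you do not identify the actual role of $p \geq 5$: in the paper it is imposed to control the $\tfrac{1}{u}$ constant term of $\Theta_A(\chi)$ when $\rho = \mathbf{1}$ (the only non-cuspidal case), where $u = 2|\mathcal{O}_k^\times|$ could be $4$ or $6$.
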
 Hence, we obtain a $p$-adic $L$-function $L_p(E/k_{\infty}) = L_p(f/k_{\infty}) \in \Lambda(G)$
associated to this measure $\mu_f$.

\begin{remark}The two-variable $p$-adic $L$-function $L_p(f/k_{\infty})$ 
corresponding to $d\mu_f$ also satisfies a functional equation,
as described in Corollary \ref{FE} below. \end{remark} We now consider
the Iwasawa module structure theory of (ii), using standard techniques. Recall
that we let $H$ denote the Galois group $\Gal(k_{\infty}/k^{\cyc})$, 
which is naturally isomorphic to the dihedral or anticyclotomic Galois 
group $\Omega \cong {\bf{Z}}_p$. If $E$ has good ordinary reduction at 
$p$, then an important theorem of Kato \cite{KK} with a nonvanishing theorem of Rohrlich \cite{Ro}
implies that the dual Selmer group $X(E/k^{\cyc})$ is $\Lambda(\Gamma)$-torsion. To be more
precise, the construction of Kato \cite{KK} with the nonvanishing theorem of Rohlich \cite{Ro} show 
that the dual Selmer group $X(E/{\bf{Q}}^{\cyc})$ is $\Lambda(\Gal({\bf{Q}}^{\cyc}/{\bf{Q}}))$-torsion, 
where ${\bf{Q}}^{\cyc}$ denotes the cyclotomic ${\bf{Z}}_p$-extension of ${\bf{Q}}$. It then follows from
a simple restriction argument, using Artin formalism for abelian $L$-functions, that the analogous structure theorem holds 
for $E$ in the cyclotomic ${\bf{Z}}_p$-extension of any abelian number field. In particular, $X(E/k^{\cyc})$ is $\Lambda(\Gamma)$-torsion, 
and hence has a $\Lambda(\Gamma)$-characteristic power series with associated cyclotomic Iwasawa invariants 
$\mu_E(k) = \mu_{\Lambda(\Gamma)}\left( X(E/k^{\cyc}) \right)$ and $\lambda_E(k) =\lambda_{\Lambda(\Gamma)}
\left( X(E/k^{\cyc}) \right)$. Using this result, we then deduce the following structure theorem for the dual Selmer group $X(E/k_{\infty})$.

\begin{theorem} Let $E/{\bf{Q}}$ be an elliptic curve with good ordinary reduction at each prime above 
$p$ in k.

\begin{itemize}

\item[(i)] (Theorem \ref{torsion}) The dual Selmer group $X(E/ k_{\infty})$ is $\Lambda(G)$-torsion, hence 
has a $\Lambda(G)$-characteristic power series $\operatorname{char}_{\Lambda(G)}X(E/k_{\infty})$.

\item[(ii)] (Theorem \ref{2mu}) If the cyclotomic invariant $\mu_E(k)$ vanishes, then the two-variable invariant 
$\mu_{\Lambda(G)}\left( X(E/k_{\infty}) \right)$ also vanishes.

\item[(iii)] (Theorem \ref{Euler}) Let $\operatorname{char}_{\Lambda(G)} X(E/k_{\infty})(0)$ denote the image of the characteristic
power series $\operatorname{char}_{\Lambda(G)}X(E/k_{\infty})$ under the augmentation map $\Lambda(G) 
\longrightarrow {\bf{Z}}_p$. If $p \geq 5$ and the $p^{\infty}$-Selmer group $\Sel(E/k)$ is finite, then

\begin{align*}
\vert \operatorname{char}_{\Lambda(G)}X(E/k_{\infty})(0)\vert_p &=  \frac{\vert E(k)_{p^{\infty}} \vert^2 }{\vert \Sha(E/k)(p)\vert} \cdot 
\frac{\prod_v \vert c_v \vert_p}{\prod_{v \mid p}\vert \widetilde{E}_v(\kappa_v)(p)\vert^2 }.
\end{align*} Here, $\Sha(E/k)(p)$ denotes the $p$-primary part of the Tate-Shafarevich group $\Sha(E/k)$ of $E$ over $k$, 
$E(k)_{p^{\infty}}$ the $p$-primary part of the Mordell-Weil group $E(k)$, $\kappa_v$ the residue field at $v$, $\widetilde{E}_v$ the
reduction of $E$ over $\kappa_v$, and $c_v=[E(k_v):E_0(k_v)]$ the local Tamagawa factor at a prime 
$v \subset \mathcal{O}_k$. 

\item[(iv)] (Theorem \ref{mhg}) If $\mu_E(k)=0$, then there is an isomorphism of $\Lambda(H)$-modules $X(E/k_{\infty}) \cong \Lambda(H)^{\lambda_E(k)}$.

\end{itemize}

\end{theorem}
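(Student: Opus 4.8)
The plan is to establish the four assertions in turn, in each case by descending from the ${\bf{Z}}_p^2$-extension $k_{\infty}$ to the cyclotomic line $k^{\cyc}$ along the anticyclotomic direction $H = \Gal(k_{\infty}/k^{\cyc}) \cong {\bf{Z}}_p$, and feeding in the known cyclotomic input: the Kato--Rohrlich $\Lambda(\Gamma)$-torsion of $X(E/k^{\cyc})$, and, for the leading-term formula, the one-variable Euler characteristic formula of Perrin-Riou and Schneider. Throughout write $\Lambda(G) = \Lambda(\Gamma)[[H]] = \Lambda(H)[[\Gamma]]$, fix a topological generator $\gamma_H$ of $H$, put $T_H = \gamma_H - 1$ so that $\Lambda(\Gamma) = \Lambda(G)/(T_H)$, and recall that for $p \geq 5$ with good ordinary reduction one has the control theorem: restriction of Selmer groups induces $X(E/k_{\infty})_H \longrightarrow X(E/k^{\cyc})$ with finite kernel and cokernel, the error terms being governed by $H^i(H, E(k_{\infty})[p^{\infty}])$ and the semi-local terms at the primes dividing $Np$, all finite here. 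For part (i): since $X(E/k^{\cyc})$ is $\Lambda(\Gamma)$-torsion by Kato \cite{KK} together with the nonvanishing theorem of Rohrlich \cite{Ro}, the control map shows $X(E/k_{\infty})_H$ is $\Lambda(\Gamma)$-torsion; equivalently $X(E/k_{\infty})$ vanishes after tensoring over $\Lambda(G)$ with the residue field of the discrete valuation ring $\Lambda(G)_{(T_H)}$ (that residue field being $\operatorname{Frac}\Lambda(\Gamma)$). By Nakayama's lemma over that ring, $X(E/k_{\infty})_{(T_H)} = 0$; hence $\Ann_{\Lambda(G)}X(E/k_{\infty})$ is not contained in the height-one prime $(T_H)$, in particular is nonzero, and $X(E/k_{\infty})$ is $\Lambda(G)$-torsion, proving Theorem \ref{torsion}.

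For parts (ii) and (iv), assume $\mu_E(k) = 0$, i.e. $X(E/k^{\cyc})$ is finitely generated over ${\bf{Z}}_p$. Then $X(E/k_{\infty})_H$ is finitely generated over ${\bf{Z}}_p$ by the control map, so topological Nakayama over $\Lambda(H)$ --- whose maximal ideal is $(p, T_H)$ and whose residue field is ${\bf{F}}_p$, using $X(E/k_{\infty})_H / p \cong X(E/k_{\infty})/(p, T_H)$ --- shows $X(E/k_{\infty})$ is finitely generated over $\Lambda(H)$. Its $p$-power torsion submodule $X(E/k_{\infty})(p)$ is then finitely generated over $\Lambda(H)$ and killed by a fixed power of $p$, hence finitely generated over ${\bf{Z}}/p^n{\bf{Z}}[[T_H]]$, a ring of Krull dimension one; so $X(E/k_{\infty})(p)$ has support of dimension at most one in $\operatorname{Spec}\Lambda(G)$, is pseudo-null over $\Lambda(G)$, and $\mu_{\Lambda(G)}(X(E/k_{\infty})) = 0$, which is Theorem \ref{2mu}. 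For Theorem \ref{mhg} one further uses that $X(E/k_{\infty})$ has no nonzero pseudo-null $\Lambda(G)$-submodule (following Ochi--Venjakob under the good ordinary hypothesis): combined with finite generation over $\Lambda(H)$ this forces the $\Lambda(H)$-torsion submodule of $X(E/k_{\infty})$ to vanish, so $X(E/k_{\infty})$ is $\Lambda(H)$-torsion-free and hence embeds with finite cokernel $Q$ in its reflexive hull, which is free over $\Lambda(H)$ of rank $r = \rk_{\Lambda(H)}X(E/k_{\infty}) = \rk_{{\bf{Z}}_p}X(E/k_{\infty})_H = \lambda_E(k)$. Upgrading the control theorem to an isomorphism $X(E/k_{\infty})_H \cong X(E/k^{\cyc})$ (valid for $p \geq 5$ and good ordinary reduction, since then $E(k_{\infty})[p^{\infty}]$ is finite and the local restriction maps are injective), and using that $X(E/k^{\cyc})$ is ${\bf{Z}}_p$-free --- having $\mu_E(k) = 0$ and, by Greenberg, no finite $\Lambda(\Gamma)$-submodule --- one takes $H$-coinvariants in the sequence presenting $X(E/k_{\infty})$ inside its reflexive hull: the resulting map $X(E/k_{\infty})_H \to {\bf{Z}}_p^{r}$ has ${\bf{Z}}_p$-free source and finite kernel $Q^H$, so $Q^H = 0$, hence $Q = 0$ and $X(E/k_{\infty}) \cong \Lambda(H)^{\lambda_E(k)}$.

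For part (iii): with $\Sel(E/k)$ finite and $X(E/k_{\infty})$ $\Lambda(G)$-torsion by part (i), the $G$-Euler characteristic $\chi(G, \Sel(E/k_{\infty})) = \prod_i \# H^i(G, \Sel(E/k_{\infty}))^{(-1)^i}$ is defined and finite, and --- using in addition that $X(E/k_{\infty})$ has no nonzero pseudo-null submodule --- the standard formalism relating such Euler characteristics to the augmentation of the characteristic ideal shows that the $p$-adic size of $\operatorname{char}_{\Lambda(G)}X(E/k_{\infty})(0)$ is computed by $\chi(G, \Sel(E/k_{\infty}))$. One then evaluates the latter in two descent steps: first along $H$, reducing it to $\chi(\Gamma, \Sel(E/k^{\cyc}))$ multiplied by the $H$-cohomology of the global and semi-local torsion modules appearing in the control sequence; and then inserting the one-variable Euler characteristic formula of Perrin-Riou and Schneider for $\chi(\Gamma, \Sel(E/k^{\cyc}))$. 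A careful accounting of these error terms produces exactly the multiplicities in the statement --- in particular $\#E(k)_{p^{\infty}}$ and the local-at-$p$ factors $\prod_{v \mid p}\#\widetilde{E}_v(\kappa_v)(p)$ enter to the second power, the Tamagawa factors $\prod_v c_v$ to the first, and $\#\Sha(E/k)(p)$ to the first --- which is the displayed identity, Theorem \ref{Euler}.

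The main obstacle is part (iii): although the Euler-characteristic formalism is classical, one must verify finiteness of every intermediate cohomology group and correctly match the semi-local contributions --- especially those above $p$, which behave quadratically in the two-variable setting --- and it is here that the hypotheses $p \geq 5$ and $\Sel(E/k)$ finite are essential, ensuring that $E$ acquires no $p$-torsion over the fields in play and that the relevant control maps are genuine isomorphisms. A secondary technical input, used in part (iv), is the vanishing of pseudo-null $\Lambda(G)$-submodules of $X(E/k_{\infty})$, which one takes from the work of Ochi--Venjakob in the good ordinary case; everything else is a routine assembly of the control theorems with the structure theory of $\Lambda(G)$-modules.
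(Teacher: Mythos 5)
Your proposal is essentially correct, but it routes through genuinely different intermediate results in parts (i) and (iv), and it is worth recording what each approach buys.

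For part (i) the paper follows Coates--Schneider--Sujatha and Hachimori--Venjakob: it establishes surjectivity of the semi-local map $\lambda_S(k_\infty)$ over $k_\infty$ directly, by taking $H$-cohomology and using $\cd_p(H)=1$ together with the vanishing lemmas (Lemmas \ref{Hloc}--\ref{Jvanish}), and then invokes the standard reduction of ``torsion'' to ``$\lambda_S$ surjective.'' You instead bundle those lemmas into a ``control theorem with finite kernel and cokernel,'' deduce that $X(E/k_\infty)_H$ is $\Lambda(\Gamma)$-torsion, localize at the height-one prime $(T_H)$, and apply Nakayama over the discrete valuation ring $\Lambda(G)_{(T_H)}$ to get $X(E/k_\infty)_{(T_H)}=0$, hence $X(E/k_\infty)_{(0)}=0$ by localizing further. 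This is valid, and is in fact a clean and reusable trick; but note that the finiteness of the cokernel of the control map is itself exactly the content of Lemma \ref{Hloc}, which in turn rests on the surjectivity of $\lambda_S(k^{\cyc})$ (Theorem \ref{cycloc}, via Cassels--Poitou--Tate and Kato). So your black box contains the same hard core, just reorganized.

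For part (iv) the divergence is more substantial. The paper works along the tower $D_n^{\cyc}$: it uses Matsuno's theorem that $X(E/D_n^{\cyc})$ has no nontrivial finite $\Lambda(\Gamma_n)$-submodule together with the Hachimori--Matsuno analogue of Kida's formula (to propagate $\mu=0$ up the tower), concludes that $X(E/k_\infty)=\varprojlim_n X(E/D_n^{\cyc})$ is ${\bf{Z}}_p$-torsion-free, gets $\ker(\alpha_n)=\coker(\alpha_n)=0$, and then applies Nakayama to a lift of a ${\bf{Z}}_p$-basis. You instead invoke the Ochi--Venjakob-type result that $X(E/k_\infty)$ has no nonzero pseudo-null $\Lambda(G)$-submodule (equivalently, has $\Lambda(G)$-projective dimension $\leq 1$, coming from weak Leopoldt for $E$ over $k_\infty$), observe that the $\Lambda(H)$-torsion submodule of a module finitely generated over $\Lambda(H)$ is automatically pseudo-null over $\Lambda(G)$ (a correct Cayley--Hamilton argument), deduce $\Lambda(H)$-torsion-freeness, embed into the reflexive hull, and kill the finite cokernel by taking $H$-coinvariants against the ${\bf{Z}}_p$-free module $X(E/k^{\cyc})$. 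This is also correct, but it substitutes a stronger and more structural input (no pseudo-null submodules, i.e., weak Leopoldt over $k_\infty$) for the paper's more elementary layer-by-layer use of Matsuno. Each approach has its advantages: the paper's is more self-contained given the stated references; yours is shorter and generalizes more readily to larger $p$-adic Lie extensions.

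Parts (ii) and (iii) are essentially the same as the paper's (part (ii): finite generation over $\Lambda(H)$ plus pseudo-nullity of the $p$-torsion; part (iii): both you and the paper defer to the standard two-step Euler-characteristic descent, the paper citing Hachimori--Venjakob explicitly). One minor imprecision: you state that $p\geq 5$ ``ensures that $E$ acquires no $p$-torsion over the fields in play.'' That is not the role of the hypothesis. Finiteness of $E(k_\infty)[p^\infty]$ comes from Imai's theorem together with the inert-prime base-change trick (Lemma \ref{finite}) and holds for any $p$; the restriction $p\geq 5$ enters through the semi-local computations above $p$ (and, elsewhere in the paper, through the integrality of the theta-measures). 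Also, in (iii) you do not actually need the no-pseudo-null-submodule hypothesis to relate $\chi(G,\Sel(E/k_\infty))$ to $|\operatorname{char}_{\Lambda(G)}X(E/k_\infty)(0)|_p^{-1}$; finiteness of the Euler characteristic (which your setup already guarantees) is sufficient.
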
 We also obtain from this the following application to Tate-Shafarevich ranks.
Consider the short exact descent sequence of discrete $\Lambda(H)$-modules
\begin{align*} 0 \longrightarrow E(k_{\infty})\otimes {\bf{Q}}_p/{\bf{Z}}_p \longrightarrow \Sel(E/k_{\infty})
\longrightarrow \Sha(E/k_{\infty})(p) \longrightarrow 0. \end{align*} Here, $E(k_{\infty})$ denotes the Mordell-Weil group 
of $E$ over $k_{\infty}$, and $\Sha(E/k_{\infty})(p)$ denotes the $p$-primary part of the Tate-Shafarevich
group of $E$ over $k_{\infty}$. 

\begin{proposition}[Proposition \ref{tsrank}] Assume that $p$ is odd, and moreover that $p$ does not divide the class number 
of $k$ if the root number $\epsilon(E/k, 1)$ equals $-1$. If $E$ has good ordinary reduction at $p$ with $\mu_E(k)=0$, then
\begin{align*}\operatorname{corank}_{\Lambda(H)}\Sha(E/k_{\infty})(p) = \begin{cases}\lambda_E(k) &\text{if $\epsilon(E/k, 1)=+1$}\\
\lambda_E(k) -1&\text{if $\epsilon(E/k, 1)=-1.$}\end{cases} \end{align*} \end{proposition}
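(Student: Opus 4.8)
The plan is to dualize the displayed descent sequence, read off $\Lambda(H)$-coranks term by term using the structure theorem of Theorem \ref{mhg}, and then identify the Mordell-Weil contribution with a generic rank in the anticyclotomic direction, which the known cases of Mazur's rank-growth conjecture pin down to be $0$ or $1$ according to the sign $\epsilon(E/k,1)$.

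First I would apply Pontryagin duality to the descent sequence to get a short exact sequence of finitely generated $\Lambda(H)$-modules
\begin{align*} 0 \longrightarrow \Sha(E/k_{\infty})(p)^{\vee} \longrightarrow X(E/k_{\infty}) \longrightarrow \big( E(k_{\infty}) \otimes {\bf{Q}}_p/{\bf{Z}}_p \big)^{\vee} \longrightarrow 0 . \end{align*}
Finite generation over $\Lambda(H)$ for all three terms follows once it is known for the middle one, which is exactly Theorem \ref{mhg} under the standing hypothesis $\mu_E(k)=0$: it gives $X(E/k_{\infty}) \cong \Lambda(H)^{\lambda_E(k)}$. Since $\Lambda(H)$-rank is additive on short exact sequences of finitely generated $\Lambda(H)$-modules, dualizing turns this into additivity of $\Lambda(H)$-corank on the original sequence, whence
\begin{align*} \operatorname{corank}_{\Lambda(H)} \Sha(E/k_{\infty})(p) = \lambda_E(k) - \operatorname{corank}_{\Lambda(H)}\big( E(k_{\infty}) \otimes {\bf{Q}}_p/{\bf{Z}}_p \big) . \end{align*}
It remains to show that the $\Lambda(H)$-corank of the Mordell-Weil term is $0$ if $\epsilon(E/k,1)=+1$ and $1$ if $\epsilon(E/k,1)=-1$.

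To compute that corank, recall that $H \cong \Omega$ and that the layers of $k_{\infty}/k^{\cyc}$ are the fields $k^{\cyc}D_n$, with $D_n$ the $n$-th anticyclotomic layer; hence the corank in question equals $\lim_n p^{-n}\operatorname{corank}_{{\bf{Z}}_p}\big( E(k^{\cyc}D_n) \otimes {\bf{Q}}_p/{\bf{Z}}_p \big)$, the generic ${\bf{Z}}_p$-rank of $E$ in the anticyclotomic direction over $k^{\cyc}$. A control and comparison argument---the technical heart of the proof, which requires $p$ odd and, when $\epsilon(E/k,1)=-1$, that $p$ not divide the class number of $k$, and which rests on the finiteness of the cyclotomic Iwasawa invariants $\mu_E(k),\lambda_E(k)$ (Kato together with Rohrlich, as recalled above) to bound the rank contributed in the cyclotomic direction---identifies this generic rank with that of $E$ along $D_{\infty}/k$, namely with $\operatorname{corank}_{\Lambda(\Omega)}\big( E(D_{\infty}) \otimes {\bf{Q}}_p/{\bf{Z}}_p \big)$. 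I would then invoke the known cases of Mazur's conjecture for the anticyclotomic tower: when $\epsilon(E/k,1)=-1$, the nonvanishing of Heegner points along $D_{\infty}/k$ (Cornut-Vatsal) forces this corank to be $\geq 1$, while Kolyvagin's Euler system bound, Howard's Heegner-module results, and the anticyclotomic main-conjecture divisibilities (Bertolini-Darmon and refinements) force it to be $\leq 1$; when $\epsilon(E/k,1)=+1$, the nonvanishing of the anticyclotomic $p$-adic $L$-function (Vatsal, Cornut-Vatsal) together with the divisibility in the anticyclotomic main conjecture forces $X(E/D_{\infty})$ to be $\Lambda(\Omega)$-torsion, so the corank is $0$. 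Substituting these values into the displayed identity gives the asserted formula.

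The main obstacle is precisely this comparison step: showing that only bounded rank is contributed in the cyclotomic direction, so that the two-variable Mordell-Weil corank over $\Lambda(H)$ coincides with the one-variable anticyclotomic one---equivalently, that $\Gamma$ acts on $\big( E(k_{\infty}) \otimes {\bf{Q}}_p/{\bf{Z}}_p \big)^{\vee}$ trivially modulo $\Lambda(H)$-torsion. It is here that Kato's theorem (over the intermediate fields, or via restriction combined with Rohrlich's nonvanishing) and the class-number hypothesis enter, and the resulting dichotomy between the two cases is ultimately a reflection of the functional equation of the two-variable $p$-adic $L$-function of Corollary \ref{FE}, whose restriction to the anticyclotomic line vanishes identically exactly when $\epsilon(E/k,1)=-1$. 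Everything else---the dualization, the additivity of corank, and the appeal to Theorem \ref{mhg}---is formal.
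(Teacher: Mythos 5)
Your reduction matches the paper's opening move exactly: dualize the descent sequence, use additivity of $\Lambda(H)$-corank, and invoke Theorem~\ref{mhg} to arrive at
$\operatorname{corank}_{\Lambda(H)}\Sha(E/k_\infty)(p) = \lambda_E(k) - \operatorname{corank}_{\Lambda(H)}\left(E(k_\infty)\otimes{\bf Q}_p/{\bf Z}_p\right)$.
The genuine gap lies in the Mordell--Weil corank computation. You propose to equate $\operatorname{corank}_{\Lambda(H)}\left(E(k_\infty)\otimes{\bf Q}_p/{\bf Z}_p\right)$ with $\operatorname{corank}_{\Lambda(\Omega)}\left(E(D_\infty)\otimes{\bf Q}_p/{\bf Z}_p\right)$ via a ``control and comparison argument'' that you call the technical heart, but you never supply it: it is not clear that the ${\bf Z}_p$-coranks of $E$ over the infinite layers $D_n k^{\cyc}$ in your limit are even finite, nor how Kato--Rohrlich (which controls the Selmer corank cyclotomically over a fixed abelian base) actually forces $\Gamma$ to act trivially on $\mathcal{E}(E/k_\infty)$ modulo $\Lambda(H)$-torsion, nor where the class number hypothesis enters that argument. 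This is the missing idea, not a routine technicality.

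The paper avoids the comparison entirely by computing the $\Lambda(\Omega_K)$-rank of $\mathcal{E}(E/D_\infty^K)$ for \emph{every} finite extension $K$ of $k$ contained in $k^{\cyc}$, not just $K=k$. Each such $K$ is CM over its maximal totally real subfield $F$, the root number $\epsilon(E/K,1)=\epsilon(E/k,1)$ is preserved under basechange (Artin formalism), and over each $K$ the constant answer ($0$ or $1$) follows from Cornut--Vatsal \cite[Theorems 1.4, 1.5]{CV} over $F$ combined with Nekovar's rank theorem \cite{Nek} when the sign is $+1$, or Howard's rank theorem \cite[Theorem B(a)]{Ho1} when the sign is $-1$. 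The class number hypothesis in the $-1$ case is used solely to invoke Howard at each level $K$, via Iwasawa's theorem \cite{Iw} that $p\nmid h_k$ implies $p\nmid h_K$. Passing to the inductive limit over $K$ then gives the $\Lambda(H)$-rank directly, with no separate bounded-cyclotomic-rank argument. Your alternative citations (Kolyvagin, Bertolini--Darmon, main-conjecture divisibilities) are both heavier than what is needed and orthogonal to the unclosed comparison step.
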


\begin{example} Consider the elliptic curve $E=53a: y^2 +xy +y = x^3 -x^2$ at $p=5$ over $k={\bf{Q}}(\sqrt{-31})$.
The discriminant of $k$ is $-31$, which is prime to both $5$ and the conductor $53$ of $E$. A simple calculation
shows that the root number $\epsilon(E/k, 1)$ is $+1$. Moreover, the mod $5$ Galois representation associated to $E$ is 
surjective, as shown by the calculations in Serre \cite[$\S$ 5.4]{Se}. Computations of Pollack \cite{Po} show that $\mu_E(k) =0$ with 
$\lambda_E(k)=9$ (and moreover that the Mordell-Weil rank of $E(k)$ is $1$), from which we deduce that $\Sha(E/k_{\infty})(5)$ 
has $\Lambda(H)$-corank $9$. In particular, $\Sha(E/k_{\infty})(5)$ contains infinitely many copies of ${\bf{Q}}_5 / {\bf{Z}}_5$.\end{example}

Finally, we establish the following criterion for one divisibility of (iii) in terms of specializations to cyclotomic characters, following a 
suggestion of Ralph Greenberg. To be more precise, let $\Psi$ denote the set of finite order characters of the Galois group 
$\Gamma = \Gal(k^{\cyc}/k)$. Given a character $\psi \in \Psi$, let us write $\mathcal{O}_{\psi}$ to denote the ring 
obtained from adjoining to ${\bf{Z}}_p$ the values of $\psi$. Let $L_p(E/k_{\infty})\vert_{\Omega}$ denote the image
of the two-variable $p$-adic $L$-function $L_p(E/k_{\infty})$ in the Iwasawa algebra $\Lambda(\Omega)$.

\begin{theorem}[Corollary \ref{gpw}] Assume that $p$ does not divide $L_p(E/k_{\infty})\vert_{\Omega}$,
and that for each character $\psi \in \Psi$, we have the inclusion of ideals \begin{align}\label{HDiv}
\left( \psi \left( L_p(E/k_{\infty}) \right) \right) \subseteq \left( \psi \left( \operatorname{char}_{\Lambda(G)}X(E/k_{\infty}) \right) \right) \text{ in } \mathcal{O}_{\psi}[[G]].
\end{align} Then, we have the inclusion of ideals \begin{align}\label{2VDiv}
\left( L_p(E/k_{\infty}) \right) \subseteq \left( \operatorname{char}_{\Lambda(G)}X(E/k_{\infty}) \right) \text{ in } \Lambda(G).
\end{align}

\end{theorem}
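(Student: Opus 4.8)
The plan is to reduce the claimed two-variable divisibility to a divisibility between distinguished polynomials over $\Lambda(\Gamma)$, using Weierstrass preparation in the anticyclotomic variable; the point is that the hypothesis $p \nmid L_p(E/k_{\infty})\vert_{\Omega}$ is exactly the Weierstrass condition needed to carry this out. Write $\Lambda(G) = \Lambda(\Gamma)[[T]]$, where $T = h - 1$ for a topological generator $h$ of $H \cong \Omega$, and put $R = \Lambda(\Gamma) \cong {\bf{Z}}_p[[S]]$ (with $S = \gamma - 1$, $\gamma$ a topological generator of $\Gamma$); thus $R$ is a complete regular local ring with maximal ideal $\mathfrak{m}_R = (p, S)$, and a character $\psi \in \Psi$ with $\psi(\gamma) = \zeta_\psi$ a $p$-power root of unity induces the specialization $\Lambda(G) = R[[T]] \longrightarrow \mathcal{O}_\psi[[T]]$ coming from $R \to \mathcal{O}_\psi$, $S \mapsto \zeta_\psi - 1$; the trivial character gives $S \mapsto 0$, which is the projection $\Lambda(G) \to \Lambda(\Omega)$ appearing in the statement. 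Abbreviate $L_p = L_p(E/k_{\infty})$ and $C = \operatorname{char}_{\Lambda(G)}X(E/k_{\infty})$, and expand $L_p = \sum_i \ell_i(S) T^i$ with $\ell_i \in R$.

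First I would unwind the hypothesis: $p \nmid L_p\vert_{\Omega} = \sum_i \ell_i(0) T^i$ means there is a least index $n$ with $\ell_n(0) \in {\bf{Z}}_p^\times$, while $\ell_0(0), \dots, \ell_{n-1}(0) \in p{\bf{Z}}_p$; equivalently $\ell_n \in R^\times$ and $\ell_0, \dots, \ell_{n-1} \in \mathfrak{m}_R$. Weierstrass preparation over the complete local ring $R$ then yields $L_p = u \cdot P$ with $u \in \Lambda(G)^\times$ and $P \in R[T]$ a distinguished polynomial of degree $n$. Taking $\psi$ trivial in (\ref{HDiv}) gives $C\vert_\Omega \mid L_p\vert_\Omega$ in $\Lambda(\Omega)$, whence $p \nmid C\vert_\Omega$ (otherwise $p \mid L_p\vert_\Omega$), so the same argument produces $C = u' \cdot P'$ with $u' \in \Lambda(G)^\times$ and $P' \in R[T]$ distinguished. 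Since $u, u'$ are units, (\ref{2VDiv}) is equivalent to $P' \mid P$; and since $P'$ is monic we may divide, $P = Q P' + \rho$ with $Q, \rho \in R[T]$ and $\deg_T \rho < \deg_T P'$. The goal is now to show $\rho = 0$.

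Fix $\psi \in \Psi$ and apply $S \mapsto \zeta_\psi - 1$. Since $\zeta_\psi - 1$ lies in the maximal ideal of $\mathcal{O}_\psi$, this map carries $\mathfrak{m}_R$ into $\mathfrak{m}_{\mathcal{O}_\psi}$, hence units to units and distinguished polynomials to monic polynomials of the same degree; in particular $\psi(u), \psi(u') \in \mathcal{O}_\psi[[T]]^\times$, so (\ref{HDiv}) becomes $\psi(P') \mid \psi(P)$ in $\mathcal{O}_\psi[[T]]$. On the other hand $\psi(P) = \psi(Q)\,\psi(P') + \psi(\rho)$ is the division of $\psi(P)$ by the monic polynomial $\psi(P')$ in $\mathcal{O}_\psi[T]$. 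Here I invoke the elementary fact that for a monic polynomial $F$ over any commutative ring $A$ the natural map $A[T]/(F) \to A[[T]]/(F)$ is an isomorphism — immediate from existence and uniqueness of division by $F$ — so that divisibility by $\psi(P')$ in $\mathcal{O}_\psi[[T]]$ is detected by vanishing of the polynomial remainder; hence $\psi(\rho) = 0$, i.e.\ $r(\zeta_\psi - 1) = 0$ for every coefficient $r \in R$ of $\rho$. As $\psi$ runs over $\Psi$, the elements $\zeta_\psi - 1$ form an infinite subset of the maximal ideal of $\overline{{\bf{Z}}}_p$; but a nonzero element of $R = {\bf{Z}}_p[[S]]$ is, up to a power of $p$ and a unit, a distinguished polynomial of finite degree, hence has only finitely many zeros there. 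Therefore each coefficient of $\rho$ vanishes, so $\rho = 0$, $P' \mid P$, and (\ref{2VDiv}) follows.

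The crux — essentially the only idea — is recognizing that $p \nmid L_p\vert_\Omega$, together with the trivial-character case of (\ref{HDiv}), promotes both $L_p$ and $C$ to Weierstrass form over $\Lambda(\Gamma)$; without this the argument would have no grip on the cyclotomic direction. The one point I expect to require genuine care is the remark that divisibility by a monic polynomial, unlike arbitrary divisibility in $\Lambda(G)$, is preserved by and detected after specialization, because it is recorded by the finite polynomial remainder; everything else is bookkeeping with the decomposition $\Lambda(G) = \Lambda(\Gamma)[[H]]$ and the standard identity principle for power series over ${\bf{Z}}_p$.
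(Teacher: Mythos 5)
Your proof is correct and takes essentially the same route as the paper's Theorem \ref{greenberg}: apply Weierstrass preparation in the anticyclotomic variable over $\Lambda(\Gamma)$, perform division with remainder by the resulting distinguished polynomial, observe that each cyclotomic specialization $\psi$ kills the remainder, and invoke the identity principle (a nonzero element of ${\bf{Z}}_p[[S]]$ has finitely many zeros) to conclude the remainder vanishes identically. Your extra initial step — deducing $p \nmid \operatorname{char}\vert_\Omega$ from $p \nmid L_p\vert_\Omega$ via the trivial-character case of \eqref{HDiv} — cleanly bridges the hypothesis as stated here to the condition ($p \nmid g(T_1,0)$) actually used in the paper's Theorem \ref{greenberg}, and is a nice tidying. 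One small caveat: the ``elementary fact'' that $A[T]/(F) \to A[[T]]/(F)$ is an isomorphism for \emph{any} monic $F$ over \emph{any} commutative ring $A$ is false (e.g.\ $A={\bf{Z}}$, $F=T-2$ gives ${\bf{Z}} \hookrightarrow {\bf{Z}}_2$, not an isomorphism); what you actually need, and what you actually have, is Weierstrass division for the \emph{distinguished} polynomial $\psi(P')$ over the complete local ring $\mathcal{O}_\psi$, which holds because the specialization $S \mapsto \zeta_\psi - 1$ maps $\mathfrak{m}_R$ into $\mathfrak{m}_{\mathcal{O}_\psi}$ and hence carries distinguished polynomials to distinguished (not merely monic) polynomials.
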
  We deduce from this the following result. Let $K$ be any finite extension of $k$ contained in the cyclotomic ${\bf{Z}}_p$-extension $k^{\cyc}$.
Let $\Omega_K$ denote the Galois group $\Gal(KD_{\infty}/k)$, which is topologically isomorphic to ${\bf{Z}}_p$. Let $L_p(E/k_{\infty})\vert_{\Omega_K} $ denote
the image of the two-variable $p$ adic $L$-function $L_p(E/k_{\infty})$ in the Iwasawa algebra $\Lambda(\Omega_K)$. Let $\Psi_K$ denote the set of
characters of order $[K:k]$ of the Galois group $\Gal(K/k)$. Let us consider as well the following condition(s), so that we can invoke the recent work of Pollack-Weston \cite{PW}.

\begin{hypothesis}\label{XXX} Let $\epsilon(E/k, 1) \in \lbrace \pm 1 \rbrace$ denote the root number of the complex $L$-function 
$L(E/k, s) = L(f \times \theta_k, s)$. We assume that:

\begin{itemize}

\item[(i)] The mod $p$ Galois representation $\overline{\rho}_E$ associated to $E$  is surjective.

\item[(ii)] If $\epsilon(E/k, 1) = +1$, then $p \geq 5$ and the conductor $N$ is prime to the discriminant of $k$. 
This latter condition determines an integer factorization $N = N^+ N^{-}$ of $N$, where $N^+$ is divisible only by primes that 
split in $k$, and $N^{-}$ is divisible only by primes that remain inert in $k$; we then assume that $N^{-}$ is the squarefree product of an
odd number of primes. 

\end{itemize}\end{hypothesis} We obtain the following main result.
\begin{proposition}[Proposition \ref{bccrit}] 

Assume that the root number $ \epsilon(E/k, 1)$  of $L(E/k, 1)$ is $+1$.
Assume additionally that for a finite extension $K$ of $k$ contained in the cyclotomic ${\bf{Z}}_p$-extension $k^{\cyc}$, 
we have the inclusion of ideals \begin{align}\label{BCDiv} \left( L_p(E/k_{\infty})\vert_{\Omega_K} \right) \subseteq 
\left( \operatorname{char}_{\Lambda(\Omega_K)}X(E/KD_{\infty}) \right) \text{ in } \Lambda(\Omega_K),
\end{align}  with equality for $K=k$. Then, there exists a nontrivial character $\psi \in \Psi_K$ such that the specialization divisibility 
$(\ref{HDiv})$ holds. In particular, if Hypothesis \ref{XXX} (i) and (ii) hold, then we obtain the inclusion of ideals \begin{align*}
\left( L_p(E/k_{\infty}) \right) \subseteq \left( \operatorname{char}_{\Lambda(G)}X(E/k_{\infty}) \right) \text{ in } \Lambda(G).
\end{align*}\end{proposition}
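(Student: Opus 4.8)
The plan is to express the hypothesized basechange divisibility (\ref{BCDiv}) as a divisibility of products over the characters of $\Gal(K/k)$, to extract from it a single isotypic divisibility at a primitive $\psi\in\Psi_K$, and then, for the ``in particular'' clause, to feed the resulting family of cyclotomic-twisted divisibilities into Corollary \ref{gpw}.

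First I would set up the isotypic decomposition along the finite cyclotomic layer. Write $\mathcal{O}$ for the ring obtained by adjoining the $p^n$-th roots of unity to ${\bf{Z}}_p$, where $p^n=[K:k]$; the group ring $\mathcal{O}[\Gal(K/k)]$ embeds with finite $p$-power cokernel into $\prod_{\psi'}\mathcal{O}_{\psi'}$, the product over all characters $\psi'$ of $\Gal(K/k)$, so that $\Lambda(\Omega_K)\otimes_{{\bf{Z}}_p}\mathcal{O}$ decomposes, up to $p$-power index, as $\prod_{\psi'}\mathcal{O}_{\psi'}[[\Omega]]$. On the analytic side, the interpolation formula of Theorem \ref{2vplfn} combined with Artin formalism for the Rankin--Selberg $L$-function $L(f\times\Theta(\mathcal{W}),s)$ identifies $L_p(E/k_{\infty})\vert_{\Omega_K}$, after extension of scalars to $\mathcal{O}$ and up to the explicit nonvanishing factors $\eta$ of that theorem, with $\prod_{\psi'}\psi'(L_p(E/k_{\infty}))$. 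On the algebraic side, a Mazur-type control theorem along the finite cyclotomic layer identifies, up to pseudo-null error and hence up to characteristic ideals, the $\psi'$-isotypic quotient of $X(E/KD_{\infty})$ with the $\psi'$-specialization of $X(E/k_{\infty})$, so that $\operatorname{char}_{\Lambda(\Omega_K)}X(E/KD_{\infty})$ factors, after extension of scalars and up to units and powers of $p$, as $\prod_{\psi'}\psi'(\operatorname{char}_{\Lambda(G)}X(E/k_{\infty}))$.

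With these factorizations in place, (\ref{BCDiv}) becomes the statement that $\prod_{\psi'}\psi'(\operatorname{char}_{\Lambda(G)}X(E/k_{\infty}))$ divides $\prod_{\psi'}\psi'(L_p(E/k_{\infty}))$ in the regular, hence factorial, Iwasawa algebra $\mathcal{O}[[G]]$, valid up to a power of $p$. The hypothesis of equality at $K=k$ identifies the factors indexed by the trivial character, which may therefore be cancelled; and since both products transform equivariantly under $\Gal({\bf{Q}}_p(\mu_{p^n})/{\bf{Q}}_p)$ and the primitive characters of order $p^n$ form a single orbit, I would deduce the divisibility $\psi'(\operatorname{char}_{\Lambda(G)}X(E/k_{\infty}))\mid\psi'(L_p(E/k_{\infty}))$ for at least one, and hence, after applying the Galois action, for every primitive $\psi'\in\Psi_K$ --- which is exactly (\ref{HDiv}). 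For the ``in particular'' clause, Hypothesis \ref{XXX}(i) forces the anticyclotomic $\mu$-invariant to vanish by the work of Pollack--Weston \cite{PW}, so that $p\nmid L_p(E/k_{\infty})\vert_{\Omega}$, while Hypothesis \ref{XXX}(i)--(ii), via \cite{PW} applied over each finite layer $K\subset k^{\cyc}$ (or deduced from the base case by a restriction argument, as for Kato's theorem in the cyclotomic setting above), supplies the definite anticyclotomic main conjecture over $k$ together with the divisibility (\ref{BCDiv}) at every such $K$. Running the foregoing at each level then yields (\ref{HDiv}) for every $\psi\in\Psi=\bigcup_K\Psi_K$, and Corollary \ref{gpw} delivers (\ref{2VDiv}).

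The step I expect to be the main obstacle is the passage from a divisibility of products over a Galois orbit to a divisibility of a single isotypic factor: this does not follow from equivariance alone, and I anticipate needing more than the bare containment (\ref{BCDiv}) --- either that in the definite case it is in fact an equality, together with an independently known one-sided divisibility at each finite level of Bertolini--Darmon / Howard Euler-system type, or an induction on $n$ that uses the equalities at the lower layers to strip off all non-primitive factors before the orbit argument. Making this precise, along with tracking the period and Euler-factor discrepancies in the Artin-formalism identification, the pseudo-null error terms in the cyclotomic control theorem, and the power-of-$p$ ambiguity inherent in the isotypic decomposition --- which is exactly what the hypothesis $p\nmid L_p(E/k_{\infty})\vert_{\Omega}$ is present to absorb in Corollary \ref{gpw} --- is the crux.
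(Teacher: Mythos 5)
Your proposal follows the same overall route as the paper: factor $L_p(E/k_{\infty})\vert_{\Omega_K}$ and $\operatorname{char}_{\Lambda(\Omega_K)}X(E/KD_{\infty})$ over the characters of $\Gal(K/k)$ via Artin formalism on the analytic side and a control theorem on the algebraic side, cancel the trivial-character factor using the assumed equality at $K=k$, try to pass from the resulting product divisibility to a single-character divisibility by exploiting Galois conjugacy, and then feed the family of specialization divisibilities into Corollary \ref{gpw} (with Pollack--Weston supplying the anticyclotomic $\mu=0$ input). Where you differ is only in bookkeeping: you set up the isotypic decomposition for all characters of $\Gal(K/k)$ at once, whereas the paper works directly with products over the primitive sets $\Psi_K$ and divides out the $K=k$ layer.

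You have also put your finger on the genuine gap, and your instinct is right that it is not closed by equivariance alone. The step from $\prod_{\psi}g(T_1,\psi(T_2))\mid\prod_{\psi}L(T_1,\psi(T_2))$ to $g(T_1,\psi_0(T_2))\mid L(T_1,\psi_0(T_2))$ for some (hence by Galois conjugacy all) primitive $\psi_0$ is false in general: with $p=3$ and $[K:k]=3$, take $g(T_1,T_2)=T_1-(1+T_2)$ and $L(T_1,T_2)=T_1-(1+T_2)^2$. Then $g(T_1,0)=L(T_1,0)=T_1-1$, both families of primitive specializations are Galois conjugate, and $\prod_{\psi}g(T_1,\psi(T_2))=\prod_{\psi}L(T_1,\psi(T_2))=T_1^3-1$, yet $g(T_1,\zeta_3-1)=T_1-\zeta_3$ does not divide $L(T_1,\zeta_3-1)=T_1-\zeta_3^2$. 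The paper's own attempt to justify this step --- writing a Weierstrass-type division $g(T_1,T_2^K)=f\cdot L(T_1,T_2^K)+r$ and asserting that the product $\prod_{\psi\neq1}r(T_1,\psi(T_2))$ vanishes --- does not hold up, since the remainder of a product is not the product of remainders; and the Galois-conjugacy observation it then invokes only propagates a divisibility once one is known, it does not produce the first one. So some further arithmetic input is required exactly as you anticipate: either an equality (not just a divisibility) of characteristic ideals at each finite layer together with an independent one-sided divisibility for each isotypic factor of Euler-system type, or a genuinely new argument. Cancelling lower-level factors inductively does not by itself repair matters either, since even equality of the products over a single Galois orbit leaves the counterexample above intact.
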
 Though we do not discuss the issue here, the equality condition for $k=K$ would follow from the
nonvanishing criterion of Howard \cite[Theorem 3.2.3 (c)]{Ho} for dihedral/anticyclotomic $p$-adic $L$-functions, as explained 
in \cite[$\S 5$]{VO}. Hence, by Proposition \ref{bccrit}, this criterion would also imply one divisibility
of the two-variable main conjecture in the setting where the root number $\epsilon(E/k,1)$ is $1$.

\begin{remark}[Acknowledgements.] It is a pleasure to thank John Coates, Ralph Greenberg, David Loeffler, Robert Pollack, Christopher 
Skinner and Christian Wuthrich for various helpful discussions. In particular, it is a pleasure to thank Christopher Skinner for informing me
of the three-variable main conjecture proved in \cite{SU}, which I had not been aware of before writing this work. It is also a pleasure to thank 
the anonymous referee for various helpful comments that have done much to improve the exposition, as well as the correctness of some 
of the writing. \end{remark}

\tableofcontents

\section{Two-variable $p$-adic $L$-functions} 

We start with the proof of Theorem \ref{2vinterpolation}, following closely the constructions
of Hida \cite{Hi} and Perrin-Riou \cite{PR0}. Both of the these constructions depend in an 
essential way on the bounded linear form defined in \cite{Hi}, which we review below.

\begin{remark}
The results described below hold more generally for $f$ any $p$-ordinary eigenform of weight
$l \geq 2$ and nontrivial Nebentypus, following the same methods described below 
with \cite[Th\'eor\`eme B]{PR0}. We have restricted to the setting of eigenforms 
associated to modular elliptic curves for simplicity of exposition.\end{remark}  

\begin{remark}[Hida's bounded linear form.]

We follow Hida \cite[$\S 4$]{Hi}, using the same notations for spaces 
of modular forms and Hecke algebras used there. Suppose we have a modular form
\begin{align*} f(z) = \sum_{n\geq0} a_n(f) e^{2\pi i nz} \in M_l(\Gamma_{*}(M),
\xi; L_0), \end{align*} with $l$ and $M$ positive integers, $* = 0$ or $1$,
$\xi$ a Dirichlet character mod $M$, and $L_0 =
{\bf{Q}}(a_n(f))_{n\geq0}$ the extension of ${\bf{Q}}$ generated by
the Fourier coefficients of $f$. We define a norm
$\vert \cdot \vert_p$ on $f \in M_l(\Gamma_{*}(M), \xi; L_0)$ by
letting \begin{align*} \vert f \vert_p = \sup_n \left| a_n(f)
\right|_p. \end{align*} Let $L$ denote the closure of $L_0$ in
$\overline{\bf{Q}}_p$ with respect to a fixed embedding
$\overline{\bf{Q}} \rightarrow \overline{\bf{Q}}_p$. Let
$M_l(\Gamma_{*}(M), \xi; L)$ denote the completion of the space
$M_l(\Gamma_{*}(M), \xi; L_0)$ with respect to $\vert \cdot
\vert_p$. Let $\mathcal{O} = \mathcal{O}_L$. Define a subspace of
{\it{integral forms}} \begin{align*} M_l(\Gamma_{*}(M), \xi; \mathcal{O}) = \lbrace f \in
M_l(\Gamma_{*}(M), \xi; L): \vert f \vert_p \leq 1\rbrace. \end{align*} Let us write
${\bf{T}}(M, \xi; L)$ to denote the algebra of Hecke operators acting on
$M_l(\Gamma_{*}(M), \xi; L),$ as defined in \cite[p. 171]{Hi}. Hence, ${\bf{T}}(M, \xi; L)$
denotes the $L$-subalgebra of the ring of all $L$-linear endomorphisms of $M_l(\Gamma_{*}(M), \xi; L)$
generated by Hecke operators. If given integers $n \geq m \geq 0$, then the
restriction ${\bf{T}}(Mp^n, \xi; \mathcal{O})$ of ${\bf{T}}(Mp^n, \xi; L)$ to
$M_l(\Gamma_{*}(Mp^m), \xi; \mathcal{O})$ defines an
$\mathcal{O}$-algebra homomorphism
\begin{align*} M_l(\Gamma_{*}(Mp^n), \xi; \mathcal{O}) \longrightarrow
M_l(\Gamma_{*}(Mp^m), \xi; \mathcal{O}). \end{align*} We define the
{\it{extended Hecke algebra}} by passage to the inverse limit with
respect to these homomorphisms, \begin{align*} {\bf{T}}(M, \xi; \mathcal{O}) = \ilim n {\bf{T}}(Mp^n, \xi,
\mathcal{O}). \end{align*} Let us now fix a $p$-ordinary eigenform 
\begin{align*}f(z) = \sum_{n\geq1} a_n(f) e^{2\pi i nz} \in S_2(\Gamma_0(N)) \end{align*}
of weight $2$, level $N$, and trivial Nebentypus. Let $\Psi$ denote the principal or trivial character
modulo $N$ (hence $\psi(p) =1$ if $p$ does not divide $p$, and $\psi(p)=0$ otherwise). Let $\alpha_{p}(f)$ denote 
the $p$-adic unit root of the polynomial \begin{align*} x^2 - a_p(f)x + p\psi(p), \end{align*} and $\beta_p(f)$
the non-unit root. Let $f_0$ denote $p$-stabilization of $f$, which is the unique ordinary form associated to $f$ by 
Hida \cite[Lemma 3.3]{Hi}. That is, let  \begin{align*} f_0(z) = \begin{cases} f(z) &\text{if $p \mid N$}\\
f(z) - \beta_p(f) f(pz) &\text{if $p \nmid N.$}  \end{cases}\end{align*} This eigenform $f_0$ has 
level $N_0$, where \begin{align*} N_0 = \begin{cases} Np
&\text{if $p \nmid N$}\\ N &\text{if $p \mid N.$}\end{cases}\end{align*}
Its Fourier coefficients $a_n(f_0)$ satisfy the relations \begin{align*}a_n(f_0) 
= \begin{cases} a_n(f) &\text{if $(n,p)=1$}\\ \alpha_p(f) &\text{if
$n=p.$} \end{cases} \end{align*}  We now recall briefly the definition of idempotent operators 
in extended Hecke algebras, following \cite[pp. 171 - 172]{Hi}.
That is, let ${\bf{T}}(Np^m) = {\bf{T}}(\Gamma_0(Np^m);
\mathcal{O})$ denote the $\mathcal{O}$-algebra generated by Hecke operators 
acting on the space of cusp forms $S_2(\Gamma_0(Np^m);\mathcal{O})$,
with $T_p = T_p(Np^m)$ denoting the Hecke operator at $p$. 
Let $\overline{T}_{p}$ denote the image of $T_p$ in the quotient
${\bf{T}}(Np^m)/p$. This reduction $\overline{T}_p$ can be decomposed
uniquely into semisimple and nilpotent parts. Since
${\bf{T}}(Np^m)/p$ is a finitely-generated, commutative
${\bf{F}}_p$-algebra, it follows that $\overline{T}_{p}^{~p^r}$ is
semisimple for $r$ sufficiently large. Hence, $\overline{T}_{p}^{~
up^r}$ is idempotent for some integer $u$. Let $e_m$ denote the
unique lift to ${\bf{T}}(Np^m)$ of $\overline{T}_{p}^{~ up^r}$. Note that this lift
does not depend on the choice of integer $u$. 

\begin{definition}The {\it{idempotent}}
${\bf{e}}$ in the extended Hecke algebra ${\bf{T}}(N) = \ilim m
{\bf{T}}(Np^m)$ is defined to be the projective limit $ 
{\bf{e}} = \ilim m e_m. $\end{definition}

\begin{proposition}\emph{(Hida)} Let 
$f \in S_2(\Gamma_0(N))$ be a $p$-ordinary eigenform, with 
$f_0$ its associated ordinary form. There is a decomposition ${\bf{T}}(N;L) \cong
A \oplus L$ induced by the split exact
sequence \begin{align}\label{hida4.4}\begin{CD}0 @>>> A \oplus L
@>>> {\bf{T}}(N;L) @>{\phi(f_0)}>> {\bf{T}}^{(0)}(N;L) @>>>0.
\end{CD} \end{align} Here, $\phi(f_0)$ is the map
that sends $T_n \longmapsto a_n(f_0)$, with ${\bf{T}}^{(0)}(N;L) \cong L$ the
direct summand of ${\bf{T}}(N;L)$ through which this map factors, and $A$
the complementary direct summand. \end{proposition}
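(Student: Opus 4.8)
The plan is to realize $\phi(f_0)$ as the projection of ${\bf{T}}(N;L)$ onto one of its local direct factors, that factor being $L$ itself; the complementary summand then serves as $A$. First I would check that $T_n \longmapsto a_n(f_0)$ is a well-defined $L$-algebra homomorphism ${\bf{T}}(N;L) \to L$: it respects the Hecke relations because $f_0$ is an eigenform, and its values lie in $L$ since $a_n(f_0) = a_n(f) \in L_0$ when $(n,p) = 1$, while $a_p(f_0) = \alpha_p(f)$ lies in $L$ because $p$-ordinarity makes the reduction of $x^2 - a_p(f) x + p\Psi(p)$ modulo the maximal ideal of $\mathcal{O}_L$ a product of two coprime linear factors — one with unit root, one with root $\equiv 0$ — so that Hensel's lemma splits it over $\mathcal{O}_L$. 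Since $\phi(f_0)(T_1) = 1$, the image ${\bf{T}}^{(0)}(N;L) = \phi(f_0)({\bf{T}}(N;L))$ is all of $L$, and the content of the proposition becomes the direct-sum decomposition ${\bf{T}}(N;L) \cong A \oplus L$ with the second summand carried isomorphically onto ${\bf{T}}^{(0)}(N;L) = L$ by $\phi(f_0)$; equivalently, there is an idempotent $e_{f_0} \in {\bf{T}}(N;L)$ with $e_{f_0}\,{\bf{T}}(N;L) \cong L$ via $\phi(f_0)$ and $A = (1 - e_{f_0})\,{\bf{T}}(N;L) = \Ker\phi(f_0)$.

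Next I would reduce this to a finiteness statement. The input is Hida's control theorem: the ordinary idempotent ${\bf{e}}$ identifies the ordinary parts of ${\bf{T}}(Np^m;\mathcal{O})$ for all $m \geq 1$, so the extended Hecke algebra ${\bf{T}}(N;\mathcal{O})$ equals the finitely generated $\mathcal{O}$-module ${\bf{e}}\,{\bf{T}}(Np;\mathcal{O})$, whence ${\bf{T}}(N;L) = {\bf{T}}(N;\mathcal{O}) \otimes_{\mathcal{O}} L$ is finite-dimensional over $L$ (see \cite[$\S$3--4]{Hi}). A finite algebra over a field is Artinian, hence decomposes as a finite product $\prod_i R_i$ of local Artinian $L$-algebras; the maximal ideal $\Ker\phi(f_0)$ is the maximal ideal of exactly one factor, say $R_1$. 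Its identity element, viewed in ${\bf{T}}(N;L)$, is an idempotent $e_{f_0}$ with $e_{f_0}\,{\bf{T}}(N;L) = R_1$ and $(1 - e_{f_0})\,{\bf{T}}(N;L) = \prod_{i \neq 1} R_i$, so that ${\bf{T}}(N;L) \cong (\prod_{i \neq 1} R_i) \oplus R_1$ with $\phi(f_0)$ factoring through projection onto $R_1$.

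Finally I would identify $R_1$ with $L$, equivalently show that the local factor at the eigensystem of $f_0$ carries no nilpotents; the claimed decomposition, with second summand exactly $L$, hinges on this, and I expect it to be the main obstacle, being the one step that goes beyond finiteness and the formal structure of ${\bf{T}}(N;L)$. The residue field of $R_1$ is $L$, as all $a_n(f_0)$ lie in $L$, so it suffices to know that the $\phi(f_0)$-generalized eigenspace inside the relevant space of ordinary weight-$2$ cusp forms, computed over $L$, is one-dimensional — not merely that the honest eigenspace is. For a cuspidal newform $f$ this follows from the Atkin-Lehner-Li theory of newforms together with the observation that $U_p$ acts on the two-dimensional $p$-old subspace spanned by $f(z)$ and $f(pz)$ with the two eigenvalues $\alpha_p(f)$ and $\beta_p(f)$, which are distinct since one is a $p$-adic unit and the other is not; hence no Jordan block can obstruct reducedness, and when $p \mid N$ there is nothing to $p$-stabilize and the assertion is immediate from multiplicity one. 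Since ${\bf{T}}(N;L)$ acts faithfully on this space of cusp forms, $R_1$ then acts faithfully on a one-dimensional $L$-vector space, forcing $R_1 = L$. With $R_1 \cong L \cong {\bf{T}}^{(0)}(N;L)$ and $A = \prod_{i \neq 1} R_i$ the complementary summand, the split exact sequence $(\ref{hida4.4})$ follows; the details suppressed here — the precise control theorem and the multiplicity-one computation — are those carried out in \cite[$\S 4$]{Hi}.
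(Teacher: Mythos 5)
Your reconstruction follows the approach the paper cites from Hida's Proposition 4.4: $\phi(f_0)$ is a well-defined $L$-algebra homomorphism, the relevant Hecke algebra is Artinian over $L$ and hence a product of local factors, and the local factor at $\Ker\phi(f_0)$ is reduced (equal to $L$) by multiplicity one together with the separation of $\alpha_p(f)$ from $\beta_p(f)$ by $p$-ordinarity. The paper gives no argument of its own — its proof is a citation to \cite[Proposition 4.4 and (4.5)]{Hi} — so what you have written is the substantive content behind that citation.

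One point to tidy: you assert that the full extended Hecke algebra ${\bf{T}}(N;\mathcal{O}) = \varprojlim_m {\bf{T}}(Np^m;\mathcal{O})$ is finitely generated over $\mathcal{O}$ via Hida's control theorem, but control only gives finiteness of the \emph{ordinary part} ${\bf{e}}\,{\bf{T}}(N;\mathcal{O}) \cong {\bf{e}}\,{\bf{T}}(Np;\mathcal{O})$; the non-ordinary part $(1-{\bf{e}})\,{\bf{T}}(N;\mathcal{O})$ is not finite over $\mathcal{O}$. This does not break the argument — write ${\bf{T}}(N;L) = {\bf{e}}\,{\bf{T}}(N;L) \oplus (1-{\bf{e}})\,{\bf{T}}(N;L)$, run your Artinian decomposition inside the finite-dimensional ordinary part to get ${\bf{e}}\,{\bf{T}}(N;L) \cong A_{\mathrm{ord}} \oplus L$, and set $A = (1-{\bf{e}})\,{\bf{T}}(N;L) \oplus A_{\mathrm{ord}}$. (Since $f_0$ is $p$-ordinary, $\Ker\phi(f_0)$ cannot contain $1-{\bf{e}}$, so it indeed sits over the ordinary part.) Similarly, faithfulness of ${\bf{T}}(N;L)$ is on the full space of $p$-adic modular forms rather than cusp forms, but the local factor $R_1$ acts through the generalized eigenspace at a cuspidal eigensystem over $L$, which avoids the Eisenstein part, so your conclusion $R_1 = L$ stands after this slight adjustment in the faithfulness step.
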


\begin{proof}
See \cite[Proposition 4.4 and (4.5)]{Hi}. $\Box$
\end{proof} We now use this result to define the following operator.

\begin{definition} Let  $f \in S_2(\Gamma_0(N))$ be a $p$-ordinary 
eigenform with associated ordinary form $f_0.$
We let ${\bf{1}}_{f_0}$ denote the component of the idempotent
${\bf{e}}$ corresponding to the summand ${\bf{T}}^{(0)}(N)$ in
the spit exact sequence $(\ref{hida4.4})$ above.
\end{definition}

\begin{definition} Let  $f \in S_2(\Gamma_0(N))$ be a $p$-ordinary eigenform 
with associated ordinary form $f_0.$ Let $m \geq 0$ be an integer. {\it{Hida's 
bounded linear form $l_{f_0}$}} of level $Np^m$ is then
given by the map \begin{align*}l_{f_0}: M_2(\Gamma_{*}(Np^m), \xi; L) \longrightarrow L,
~~~ g \longmapsto a_1\left( g\vert_{
{\bf{e}}\circ{\bf{1}}_{f_0}} \right),\end{align*} in other words by the map that sends
a modular form $g \in M_2(\Gamma_{*}(Np^m), \xi; L)$ to the the first
Fourier coefficient of its image under the operation ${\bf{e}}\circ
{\bf{1}}_{f_0}$. \end{definition}

\begin{proposition}\emph{(Hida)}
The linear form $l_{f_0}: M_2(\Gamma_{*}(Np^n), \xi; L)
\longrightarrow L$ is given explicitly on any  $g \in M_2(\Gamma_{*}(Np^m), \xi; L)$
by the map \begin{align*} g \longmapsto \alpha_p(f_0)^{-m} \cdot p \cdot \frac{ \langle h_m, g \rangle_{Np^m}}{ \langle
h ,f_0 \rangle_{N_0}}. \end{align*} Here, $h = \overline{f}_0(z) |_2 \left(\begin{array}
{cc} 0& -1\\ N_0 & 0 \end{array}\right)$ with $\overline{f}_0(z) = \overline{f_0(- \overline{z})},$ and
$h_m(z) = h(p^mz)$.
\end{proposition}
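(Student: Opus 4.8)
The plan is to verify the asserted identity of $L$-linear functionals on $M_2(\Gamma_{*}(Np^m),\xi;L)$ after extending scalars to ${\bf{C}}$: this is legitimate because the ordinary idempotent ${\bf{e}}$ and the component ${\bf{1}}_{f_0}$ of the split sequence $(\ref{hida4.4})$ are cut out purely by Hecke operators, hence are defined over the number field $L_0 = {\bf{Q}}(a_n(f))_{n \geq 1}$, and two $L_0$-linear functionals that agree over ${\bf{C}}$ (via any embedding $\overline{\bf{Q}} \hookrightarrow {\bf{C}}$) agree over $L$. Over ${\bf{C}}$ I would use the orthogonal decomposition of $M_2(\Gamma_{*}(Np^m),\xi;{\bf{C}})$ into generalized eigenspaces for the commuting family $\{ T_n : (n,Np)=1 \}$ together with $U_p = T_p(Np^m)$, and first show that ${\bf{e}}\circ{\bf{1}}_{f_0}$ is the projection onto the line ${\bf{C}}\cdot f_0$. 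The point is that ${\bf{1}}_{f_0}$ annihilates every component whose prime-to-$p$ eigenvalue system differs from that of $f_0$, leaving the $f_0$-isotypic subspace (spanned by $\{ f(p^j z) \}_j$ when $p \nmid N$, and by $\{ f_0(p^j z) \}_j$ when $p \mid N$), on which $U_p$ has a single unit eigenvalue, namely the unit root $\alpha_p(f_0) = \alpha_p(f)$ of $x^2 - a_p(f)x + p\psi(p)$, the remaining eigenvalues being the non-unit root $\beta_p(f)$ and $0$; Hida's projector ${\bf{e}}$, realized on each level as the idempotent $\lim_n U_p^{n!}$, then retains exactly the $\alpha_p(f_0)$-eigenvector, which is precisely $f_0$. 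Consequently $g\vert_{{\bf{e}}\circ{\bf{1}}_{f_0}} = c(g)\, f_0$ with $c(g)$ linear in $g$, and since $a_1(f_0) = 1$ I get $l_{f_0}(g) = a_1(g\vert_{{\bf{e}}\circ{\bf{1}}_{f_0}}) = c(g)$.

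Next I would realize $c(g)$ as a Petersson pairing. The functional $g \mapsto c(g)$ vanishes on the orthogonal complement of the $f_0$-isotypic subspace and, within that subspace, on $\Ker {\bf{e}}$; I would show this combined kernel is exactly the $\langle \cdot, \cdot \rangle_{Np^m}$-orthogonal complement of a single form, which the adjoint calculation identifies with $h_m$. The mechanism is the standard adjointness relations for the Petersson product: $T_n$ with $(n,Np)=1$ is self-adjoint for trivial Nebentypus, while the adjoint of $U_p$ at $p$-power level is $p$ times the conjugate of $U_p$ by the Fricke involution at $p$; pushing these relations through shows that pairing against the Fricke image of $\overline{f}_0$ detects precisely the $\alpha_p(f_0)$-direction inside the $f_0$-isotypic subspace. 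Rewriting the level-$Np^m$ Fricke involution in terms of the level-$N_0$ Fricke involution $|_2 \left(\begin{array}{cc} 0 & -1 \\ N_0 & 0 \end{array}\right)$ and the oldform embedding $z \mapsto p^m z$ then produces the normalized form $h = \overline{f}_0 |_2 \left(\begin{array}{cc} 0 & -1 \\ N_0 & 0 \end{array}\right)$, hence $h_m(z) = h(p^m z)$, together with the scalar $\alpha_p(f_0)^{-m}$ (from stripping off $m$ copies of the $U_p$-eigenvalue) and the factor $p$ (from the determinant/index normalization of the $p$-adic Fricke operator). This gives $c(g) = \kappa \cdot \alpha_p(f_0)^{-m}\, p\, \langle h_m, g \rangle_{Np^m}$ for a constant $\kappa = \kappa(f_0, m)$ independent of $g$.

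Finally I would pin down $\kappa$ by testing on $g = f_0$: on the one hand $c(f_0) = a_1(f_0) = 1$, while on the other a direct change-of-level computation, using $h_m(z) = h(p^m z)$, the covolume index relating $\langle \cdot, \cdot \rangle_{Np^m}$ to $\langle \cdot, \cdot \rangle_{N_0}$, and $U_p f_0 = \alpha_p(f_0) f_0$, evaluates $\langle h_m, f_0 \rangle_{Np^m}$ in terms of $\langle h, f_0 \rangle_{N_0}$ and forces $\kappa = \langle h, f_0 \rangle_{N_0}^{-1}$ (the nonvanishing of $\langle h, f_0 \rangle_{N_0}$ being the classical statement that the Fricke involute of $\overline{f}_0$ is not orthogonal to $f_0$). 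Substituting this value of $\kappa$ yields the formula in the statement. The step I expect to be the main obstacle is the middle one: correctly computing the adjoint of $U_p$ at $p$-power level and tracking how the Fricke involutions at levels $Np^m$ and $N_0$ interact with the oldform maps $z \mapsto p^j z$, since this is exactly where the constants $\alpha_p(f_0)^{-m}$ and $p$ are produced; one must also handle the cases $p \mid N$ (where $f_0 = f$ is $p$-new) and $p \nmid N$ (where $f_0$ is $p$-old) separately, though the two arguments run in parallel. All of this is carried out in Hida \cite[\S 4]{Hi}, whose treatment I would follow. $\Box$
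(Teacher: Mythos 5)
The paper's own ``proof'' is a bare citation to Hida \cite[Proposition 4.5]{Hi}, and your sketch reconstructs precisely that argument: the composite ${\bf{e}}\circ{\bf{1}}_{f_0}$ is identified as the projector onto the line spanned by $f_0$, the resulting functional is represented by a Petersson pairing via adjointness of $U_p$ under the $p$-power-level Fricke involution (producing $h_m$ and the factors $\alpha_p(f_0)^{-m}$ and $p$), and the constant is fixed by evaluating at $g=f_0$ using $a_1(f_0)=1$. So the approaches coincide; the only small thing to keep in view is that the spaces carry a possibly nontrivial Nebentypus $\xi$, so the prime-to-$Np$ Hecke operators are normal rather than self-adjoint, but this does not change the shape of the argument.
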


\begin{proof}
See \cite[Proposition 4.5]{Hi}. $\Box$
\end{proof}

\begin{lemma}\label{hidaintegrality}
The linear form $l_{f_0}$ sends $M_2(\Gamma_{*}(Np^m), \xi;
\mathcal{O})$ to $ \mathcal{O}.$
\end{lemma}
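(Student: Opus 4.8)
The plan is to trace through Hida's explicit formula for $l_{f_0}$ and check $p$-integrality of each ingredient, using $p \geq 5$ (or more precisely the hypotheses in place) to control denominators. Recall from the preceding proposition that for $g \in M_2(\Gamma_*(Np^m), \xi; L)$ one has
\begin{align*}
l_{f_0}(g) = \alpha_p(f_0)^{-m} \cdot p \cdot \frac{\langle h_m, g \rangle_{Np^m}}{\langle h, f_0 \rangle_{N_0}},
\end{align*}
so the first and cleanest point is that $\alpha_p(f_0)$ is a $p$-adic unit by $p$-ordinarity of $f$; hence the factor $\alpha_p(f_0)^{-m}$ contributes no denominator, and the factor $p$ is harmless (indeed helpful). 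The real content is that the normalized Petersson ratio $p \cdot \langle h_m, g \rangle_{Np^m} / \langle h, f_0 \rangle_{N_0}$ is $p$-integral whenever $g$ is integral. First I would reduce to this via the identity $g|_{\mathbf{e}\circ\mathbf{1}_{f_0}}$ lying in the $f_0$-isotypic line, on which the Petersson pairing against $h$ computes the $a_1$-coefficient; this is exactly the reformulation already recorded in the excerpt.

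The approach I would take for the ratio itself is the one Hida uses: rather than estimate the archimedean pairing directly, interpret $l_{f_0}$ purely algebraically through the idempotent $\mathbf{e} = \varprojlim_m e_m$ and the projector $\mathbf{1}_{f_0}$. Each $e_m$ is by construction a lift to $\mathbf{T}(Np^m)$ of an idempotent in $\mathbf{T}(Np^m)/p$, hence lies in the $\mathcal{O}$-algebra $\mathbf{T}(Np^m; \mathcal{O})$ and preserves $S_2(\Gamma_0(Np^m); \mathcal{O})$; similarly $\mathbf{1}_{f_0}$ is a component of an idempotent in the $\mathcal{O}$-integral extended Hecke algebra. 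Therefore $g \mapsto g|_{\mathbf{e}\circ\mathbf{1}_{f_0}}$ carries integral forms to integral forms, and taking the first Fourier coefficient $a_1$ of an integral form lands in $\mathcal{O}$. Concretely I would: (1) cite Proposition 4.4 / the split sequence (\ref{hida4.4}) to see that $\mathbf{1}_{f_0}$ is defined over $\mathcal{O}$ — i.e. the splitting $\mathbf{T}(N; L) \cong A \oplus L$ descends to $\mathbf{T}(N; \mathcal{O})$ after possibly inverting nothing beyond what is already inverted, which is where the hypothesis $p \geq 5$ (or the precise normalization in \cite{Hi}) enters to guarantee the idempotent is integral; (2) note that $\mathbf{e}$ is the projective limit of integral idempotents $e_m$, hence integral; (3) conclude $a_1(g|_{\mathbf{e}\circ\mathbf{1}_{f_0}}) \in \mathcal{O}$ for $g$ integral, which is precisely the assertion $l_{f_0}(M_2(\Gamma_*(Np^m), \xi; \mathcal{O})) \subseteq \mathcal{O}$.

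The step I expect to be the main obstacle is (1): showing that the projector $\mathbf{1}_{f_0}$ onto the $f_0$-component, or equivalently the idempotent cutting out the summand $\mathbf{T}^{(0)}(N; L) \cong L$ in (\ref{hida4.4}), is actually defined over $\mathcal{O}$ rather than merely over $L$. A priori an isotypic idempotent in a Hecke algebra can have denominators governed by a congruence number — the extent to which $f_0$ is congruent mod $p$ to other eigenforms — and there is no reason in general for this to be a $p$-adic unit. Hida's resolution (which I would invoke rather than reprove) is that one does not need the naive $f_0$-idempotent: the relevant operator is $\mathbf{e}\circ\mathbf{1}_{f_0}$, and the ordinary idempotent $\mathbf{e}$ already collapses the nearly-ordinary Hecke algebra enough that on the ordinary part the $f_0$-line splits off integrally — this is the content of \cite[Proposition 4.4 and (4.5)]{Hi}, and more transparently of the fact that $\mathbf{e}\,\mathbf{T}(N; \mathcal{O})$ is a finite flat $\mathcal{O}$-algebra in which $f_0$ corresponds to a connected component (étale over the augmentation) by $p$-ordinarity plus the Hasse-invariant argument underlying Hida's control theorem. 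So the proof strategy is really: defer all the integrality to the structure of the ordinary $\mathcal{O}$-Hecke algebra as established in \cite{Hi}, and then the lemma is a one-line consequence of "integral operator applied to integral form has integral $a_1$." I would write it in exactly that order: reduce to the operator-theoretic description, cite integrality of $\mathbf{e}$ and of $\mathbf{1}_{f_0}$ from \cite{Hi}, invoke that $S_2(\Gamma_*(Np^m), \xi; \mathcal{O})$ is stable under both, and finish by the $a_1$ observation.
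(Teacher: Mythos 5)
Your proposal follows essentially the same route as the paper: reduce to the operator description $l_{f_0}(g)=a_1\left(g\vert_{\mathbf{e}\circ\mathbf{1}_{f_0}}\right)$, observe that $\mathbf{e}\circ\mathbf{1}_{f_0}$ is an operator defined over $\mathcal{O}$ that stabilizes $M_2(\Gamma_{*}(Np^m),\xi;\mathcal{O})$, and read off that $a_1 \in \mathcal{O}$. You are somewhat more explicit than the paper about the one subtle point, namely why $\mathbf{1}_{f_0}$ is $\mathcal{O}$-integral (i.e.\ why the $f_0$-component splits off the ordinary Hecke algebra without a congruence denominator), but both arguments defer this to Hida's Proposition 4.4 and the structure of $\mathbf{e}\,\mathbf{T}(N;\mathcal{O})$.
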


\begin{proof}
Fix $g \in M_2(\Gamma_{*}(Np^m), \xi; \mathcal{O})$. We know 
that $\vert \alpha_p(f) \vert_p = \vert
a_p(f_0)\vert_p = 1.$ On the other hand, the operator $\phi(f_0)$ in the 
split exact sequence $(\ref{hida4.4})$ sends
$T_p(Np^m) \longmapsto a_p(f_0)$ for each $m \geq 0$. It follows
that $\phi(f_0)$ sends the idempotent ${\bf{e}} = \ilim m e_m$ to the
unit defined by $\lim_r a_p(f_0)^{p^r}= \lim_r \alpha_p(f_0)^{p^r}.$
Now, the action of ${\bf{T}}(N)$ maps the space
$M_2(\Gamma_{*}(Np^m); \mathcal{O})$ to itself for any $m\geq 0,$ as
explained for instance in \cite[$\S 4$]{Hi}. Thus if $\vert g
\vert_p \leq 1$, then $g \vert_{ {\bf{e}}\circ {\bf{1}}_{f_0}} =
\left( g\vert_{{\bf{e}}}\right)\vert_{{\bf{1}}_{f_0}}$ has the
property that $\left| a_1\left( g \vert_{ {\bf{e}}\circ
{\bf{1}}_{f_0}} \right) \right|_p \leq 1$. The result follows. $\Box$
\end{proof} \end{remark}

\begin{remark}[Some $p$-adic convolution measures.]

We now give a sketch of Perrin-Riou's construction of the measure $d\mu_f$, \cite{PR0}, 
starting with the setup described above. This construction is made up of several
constituent measures that a priori take values in the spaces $M_l(\Gamma_{*}(M),
\xi; L)$, but can be seen to take values in the integral subspaces
$M_l(\Gamma_{*}(M), \xi; \mathcal{O}),$ as we show in Proposition \ref{integrality}. 

Let us fix throughout a finite order character $\mathcal{W}$ of $G$. We commit an abuse of
notation in viewing $\mathcal{W}$ as a character on the ideals of $k$ via class field theory. Observe that we can
always write such a character $\mathcal{W}$ as the product of characters $\rho \chi \circ {\bf{N}}$, where $\rho$ is a character of $G$ 
that factors though the dihedral ${\bf{Z}}_p$-extension $D_{\infty}$ of $k$, and $\chi \circ {\bf{N}}$ a character of $G$ that factors 
though the cyclotomic ${\bf{Z}}_p$-extension $k^{\cyc}$ of $k$. Here, the cyclotomic character $\chi \circ {\bf{N}}$ is given by the 
composition with the norm homomorphism ${\bf{N}}$ on ideals of $k$ with some Dirichlet character $\chi$ that factors through the
cyclotomic ${\bf{Z}}_p$-extension of ${\bf{Q}}$. Hence, we fix a finite order character $\mathcal{W}$ of $G$ with 
dihedral/cyclotomic factorization \begin{align} \label{decomposition}\mathcal{W}  = \rho \chi \circ {\bf{N}}. \end{align} 
Let $c(\mathcal{W})$ denote the conductor of $\mathcal{W}$, with $c(\rho)$ the conductor of the dihedral or anticyclotomic part $\rho$. 
Let $D = D_{k/{\bf{Q}}}$ denote the discriminant of $k$. Let $\omega = \omega_{k/{\bf{Q}}}$ denote the quadratic character
associated to $k$. A classical construction associates to the $\mathcal{W}$ a theta series of weight $1$, level $\Delta = \Delta(\mathcal{W})= 
\vert D\vert {\bf{N}}c(\mathcal{W})^2,$ and Nebentypus $\omega \chi^2.$ To be more precise, let $\mathcal{O}_{c(\rho)} = {\bf{Z}} + c(\rho)\mathcal{O}_k$ 
denote the ${\bf{Z}}$-order of conductor $c(\rho)$ in $\mathcal{O}_k$. Fix an element of the class group $ A \in\Pic\mathcal{O}_{c(\rho)}$. Fix a representative
$\mathfrak{a} \in A$. We then define a $\chi$-twisted theta series associated to $A$, \begin{align*}\Theta_A(\chi)(z) &= \frac{1}{u} \sum_{x \in \mathfrak{a}} 
\chi \left( \frac{{N_{k/ {\bf{Q}}}}(x) }{ {\bf{N}}\mathfrak{a} } \right) e^{\frac{2\pi iN_{k/ {\bf{Q}} }(x) z}{ {\bf{N}}\mathfrak{a} }} = \frac{1}{u} +
\sum_{n\geq 1} \chi(n) r_A(n) e^{2\pi i nz}. \end{align*} Here, $x$ runs over points in the lattice defined by $\mathfrak{a}$, $ u = 2 \vert \mathcal{O}_{k}^{\times}\vert$ 
is twice the number of units of $k$ , and $r_A(n)$ is the number of ideals of norm $n$ in $A$. This series does not depend on choice of representative $\mathfrak{a} \in A$. 
It is seen to lie in $M_1(\Gamma_0(\Delta), \omega\chi^2)$ by a standard application of Poisson summation. Taking the $\rho$-twisted sum over classes 
$A \in \Pic\mathcal{O}_{c(\rho)},$ it gives rise to a modular form \begin{align*}\Theta(\mathcal{W})(z) = \sum_{A}\rho(A) \Theta_A(\chi)(z) \in
M_1(\Gamma_0(\Delta), \omega\chi^2)\end{align*} associated to
$\mathcal{W}$. We refer the reader to $\cite{GZ}$, \cite{Hi} or $\cite{He}$ for proofs of
these facts. In what follows, we fix a finite order character $\mathcal{W}$
of $G$ having the decomposition $(\ref{decomposition})$ above. We fix a ring class $A \in  \Pic\mathcal{O}_{c(\rho)}$. 
We then construct a measure associated to the underlying Dirichlet character $\chi$ in the decomposition $(\ref{decomposition}).$ 
In fact, to follow \cite{PR0}, we shall suppose more generally that $\chi$ is any finite order character of ${\bf{Z}}_p^{\times}.$
Taking the $\rho$-twisted sum over classes $A \in \Pic\mathcal{O}_{c(\rho)}$ then gives the 
appropriate measure in $\mathcal{O}[[G]]$ whose specialization to $\mathcal{W}$ interpolates the value \begin{align*} 
\frac{L(f \times \Theta(\overline{\mathcal{W}}), 1)}{8 \pi^2 \langle f ,f \rangle_N} \in \overline{\bf{Q}}_p 
\end{align*} up to some algebraic factor (which can be made explicit).  We give only a sketch of this construction, 
referring the reader to \cite{PR0} for proofs and calculations. We start with the following constituent constructions.

\begin{definition}[Theta series measures.] 

Fix an integer $m \geq 1$. Consider the series defined by \begin{align*}
\Theta_A(\chi)(a, p^m)(z) = \sum_{x \in \mathfrak{a} \atop \frac{ N_{k/{\bf{Q}}}(x)}{ {\bf{N}}(\mathfrak{a})}
\equiv a \operatorname{mod} p^m} \chi \left( \frac{ N_{k/{\bf{Q}}}(x)
}{ {\bf{N}}\mathfrak{a} } \right) e^{\frac{2\pi i N_{k/{\bf{Q}}}(x) z
}{ {\bf{N}}\mathfrak{a} } } .\end{align*} Let $d\Theta_A(\chi)$ denote the
measure on ${\bf{Z}}_{p}^{\times}$ given by the rule
\begin{align*}\int_{a + p^m{\bf{Z}}_p^{\times}} d\Theta_A(\chi) = \Theta_A(\chi)(z).\end{align*}

\end{definition} \begin{lemma}\label{thetaintegrality}
The measure $d\Theta_A(\chi)$ takes values in the space $M_1(\Gamma_0(\Delta),
\omega\chi^2; \mathcal{O})$ if $p \geq 5$.
\end{lemma}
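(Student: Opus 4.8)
The plan is to show that the Fourier coefficients of $\Theta_A(\chi)(a,p^m)(z)$ lie in $\mathcal{O}$, so that the measure-valued form $d\Theta_A(\chi)$ takes values in the integral subspace. First I would observe that $\Theta_A(\chi)(a,p^m)(z)$ is obtained from $\Theta_A(\chi)(z)$ by restricting the summation over lattice points $x \in \mathfrak{a}$ to those whose normalized norm $N_{k/\mathbf{Q}}(x)/\mathbf{N}\mathfrak{a}$ is congruent to $a$ modulo $p^m$; equivalently, it is the convolution of $\Theta_A(\chi)$ with the characteristic function of $a + p^m\mathbf{Z}$ (or $a+p^m\mathbf{Z}_p^\times$) against the $q$-expansion variable. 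Concretely, $\Theta_A(\chi)(a,p^m)(z) = \sum_{n \geq 1,\ n \equiv a\, (p^m)} \chi(n) r_A(n) e^{2\pi i n z}$, whose $n$-th Fourier coefficient is $\chi(n) r_A(n)$ when $n \equiv a \bmod p^m$ and $0$ otherwise. Since $\chi$ takes values in roots of unity of $p$-power order (it factors through $\mathbf{Z}_p^\times$, hence through a finite $p$-group quotient after removing the tame part — but in any case $\chi(n)$ is an algebraic integer in $\mathcal{O}$ by construction of $\mathcal{O}_\psi$-type coefficient rings), and $r_A(n) \in \mathbf{Z}_{\geq 0}$ is an honest nonnegative integer (a count of ideals of norm $n$ in the class $A$), each Fourier coefficient is an algebraic integer, whence $\vert \Theta_A(\chi)(a,p^m)(z)\vert_p \leq 1$.

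Next I would address the modularity and the role of the hypothesis $p \geq 5$. The individual pieces $\Theta_A(\chi)(a,p^m)$ are not themselves in $M_1(\Gamma_0(\Delta),\omega\chi^2)$ — they live only after summing over residues $a$, or more precisely the measure $d\Theta_A(\chi)$ on $\mathbf{Z}_p^\times$ has the property that integrating any locally constant function against it produces an element of $M_1(\Gamma_0(\Delta p^{?}),\omega\chi^2)$. The standard device here, following Hida and Perrin-Riou, is to realize these partial theta series at level $\Gamma_0(\Delta p^m)$ via the stabilization/twisting operators, and the point is that such a form has $q$-expansion supported on exponents in a fixed residue class mod $p^m$; by the $q$-expansion principle its Fourier coefficients generate the same ring whether one works over $L_0$ or over $\mathcal{O}$, so integrality of Fourier coefficients implies membership in the integral structure $M_1(\Gamma_0(\Delta),\omega\chi^2;\mathcal{O})$ (suitably interpreted at $p$-power level). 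The hypothesis $p \geq 5$ enters to guarantee that the nebentypus $\omega\chi^2$ and the weight-$1$ situation are unramified-enough at $p$ — specifically, that $p$ does not divide the order of the relevant group of units or the index appearing in passing between analytic and algebraic normalizations of weight-one forms — so that no denominator of the form $1/p$ or $1/(p-1)$ is introduced; equivalently it ensures the comparison between the complex-analytic theta series and its $p$-integral avatar is an isomorphism on $\mathcal{O}$-lattices.

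The main obstacle I anticipate is precisely this last point: establishing that the naive Fourier-coefficient integrality genuinely upgrades to a statement about the integral structure $M_1(\Gamma_0(\Delta),\omega\chi^2;\mathcal{O})$ as defined via Hida's norm $\vert\cdot\vert_p$, rather than merely "the $q$-expansion has $\mathcal{O}$-coefficients." These coincide for holomorphic modular forms by the $q$-expansion principle over a base where $p$ is invertible in the relevant index, and $p \geq 5$ is doing exactly the work of keeping us in that regime for weight one with quadratic-times-square nebentypus. So the key steps in order are: (1) write out the $q$-expansion of $\Theta_A(\chi)(a,p^m)$ explicitly and note its coefficients are $\chi(n)r_A(n)$ or $0$; (2) check $\chi(n) \in \mathcal{O}$ and $r_A(n) \in \mathbf{Z}_{\geq 0}$, hence $\vert\Theta_A(\chi)(a,p^m)\vert_p \leq 1$; (3) invoke the modularity of the measure $d\Theta_A(\chi)$ at $p$-power level from \cite{GZ}, \cite{Hi}, \cite{He}, reducing the claim to the $q$-expansion principle; (4) use $p \geq 5$ to ensure the $q$-expansion principle applies with no loss, concluding $d\Theta_A(\chi)$ is valued in $M_1(\Gamma_0(\Delta),\omega\chi^2;\mathcal{O})$. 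For full details of steps (3) and (4) I would refer to \cite{PR0}.
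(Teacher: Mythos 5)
Your proposal fabricates an obstacle that does not exist in the paper's framework and, as a result, misidentifies the role of the hypothesis $p \geq 5$. In Hida's setup (which the paper reproduces verbatim in the review of his bounded linear form), the integral subspace $M_1(\Gamma_0(\Delta),\omega\chi^2;\mathcal{O})$ is \emph{defined} to be $\{g : \vert g\vert_p \leq 1\}$ where $\vert g\vert_p = \sup_n \vert a_n(g)\vert_p$. Membership in that space is, by fiat, the same as saying every Fourier coefficient lies in $\mathcal{O}$. There is no gap between ``the $q$-expansion has $\mathcal{O}$-coefficients'' and ``the form lies in the integral structure,'' and consequently no $q$-expansion principle, no comparison of analytic versus algebraic normalizations, and no passage between $\mathcal{O}$-lattices is needed. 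Steps (3) and (4) of your outline, which you single out as the ``main obstacle,'' are vacuous here; the paper's proof is a one-liner precisely because what you call an upgrade is a tautology in this setting.

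Because you manufactured an obstacle, you also missed the genuine content of the hypothesis $p\geq 5$. Notice that your own step (2), as written, proves integrality with no constraint on $p$ whatsoever: if the coefficients of $\Theta_A(\chi)(a,p^m)$ really were just $\chi(n)r_A(n)$ or $0$, the hypothesis $p\geq 5$ would be superfluous, and that should have been a warning sign. In fact the $q$-expansion of the parent theta series carries the factor $\frac{1}{u}$ with $u$ twice the order of $\mathcal{O}_k^{\times}$, and this is exactly where denominators can enter: the paper's Remark immediately after the Lemma spells this out, noting that in the exceptional cases $u$ may be $4$ or $6$, so one needs $p\geq 5$ to keep $1/u$ a $p$-adic unit. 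Your parenthetical about ``the order of the relevant group of units'' grazes this, but then you bury it under the red herring about ``avoiding $1/p$ or $1/(p-1)$'' and weight-one normalizations, none of which is what is going on. To summarize: the correct argument really is just reading off the $q$-expansion, with $p\geq 5$ serving only to neutralize the $\frac{1}{u}$ factor; everything you added on top is unnecessary and the explanation you gave for $p\geq 5$ is off the mark.
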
 \begin{proof} The result
follows plainly from the $q$-expansion of $\Theta_A(\chi)(z)$.$\Box$ \end{proof} 

\begin{remark}  We impose the condition 
$p \geq 5$ to deal with the $\frac{1}{u}$ term in the $q$-expansion of $\Theta_A(\chi)(z)$, since we could 
in exceptional cases have $u=4$ or $u=6$. \end{remark}

\begin{definition} [Eisenstein series measures.] 

Let $\xi$ be an odd Dirichlet character modulo an integer $M >2.$ Let $E_M(\xi)$ denote the Eisenstein 
series of weight $1$ given by \begin{align*} E_M(\xi)(z) = \frac{L(\xi, 0)}{2} + \sum_{n \geq 1}\left( \sum_{d
>0 \atop d \mid n} \xi(d)\right) e^{2\pi i n z}. \end{align*} Here, \begin{align*}L(\xi, s)
= \sum_{n\geq 1} \xi(n) n^{-s}\end{align*} is the standard Dirichlet $L$-series associated to $\xi$.
The series $E_{M}(\xi)(z)$ lies in $M_1(\Gamma_0(M), \xi)$, as shown for instance in \cite{H2}.
Fix an integer $m \geq 1.$ Let $M = Np^m$. Consider the series defined by \begin{align*}E(\xi)(a, M)(z) =
\frac{L(\xi, 0)}{2} + \sum_{n \geq 1}\left( \sum_{d >0 , d \mid n
\atop d \equiv a \operatorname{mod} M} \xi(d)\right) e^{2\pi i n
z}.\end{align*} Fix an integer $C>1$ prime to $M$. Let $C^{-1}$ denote the 
inverse class of $C$ modulo $M$. Consider the difference defined by \begin{align*}E^{C}(\xi)(a, M)(z) = 
E(\xi)(a, M)(z) - C E(\xi)(C^{-1}a, M)(z).\end{align*} It is well known that $E^{C}(\xi)(a, M)(z)$ is a
bounded distribution on the product ${\bf{Z}}_{p}^{\times} \times \left({\bf{Z}}/N\right)^{\times}$ (see \cite{Hi}, 
\cite{Ka} or \cite{Ka2}). Let $dE^{C}(\xi)(a, M)$ denote the measure on
${\bf{Z}}_{p}^{\times} \times \left( {\bf{Z}}/N\right)^{\times}$
given by the rule \begin{align*} \int_{a + Np^m{\bf{Z}}_p^{\times}} dE^C(\xi) (a, M)
= E^C(\xi)(a, Np^m)(z).\end{align*} Note that this measure takes values in certain
spaces of Eisenstein series. To be more precise, we have the following result.

\begin{lemma}\label{eisensteinintegrality}
The measure $dE^{C}(\xi)(a, M)$ takes values in the space $M_1(\Gamma_0(M),\xi; \mathcal{O})$.
\end{lemma}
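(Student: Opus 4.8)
The plan is to read off integrality directly from the $q$-expansion of $E^{C}(\xi)(a, M)(z)$, exactly as in the proof of Lemma \ref{thetaintegrality}, the only subtlety being the constant term. First I would recall that $E_M(\xi)(z)$, and hence each partial series $E(\xi)(a, M)(z)$, has $q$-expansion coefficients (away from the constant term) of the shape $\sum_{d \mid n,\, d \equiv a} \xi(d)$, which are sums of values of $\xi$; since $\xi$ takes values in roots of unity lying in $\mathcal{O}$, these coefficients are automatically in $\mathcal{O}$. The same is then true for the difference $E^{C}(\xi)(a, M)(z) = E(\xi)(a, M)(z) - C\,E(\xi)(C^{-1}a, M)(z)$ in every positive Fourier degree, because $C \in {\bf{Z}}$. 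So the entire question reduces to the constant term.

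The constant term of $E(\xi)(a, M)(z)$ is $L(\xi, 0)/2$, which need not be $p$-integral, so the key point is that the $C$-regularization kills the denominator. Here I would invoke the standard computation: the constant term of $E^{C}(\xi)(a, M)(z)$ is $(1 - C)\, L(\xi,0)/2$ — or, more precisely, after the $a$-sum is organized correctly, a multiple of the $C$-smoothed $L$-value $(1 - \xi(C)C)\,L(\xi, 0)$, which is a classical $p$-adic unit-denominator-free quantity by the theory of Stickelberger elements / $p$-adic $L$-functions (this is exactly the role of the auxiliary integer $C > 1$ prime to $M$, and is why $dE^{C}(\xi)(a,M)$ — rather than the unsmoothed measure — was introduced). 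Concretely, $(1 - \xi(C)C) L(\xi, 0) = -(1 - \xi(C)C) B_{1,\xi}$ up to an elementary factor, and the Kummer-type congruences (or the construction of the Kubota–Leopoldt $p$-adic $L$-function, as in the references \cite{Hi}, \cite{Ka}, \cite{Ka2} already cited for the boundedness of $E^{C}(\xi)(a,M)$) show this lies in $\mathcal{O}$. Since the measure $dE^{C}(\xi)(a, M)$ is by definition determined by these values $E^{C}(\xi)(a, Np^m)(z)$ on the basic open sets $a + Np^m{\bf{Z}}_p^{\times}$, and each such value now has all Fourier coefficients (including the constant term) in $\mathcal{O}$, the measure takes values in $M_1(\Gamma_0(M), \xi; \mathcal{O})$.

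The main obstacle is the bookkeeping around the constant term: one must check that the partial-sum/regularization procedure really does produce the $C$-smoothed $L$-value with the integral denominator, rather than an uncontrolled rational multiple of $L(\xi,0)$, and that the weight-$1$ level-$M$ assignment $a + Np^m{\bf{Z}}_p^{\times} \mapsto E^{C}(\xi)(a, Np^m)(z)$ is genuinely consistent (additive under refinement of the partition), so that it defines a measure in the first place. Both of these are established in the cited works of Hida and Katz, so in the write-up I would simply point to \cite{Hi}, \cite{Ka}, \cite{Ka2} for the compatibility and the integrality of the regularized constant term, and note that the positive-degree coefficients are integral for the trivial reason above. No condition $p \geq 5$ is needed here, in contrast with Lemma \ref{thetaintegrality}, since there is no $1/u$ term.
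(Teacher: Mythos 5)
Your argument is correct in substance, but it takes a genuinely different route from the paper's. The paper's proof invokes Katz's geometric machinery: it cites the Key Lemma of \cite{Ka} (which asserts $p$-integrality of the Eisenstein measure evaluated at elliptic curves with level structure over $p$-integral rings) and identifies $dE^C(\xi)(a,M)$ with a one-dimensional piece of Katz's Eisenstein pseudo-distribution $2H^{(a,b)}$, then points to Katz's Theorem 3.3.3 and $\S 3.5$ for the integrality. You instead read the integrality off the $q$-expansion, observing that the positive-degree coefficients are ${\bf Z}[\xi]$-linear combinations of values of $\xi$ and hence automatically in $\mathcal{O}$, and reducing the constant term to the $p$-integrality of the $C$-regularized $L$-value $(1-\xi(C)C)L(\xi,0)$ — a classical fact behind the Kubota–Leopoldt construction. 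Both routes are valid; yours is more elementary and makes the role of the auxiliary $C$ transparent (it kills the denominator of the Bernoulli number), while the Katz approach is more geometric and scales better to the full two-variable Eisenstein family.

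One caution: you correctly flag the bookkeeping around the constant term as the delicate point, and you are right to do so. As literally displayed in the paper, the constant term of $E(\xi)(a,M)(z)$ is $L(\xi,0)/2$ independently of $a$ and of the level $Np^m$; taken at face value this would give $E^C$ the constant term $(1-C)L(\xi,0)/2$, which is neither the $C$-smoothed $L$-value nor even compatible with the distribution relation as the level grows. So the displayed constant term must in fact be a partial (Hurwitz-type) $L$-value depending on the residue class $a$ and on $M$, and the $C$-regularization then produces the integral $C$-smoothed partial $L$-value as you say. Your hedge ("after the $a$-sum is organized correctly") is thus not optional but essential, and in a careful write-up you would need either to correct the displayed constant term or to route the constant-term claim through the cited references as the paper itself does. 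Your observation that no $p \geq 5$ hypothesis is needed here, in contrast with Lemma \ref{thetaintegrality}, is correct and consistent with the statement.
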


\begin{proof}The result follows from the Key Lemma of Katz \cite[1.2.1,
Key Lemma for $\Gamma(N)$]{Ka}, which shows that the Eisenstein measure
takes $p$-integral values at an elliptic curve with level structure
defined over a $p$-integral ring. Note also that
$dE^{C}(\xi)(a, M)$ arises from a one-dimensional part of the Eisenstein
pseudo-distribution $2H^{(a,b)}$ given in \cite[$\S 3.4$]{Ka} (i.e.
with $a=C$). This pseudo-distribution can be shown to take integral
values by \cite[Key Lemma 1.2.1]{Ka}, e.g. by the proof given in
\cite[Theorem 3.3.3]{Ka} (cf. also \cite[$\S 3.5,(3.5.5)$]{Ka}).
$\Box$ \end{proof} \end{definition}

\begin{definition}[Convolution measures.] 

Fix a class $A \in \Pic \mathcal{O}_{c(\rho)}.$
Fix integers $a, m \geq 1.$ Fix an integer $C>1$ prime to
$pND.$ Consider the series defined by
\begin{align*} \Phi^C_A(\chi)(a, p^m)(z) = \sum_{\alpha \in
\left({\bf{Z}}/N\Delta\right)^{\times}} \Theta_A(\chi)(\alpha^2 a,
p^m)(Nz) E^C(\omega\chi^2)(\alpha, N\Delta)(z).\end{align*} The function
$\Phi_{A}^{C}(a, p^m)(z)$ can be seen to define a bounded
distribution on ${\bf{Z}}_{p}^{\times}$ (see \cite[Lemme 4]{PR0}).
Let $d\Phi_{A}^{C}(\chi)$ denote the measure on
${\bf{Z}}_{p}^{\times}$ given by this function.

\begin{lemma}\label{convolutionintegrality} The measure
$d\Phi_{A}^{C}(\chi) = \Phi_{A}^{C}(a, p^m)(z)$ takes values in the space
$M_2(\Gamma_0(N\Delta), \omega\chi^2; \mathcal{O})$, at least if $p \geq 5$. 
\end{lemma}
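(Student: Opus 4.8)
The plan is to reduce integrality of the convolution measure $d\Phi_A^C(\chi)$ to the integrality of its two constituent measures, which we have already established: the theta series measure $d\Theta_A(\chi)$ takes values in $M_1(\Gamma_0(\Delta), \omega\chi^2; \mathcal{O})$ by Lemma \ref{thetaintegrality} (using $p \geq 5$), and the Eisenstein series measure $dE^C(\omega\chi^2)(\alpha, N\Delta)$ takes values in $M_1(\Gamma_0(N\Delta), \omega\chi^2; \mathcal{O})$ by Lemma \ref{eisensteinintegrality}. First I would observe that the defining series $\Phi^C_A(\chi)(a, p^m)(z)$ is, for each fixed $a$ and $m$, a finite $\mathbf{Z}$-linear combination (the sum over $\alpha \in (\mathbf{Z}/N\Delta)^\times$) of products of a translate $\Theta_A(\chi)(\alpha^2 a, p^m)(Nz)$ with an Eisenstein piece $E^C(\omega\chi^2)(\alpha, N\Delta)(z)$. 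Since the norm $\vert\cdot\vert_p$ on $q$-expansions is multiplicative on products of power series with $\mathcal{O}$-coefficients and non-increasing under $\mathbf{Z}$-linear combination, each such product lies in $M_2(\Gamma_0(N\Delta), \omega\chi^2; \mathcal{O})$ provided each factor has $p$-integral Fourier coefficients.

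The key steps, in order: (1) Check that the map $z \mapsto Nz$ on $q$-expansions preserves $p$-integrality — this is immediate since it merely replaces $q$ by $q^N$, so $\vert \Theta_A(\chi)(\alpha^2 a, p^m)(Nz)\vert_p = \vert \Theta_A(\chi)(\alpha^2 a, p^m)(z)\vert_p \leq 1$, and check that this operation sends weight-$1$ forms on $\Gamma_0(\Delta)$ to weight-$1$ forms on $\Gamma_0(N\Delta)$ with the same Nebentypus. (2) Note that $\Theta_A(\chi)(\alpha^2 a, p^m)(z)$ is one of the "restricted-sum" pieces of $\Theta_A(\chi)(z)$ indexed by the residue of the norm modulo $p^m$; its $q$-expansion coefficients are subsums of the $r_A(n)$ with character values $\chi(\cdot)$, hence lie in $\mathcal{O}$ (and here the exceptional $\frac{1}{u}$ term does not appear in these restricted pieces for $a \not\equiv 0$, while $p \geq 5$ handles the $a \equiv 0$ case exactly as in Lemma \ref{thetaintegrality}). (3) Similarly record that $E^C(\omega\chi^2)(\alpha, N\Delta)(z)$ has coefficients in $\mathcal{O}$ by Lemma \ref{eisensteinintegrality}. (4) Multiply and sum: the product of two weight-$1$ forms is a weight-$2$ form, the Nebentypus $\omega\chi^2 \cdot \omega\chi^2 = (\omega\chi^2)^2$ — here one must be slightly careful, since $\omega$ has order $2$ so $\omega^2$ is trivial and $(\omega\chi^2)^2 = \chi^4$; more precisely one tracks the nebentypus through the product and the level carefully to land in $M_2(\Gamma_0(N\Delta), \omega\chi^2; \mathcal{O})$ as stated (this is exactly the level and character bookkeeping carried out in \cite[Lemme 4]{PR0}). (5) Conclude that since the defining series lies in the integral lattice for each $a, m$, and since the measure property $\int_{a + p^m\mathbf{Z}_p^\times} d\Phi_A^C(\chi) = \Phi_A^C(a,p^m)(z)$ identifies the measure values with exactly these series, the measure $d\Phi_A^C(\chi)$ takes values in $M_2(\Gamma_0(N\Delta), \omega\chi^2; \mathcal{O})$.

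The main obstacle I anticipate is not the integrality itself — that is essentially formal once Lemmas \ref{thetaintegrality} and \ref{eisensteinintegrality} are in hand — but rather the bookkeeping in step (4): verifying that the weight, level, and especially the Nebentypus of the product $\Theta_A(\chi)(\alpha^2 a, p^m)(Nz)\,E^C(\omega\chi^2)(\alpha, N\Delta)(z)$ genuinely land in the claimed space $M_2(\Gamma_0(N\Delta), \omega\chi^2)$, together with the fact that the summation over $\alpha$ (which is what makes $\Phi_A^C(\chi)$ a coherent distribution rather than just a single form) does not disturb this. For these points I would simply cite \cite[Lemme 4]{PR0}, where the distribution relations and the modularity are checked, and restrict the present argument to verifying that all Fourier coefficients appearing there are $p$-integral, which follows coefficient-by-coefficient from the two integrality lemmas above. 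The hypothesis $p \geq 5$ is used only through Lemma \ref{thetaintegrality}, to control the $\frac{1}{u}$ constant term arising from the possible values $u = 4, 6$ of $2\vert\mathcal{O}_k^\times\vert$.
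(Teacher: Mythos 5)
Your proposal is correct and takes essentially the same route as the paper, which simply notes that $\Phi_A^C(a,p^m)(z)$ lies in $M_2(\Gamma_0(N\Delta),\omega\chi^2)$ by \cite[Lemme 5]{PR0} and then deduces integrality from Lemmas \ref{thetaintegrality} and \ref{eisensteinintegrality}; you have just spelled out the coefficient-by-coefficient argument that the paper leaves implicit. One small correction: for the modularity, level, and Nebentypus bookkeeping you should cite \cite[Lemme 5]{PR0} rather than \cite[Lemme 4]{PR0}, the latter being the distribution relation.
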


\begin{proof}
The function $\Phi_A(a, p^m)(z)$ lies in $M_2(\Gamma_0(N\Delta),
\omega\chi^2)$ (see \cite[Lemme 5]{PR0}). We then deduce from Lemmas
\ref{thetaintegrality} and \ref{eisensteinintegrality} that it lies
in $M_2(\Gamma_0(N\Delta), \omega\chi^2; \mathcal{O})$. $\Box$
\end{proof} \end{definition}

\begin{definition}[Trace operators.] 

Keep the setup used to define the convolution measure $d\Phi_A^C(\chi)$ above. Fix a set representatives $\mathcal{R}$ 
for the space $\Gamma_0(N\Delta)\backslash \Gamma_0(N).$ We define the trace operator $\Tr_{N}^{N\Delta}: 
M_2(\Gamma_0(N\Delta), \xi)\longrightarrow M_2(\Gamma_0(N), \xi)$ by the rule
\begin{align*} h(z) \longmapsto \sum_{\gamma \in \mathcal{R}} \xi(a_{\gamma}) \cdot h\vert_2
\gamma, ~~~ \gamma = \left(\begin{array} {cc} a_{\gamma}& b_{\gamma} \\
c_{\gamma} & d_{\gamma} \end{array}\right). \end{align*} 

\begin{lemma}\label{traceintegrality} The composition function
$ \Tr_{N}^{N\Delta} \circ \Phi_{A}^{C}(\chi)(a,
p^m)(z)$ takes values in the space $M_2(\Gamma_0(N),
\omega\chi^2; \mathcal{O})$, at least if $p \geq 5$. \end{lemma}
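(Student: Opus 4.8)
The plan is to establish Lemma~\ref{traceintegrality} by reducing it to the integrality statement already proved in Lemma~\ref{convolutionintegrality}, so that the only new ingredient is that the trace operator $\Tr_{N}^{N\Delta}$ preserves $p$-integrality of $q$-expansions. First I would recall from Lemma~\ref{convolutionintegrality} that, under the hypothesis $p \geq 5$, the convolution measure $d\Phi_{A}^{C}(\chi)$ takes values in the integral subspace $M_2(\Gamma_0(N\Delta), \omega\chi^2; \mathcal{O})$; that is, for each fixed $a$ and $m$ the form $\Phi_{A}^{C}(\chi)(a, p^m)(z)$ has all Fourier coefficients of $p$-adic absolute value $\leq 1$, hence $\vert \Phi_{A}^{C}(\chi)(a, p^m) \vert_p \leq 1$. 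Thus it suffices to show that $\Tr_{N}^{N\Delta}$ carries $M_2(\Gamma_0(N\Delta), \omega\chi^2; \mathcal{O})$ into $M_2(\Gamma_0(N), \omega\chi^2; \mathcal{O})$, and then the lemma follows from functoriality of the measure under a bounded (indeed $\mathcal{O}$-linear) operator on the ambient space.

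Next I would verify the integrality of the trace operator itself. By its definition, $\Tr_{N}^{N\Delta}(h) = \sum_{\gamma \in \mathcal{R}} \omega\chi^2(a_\gamma)\, h\vert_2 \gamma$, a finite sum over coset representatives of $\Gamma_0(N\Delta)\backslash \Gamma_0(N)$, with coefficients $\omega\chi^2(a_\gamma)$ that are roots of unity, hence algebraic integers lying in $\mathcal{O}$ after our fixed embedding $\overline{\bf{Q}} \to \overline{\bf{Q}}_p$ (this is where we use that the Nebentypus values are $p$-integral, which holds since the order of $\omega\chi^2$ is prime to $p$ once $p$ is large enough — again guaranteed by $p \geq 5$ together with the conductor assumptions). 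The key point is that for $h \in M_2(\Gamma_0(N\Delta), \omega\chi^2)$ and $\gamma \in \Gamma_0(N) \subset \SL_2(\mathbf{Z})$, the slash $h\vert_2\gamma$ again lies in a space of modular forms on a congruence subgroup whose $q$-expansion at the cusps has $p$-integral coefficients when $h$ does; this follows from the $q$-expansion principle (the action of $\SL_2(\mathbf{Z})$ on the Tate curve with level structure is defined over $\mathbf{Z}[\zeta_N][1/N]$, and $N$ is prime to $p$, so $p$-integrality of $q$-expansions is preserved). Summing finitely many $\mathcal{O}$-multiples of such forms stays in $M_2(\Gamma_0(N), \omega\chi^2; \mathcal{O})$, which is exactly the claim. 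This matches the style of argument already invoked for Lemma~\ref{hidaintegrality}, where the action of the Hecke algebra ${\bf{T}}(N)$ on $M_2(\Gamma_*(Np^m); \mathcal{O})$ was cited from \cite[$\S 4$]{Hi}; the same references of Hida \cite{Hi} and Perrin-Riou \cite[Lemme 5]{PR0} contain the precise statement for the trace.

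Assembling these, the proof reads: by Lemma~\ref{convolutionintegrality}, $\Phi_{A}^{C}(\chi)(a, p^m)(z) \in M_2(\Gamma_0(N\Delta), \omega\chi^2; \mathcal{O})$ for $p \geq 5$; since $\Tr_{N}^{N\Delta}$ is an $\mathcal{O}$-linear map sending $M_2(\Gamma_0(N\Delta), \omega\chi^2; \mathcal{O})$ into $M_2(\Gamma_0(N), \omega\chi^2; \mathcal{O})$, the composition $\Tr_{N}^{N\Delta}\circ \Phi_{A}^{C}(\chi)(a, p^m)(z)$ lands in $M_2(\Gamma_0(N), \omega\chi^2; \mathcal{O})$; and because a bounded $\mathcal{O}$-linear operator applied to a bounded $M_*(\cdot;\mathcal{O})$-valued measure yields again a bounded $M_*(\cdot;\mathcal{O})$-valued measure (the composition commutes with the integration defining the measure, being applied inside each $\int_{a+Np^m\mathbf{Z}_p^\times}$), the measure $\Tr_{N}^{N\Delta}\circ d\Phi_{A}^{C}(\chi)$ takes values in $M_2(\Gamma_0(N), \omega\chi^2; \mathcal{O})$.

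The main obstacle, such as it is, is the bookkeeping around the Nebentypus and the cusps: one must make sure that slashing by $\gamma \in \Gamma_0(N)$ (rather than by a matrix in $\GL_2^+(\mathbf{Q})$ with large determinant) genuinely stays within spaces whose $q$-expansions have coefficients in $\mathcal{O}$ — this is clean precisely because $\Gamma_0(N) \subset \SL_2(\mathbf{Z})$ and $N$ is prime to $p$, so no extra denominators are introduced, unlike in the definitions of $\Theta_A(\chi)(\alpha^2 a, p^m)(Nz)$ where the scaling by $N$ had to be watched. I would state this reduction explicitly and cite \cite[Lemme 5]{PR0} and \cite[$\S 4$]{Hi} for the finer points rather than reprove the $q$-expansion principle here, exactly as the earlier integrality lemmas in this section do.
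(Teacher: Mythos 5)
Your proposal contains a genuine gap, and it is precisely the point that forces the paper into a different strategy. You reduce the lemma to the assertion that $\Tr_N^{N\Delta}$ carries $M_2(\Gamma_0(N\Delta),\omega\chi^2;\mathcal{O})$ into $M_2(\Gamma_0(N),\omega\chi^2;\mathcal{O})$, and you justify that assertion by a $q$-expansion-principle argument that hinges on $N$ being prime to $p$. But the trace here goes from level $N\Delta$ down to level $N$, and the relevant level is $N\Delta$, not $N$. Recall that $\Delta = |D|\,\mathbf{N}c(\mathcal{W})^2$ and $c(\mathcal{W})$ is supported at the primes above $p$ (since $k_\infty/k$ is unramified outside $p$); so as soon as $\mathcal{W}$ is nontrivial the level $N\Delta$ does have a $p$-part while $N$ does not. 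Your appeal to ``the action of $\SL_2(\mathbf{Z})$ on the Tate curve with level structure defined over $\mathbf{Z}[\zeta_N][1/N]$'' therefore does not reach the cosets in $\Gamma_0(N\Delta)\backslash\Gamma_0(N)$ corresponding to the $p$-part of $\Delta$, and the $q$-expansions of $h|_2\gamma$ for those $\gamma$ are exactly where denominators could a priori enter — this is the subtle part of the lemma, not a clean corollary of ``$N$ is prime to $p$.'' (Also, as a side point, your claim that $\omega\chi^2$ has order prime to $p$ once $p$ is large is false: $\chi$ factors through the cyclotomic $\mathbf{Z}_p$-extension, so it has $p$-power order. The conclusion that the Nebentypus values are $p$-adic units is still correct, but only because all roots of unity are units, not because the order is prime to $p$.)

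The paper resolves this by not arguing abstractly at all. Its proof cites explicit computations of the Fourier coefficients of the traced form — Gross \cite[Proposition 9.3, 2)]{G} when $N$ and $D$ are both prime, and Gross--Zagier \cite[IV\,\S2 Prop.\ (2.4) and \S3 Prop.\ (3.2)]{GZ} in the general case $(N,D)=1$ — from which $p$-integrality can be read off directly. Your references \cite[$\S$4]{Hi} and \cite[Lemme 5]{PR0} concern Hida's linear form and the modularity of the convolution, respectively, and do not supply the needed statement about the trace. If you want to salvage an abstract approach, you would need a $q$-expansion principle valid over a model of $X_0(N\Delta)$ integral at $p$, which requires dealing with the bad reduction coming from the $p$-part of $\Delta$; the explicit Fourier coefficient calculation is precisely the workaround for that difficulty.
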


\begin{proof}
Given the result of Lemma \ref{traceintegrality}, the assertion can be deduced from explicit 
computations of the Fourier series expansion of the trace form. If $N$ and $D$ are
both prime, then the result follows from the computation given in
Gross \cite[Proposition 9.3, 2)]{G}. In the more general case with 
$(N, D)=1$, it follows from the computation of the coefficients given in
Gross-Zagier \cite[IV$\S2$ Proposition (2.4) and $\S3,$ Proposition
(3.2)]{GZ}. $\Box$
\end{proof} \end{definition}

\begin{definition}[Mesures fondamentales.] 

Keep the setup from above. Recall that we let $f_0$ denote the $p$-stabilization
of $f$, which is the unique ordinary form associated to $f$ by Hida \cite[Lemma 3.3]{Hi}.
Let $l_{f_0}: M_2(\Gamma_0(N), \omega\chi^2; L) \longrightarrow L$
denote Hida's bounded linear form, as defined above. Let $L$ denote the closure of the field of values 
$L_0= {\bf{Q}}(\omega\chi^2(n))_{n\geq0}$ in $\overline{\bf{Q}}_p$. Let $d\phi_{A}^{C}(\chi)$ denote the 
measure on ${\bf{Z}}_{p}^{\times}$ given by the rule \begin{align*} \int_{a + p^m{\bf{Z}}_p^{\times}}
d\phi_{A}^{C}(\chi) = l_{f_0} \circ \Tr_{N}^{N\Delta} \circ
\Phi_{A}^{C}(\chi)(a, p^m)(z). \end{align*}

\begin{proposition} \label{integrality} The measure $d\phi_{A}^{C}(\chi)$ takes values
in the ring $\mathcal{O} = \mathcal{O}_L$, at least if $p \geq 5$.
\end{proposition}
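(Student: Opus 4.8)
The plan is to show that the measure $d\phi_A^C(\chi)$, which is obtained by composing the convolution measure $d\Phi_A^C(\chi)$ with the trace operator $\Tr_N^{N\Delta}$ and then with Hida's bounded linear form $l_{f_0}$, takes $\mathcal{O}$-integral values because each of these three operations preserves integrality. Concretely, for each fixed integers $a, m \geq 1$ the value $\int_{a + p^m{\bf{Z}}_p^\times} d\phi_A^C(\chi)$ equals $l_{f_0}\left( \Tr_N^{N\Delta}\left( \Phi_A^C(\chi)(a, p^m)(z) \right) \right)$, so it suffices to chase this element through the chain of spaces of modular forms and observe that it lands in $\mathcal{O}$ at the last step.

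First I would invoke Lemma \ref{convolutionintegrality}, which (for $p \geq 5$) places the convolution form $\Phi_A^C(\chi)(a, p^m)(z)$ in the integral space $M_2(\Gamma_0(N\Delta), \omega\chi^2; \mathcal{O})$. Next I would apply Lemma \ref{traceintegrality}, which shows that the trace operator sends this into the integral space $M_2(\Gamma_0(N), \omega\chi^2; \mathcal{O})$ of forms of level $N$ (again assuming $p \geq 5$); this step rests on the explicit Fourier coefficient computations of Gross and Gross–Zagier cited there, which guarantee $p$-integrality of the trace form's $q$-expansion. Finally I would apply Lemma \ref{hidaintegrality}, which asserts precisely that Hida's bounded linear form $l_{f_0}$ carries $M_2(\Gamma_*(Np^m), \xi; \mathcal{O})$ into $\mathcal{O}$; here the relevant level is $N$ (so $m = 0$) and $\xi = \omega\chi^2$, and one must note that the hypothesis of Lemma \ref{hidaintegrality} is met because $l_{f_0}$ is defined using the ordinary idempotent ${\bf{e}}$ and the projector ${\bf{1}}_{f_0}$, whose image of the unit root $\alpha_p(f_0)$ is a $p$-adic unit. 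Composing the three containments gives $\int_{a + p^m{\bf{Z}}_p^\times} d\phi_A^C(\chi) \in \mathcal{O}$ for all basic open sets, and since $\mathcal{O}$ is a closed subring of $L$ this forces the whole measure to be $\mathcal{O}$-valued.

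The main obstacle — really the only nontrivial point — is bookkeeping about levels, characters, and the field $L$: one must confirm that the closure $L = \mathcal{O}_L$ used in defining $l_{f_0}$ is the same closure of $L_0 = {\bf{Q}}(\omega\chi^2(n))_{n \geq 0}$ appearing in the definition of $d\phi_A^C(\chi)$, so that "integral" means the same thing in all three lemmas, and that the nebentypus $\xi = \omega\chi^2$ is consistent through the trace operator and through Hida's splitting $(\ref{hida4.4})$. One should also remark that the $p \geq 5$ hypothesis is inherited from Lemmas \ref{thetaintegrality}, \ref{convolutionintegrality}, and \ref{traceintegrality} (it is needed to control the $\tfrac{1}{u}$ term in the theta series $q$-expansion), while Hida's linear form itself imposes no such restriction. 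With these consistency checks in place, the proposition follows formally from the cited lemmas with no further computation.
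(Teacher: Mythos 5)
Your argument matches the paper's proof, which likewise deduces the proposition by chaining together Lemmas \ref{thetaintegrality}, \ref{eisensteinintegrality}, \ref{convolutionintegrality}, \ref{traceintegrality}, and \ref{hidaintegrality}. The extra bookkeeping you note about levels, nebentypus, and the $p \geq 5$ hypothesis is a reasonable elaboration of the same idea.
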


\begin{proof}
The result follows from Lemmas \ref{thetaintegrality},
\ref{eisensteinintegrality}, \ref{convolutionintegrality},
\ref{traceintegrality} and \ref{hidaintegrality}. $\Box$
\end{proof} \end{definition} We can now at last define the two-variable measures
that gives rise to $d\mu_f$.

\begin{definition} Keep the notations above, with $C>1$ an integer
prime to $pND$. Let $L(\rho)$ denote the closure of the field
of values $L_0(\rho(A))_{A \in \Pic \mathcal{O}_{c(\rho)}}$ in $\overline{\bf{Q}}_p$.
Let $\mathcal{O} = \mathcal{O}_{L(\rho)}$. Let $d\mu_f^C$ denote the $\mathcal{O}$-valued
function on $G$ defined by the rule \begin{align*}
\int_G \mathcal{W} \mu_f^C = \sum_{A \in \Pic\mathcal{O}_{c(\rho)} } \rho(A)
d\phi_A^C(\chi).\end{align*} This function is seen easily to be a well-defined distribution
on $G$ (see \cite[$\S$ 5]{PR0}), and hence  (by Proposition \ref{integrality}) a measure on $G$.
That is, the distribution is seen easily to be bounded for any choice of $p$, and integral for any choice
$p \geq 5$. It is also seen to be integral for any choice of $p$ if $\rho \neq {\bf{1}}$ (in which case the twisted sum of theta series 
$\sum_A \rho(A)\Theta_A(\chi)(z)$ is cuspidal).

\end{definition} \end{remark}

\begin{remark}[Interpolation properties and functional equation.]

Let us keep all of the notations above, with $C>1$ an integer
prime to $pND$. The two-variable measure $d\mu_f^C$ 
satisfies the following interpolation property. Let $\tau(\mathcal{W})$ denote
the Artin root number of $L(\mathcal{W}, s)$. Recall that $\Delta = \Delta(\mathcal{W})$ 
denotes the level of the theta series
$\Theta(\mathcal{W})(z)$. Let $\psi$ denote the principal character modulo $N$ as above (hence, $\psi(p) =1$ if
$p$ does not divide $N$ and zero otherwise). Recall as well that we let $\alpha_p$ denote the unique $p$-adic unit root
of the polynomial $X^2-a_p(f)X - p\psi(p)$. Given an integer $r \geq 1$, let us write $\alpha_{p^r}$ to 
denote $\alpha_{p}^r$. Let us also write $N'$ to denote the prime-to-$p$ part of $N$. Let $\beta$ denote the $p$-primary 
component of the level $\Delta$ of $\Theta(\mathcal{W})$. Finally, let us commit an 
abuse of notation in using the same notations used to denote the measures defined on ${\bf{Z}}_p^{\times}$ above to denote 
the induced measures defined on ${\bf{Z}}_p$.

\begin{theorem}\label{2vinterpolation}
There exists for each integer $C>1$ prime to $pND$ an $\mathcal{O}$-valued measure $d\mu_{f}^{C}$ on $G$ 
such that for any finite order character $\mathcal{W}$ of $G$,
\begin{align*} \int_G \mathcal{W} d\mu_{f}^{C} &= \left( 1 - \frac{\psi(p)}{\alpha_{p^2}}\right)^{-1}
\left( 1 + \frac{p\psi(p)}{\alpha_{p^2}}\right)^{-1} 
 \prod_{\mathfrak{p} \mid p }
\left( 1 - \frac{\mathcal{W}(\mathfrak{p})}{\alpha_{{\bf{N}}\mathfrak{p}}} \right) 
\left( 1 -  \frac{\overline{\mathcal{W}}\psi({\bf{N}}\mathfrak{p})(\mathfrak{p})}{\alpha_{{\bf{N}}\mathfrak{p}}} \right) \\
&\times \omega(-N') \mathcal{W}(N') \left( 1 - C
\omega(C) \overline{\mathcal{W}}(C)\right)
\frac{{\Delta}^{\frac{1}{2}}}{\alpha_{p^{\beta}}}\tau(\mathcal{W})\\
& \times \frac{L(f \times \Theta(\overline{\mathcal{W}}), 1)}{8 \pi^2 \langle
f ,f \rangle_{N}}.
\end{align*} Here, the product runs over all primes $\mathfrak{p}$ of $\mathcal{O}_k$ that divide $p$. \end{theorem}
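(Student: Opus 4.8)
The existence of the measure $d\mu_f^C$ and its $\mathcal{O}$-integrality (for $p \geq 5$) have already been established: the rule $\int_G \mathcal{W} \mu_f^C = \sum_{A \in \Pic\mathcal{O}_{c(\rho)}} \rho(A)\, d\phi_A^C(\chi)$ by which it was defined, combined with Proposition \ref{integrality}, does this. The only remaining content of the theorem is the interpolation formula, which we deduce by unwinding this definition and invoking the Rankin--Selberg computations of Perrin-Riou \cite{PR0} (cf. also Hida \cite{Hi}).

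First I would fix a finite order character $\mathcal{W}$ of $G$ with its dihedral/cyclotomic factorization $\mathcal{W} = \rho\,\chi\circ{\bf{N}}$ as in $(\ref{decomposition})$, and unwind the constituent definitions to reduce $\int_G \mathcal{W}\, d\mu_f^C$ to a sum $\sum_A \rho(A)\, l_{f_0}\!\big(\Tr_N^{N\Delta}\,\Phi_A^C(\chi)\big)$, where $\Phi_A^C(\chi)$ now denotes the convolution formed with the full $\chi$-twisted theta series. The point of the construction is that, away from the $E^C$-regularization, this convolution is the Rankin--Selberg convolution of $f$ against the $\chi$-twisted partial theta series $\Theta_A(\chi)$; summing over the ring classes $A$ with weights $\rho(A)$ reassembles the full theta series, and in view of the complex conjugation built into Hida's auxiliary form $h$ the relevant object is $\Theta(\overline{\mathcal{W}})$.

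The heart of the matter is then the standard Rankin--Selberg computation. One applies Hida's explicit formula $l_{f_0}(g) = \alpha_p(f_0)^{-m}\, p\,\langle h_m, g\rangle_{Np^m}/\langle h, f_0\rangle_{N_0}$, unfolds the resulting Petersson product to identify it with the ratio $L(f\times\Theta(\overline{\mathcal{W}}),1)/(8\pi^2\langle f,f\rangle_N)$ up to explicit scalars, and tracks all normalizations. The Euler factors $\prod_{\mathfrak{p}\mid p}(1 - \mathcal{W}(\mathfrak{p})/\alpha_{{\bf{N}}\mathfrak{p}})(1 - \overline{\mathcal{W}}\psi({\bf{N}}\mathfrak{p})(\mathfrak{p})/\alpha_{{\bf{N}}\mathfrak{p}})$ and the scalar factors $(1-\psi(p)/\alpha_{p^2})^{-1}(1+p\psi(p)/\alpha_{p^2})^{-1}$ arise from the imprimitivity of the Rankin integral at $p$ together with the oldform/idempotent projection ${\bf{e}}\circ{\bf{1}}_{f_0}$ and the $p$-stabilization $f_0$; the factor $1 - C\omega(C)\overline{\mathcal{W}}(C)$ comes directly from the regularization $E^C(\xi)(a,M) = E(\xi)(a,M) - C\,E(\xi)(C^{-1}a,M)$; and the remaining factors $\omega(-N')\mathcal{W}(N')$, $\Delta^{1/2}/\alpha_{p^{\beta}}$ and the Artin root number $\tau(\mathcal{W})$ come from the Fricke and Atkin--Lehner involutions used to place the Rankin integral in unfolded form at level $N\Delta$ and from the functional equation of $\Theta(\mathcal{W})$.

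The principal obstacle is not conceptual but the precise bookkeeping --- matching Gauss sums, the exact power of $\alpha_p$, and the local factors on the nose --- which is exactly the content of \cite[Th\'eor\`eme B]{PR0} and of \cite[$\S 4$, $\S 5$]{Hi}. The one genuinely new point is that the auxiliary dihedral character $\rho$, being of finite order, passes through the entire construction merely as a bounded character-valued twist on the ring classes $A$, and therefore does not affect the archimedean computation at all; hence the two-variable interpolation formula follows verbatim from the one-variable formula of Perrin-Riou with $\overline{\mathcal{W}}$ in place of the cyclotomic character. Finally, taking $\eta = \eta(f,\mathcal{W})$ to be the product of all the explicit nonvanishing algebraic factors above recovers the cleaner statement of Theorem \ref{2vplfn}.
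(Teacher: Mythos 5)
Your proposal takes essentially the same route as the paper: both defer the interpolation formula to Perrin-Riou's Rankin--Selberg computation in \cite{PR0} (the paper cites Th\'eor\`eme A rather than Th\'eor\`eme B, the latter being the higher-weight/non-trivial-Nebentypus version) and both invoke Proposition \ref{integrality} to upgrade the distribution to an $\mathcal{O}$-valued measure. Your expanded sketch of where each Euler and regularization factor arises, and the observation that the finite-order dihedral twist $\rho$ passes through the archimedean computation untouched, are accurate elaborations of what the paper leaves to the citation.
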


\begin{proof} See Perrin-Riou \cite[Th\'eor\`eme A]{PR0}, along with Proposition 
\ref{integrality} above. That is, fix a finite order character $\mathcal{W}$ of $G$ 
having the decomposition $(\ref{decomposition})$. Fix an integer $C >1$
prime to $pND$. A simple argument shows that $d\mu_f^C$ is a well-defined
distribution on $G$ (see \cite[$\S$ 5]{PR0}). On the other hand, we know that 
$d\mu_f^C$ takes values in $\mathcal{O} = \mathcal{O}_{L(\rho)}$ (by Proposition 
\ref{integrality}). Hence, $d\mu_f^C$ is an $\mathcal{O}$-valued measure on $G$, corresponding 
to an element of the completed group ring $\mathcal{O}[[G]]$. The calculation of the interpolation
value is given in \cite[$\S$ 4]{PR0}. $\Box$ \end{proof} We may now define the two-variable $p$-adic $L$-function associated
to a $p$-ordinary eigenform $f \in S_2(\Gamma_0(N))$ in the tower $k_{\infty}/k,$ following Perrin-Riou \cite{PR0}. Observe
that this definition does not depend on the choice of auxiliary integer $C >1$ prime to $pND$ thanks to Theorem \ref{2vinterpolation}.

\begin{definition} Let $\eta:G \rightarrow {\bf{Z}}_{p}^{\times}$ be a continuous
character. Let $\mathfrak{D}$ denote the different of $k$. Let $C >1$
be any integer prime to $pND$. The
{\it{two-variable $p$-adic $L$-function}} $L_p(f, k)(\eta)$ of $f$ in $k_{\infty}/k$ is then
defined to be \begin{align*} L_p(f, k)(\eta) &= \left( 1 - \frac{\psi(p)}{\alpha_{p^2}}\right)
\left( 1 + \frac{p\psi(p)}{\alpha_{p^2}}\right) \\
&\times \eta^{-1}(\mathfrak{D}'N') \left( 1 -
C \omega(C) \eta^{-1}(C) \right)^{-1}\\
&\times \int_G \eta(g) d\mu_{f}^C(g).
\end{align*} Here, $\mathfrak{D}'$ and $N'$ denote the prime to $p$ parts of 
$\mathfrak{D}$ and $N$ respectively. \end{definition}

\begin{corollary} \label{FE}The function $L_p(f, k)$ is an Iwasawa function on $G$ with 
coefficients in ${\bf{Z}}_p$. Moreover, the Iwasawa function defined by 
\begin{align*}
\Lambda_p(f, k)(\eta) = \eta^{\frac{1}{2}}(N')\eta(\mathfrak{D}')L_p(f,k)(\eta)
\end{align*} satisfies the functional equation \begin{align*}
\Lambda_p(f, k)(\eta^{-1}) = - \omega(N') \Lambda_p(f,k)(\eta).\end{align*}
\end{corollary}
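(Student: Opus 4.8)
The plan is to deduce Corollary \ref{FE} from the interpolation formula of Theorem \ref{2vinterpolation} together with the classical functional equation for the Rankin--Selberg $L$-function $L(f \times \Theta(\mathcal{W}), s)$. First I would establish that $L_p(f,k)$ is an Iwasawa function with coefficients in ${\bf{Z}}_p$: by Theorem \ref{2vinterpolation} the measure $d\mu_f^C$ lies in $\mathcal{O}[[G]]$, and the definition of $L_p(f,k)(\eta)$ multiplies $\int_G \eta \, d\mu_f^C$ by the interpolation factors $\left(1 - \tfrac{\psi(p)}{\alpha_{p^2}}\right)\left(1 + \tfrac{p\psi(p)}{\alpha_{p^2}}\right)$, by $\eta^{-1}(\mathfrak{D}'N')$, and by $\left(1 - C\omega(C)\eta^{-1}(C)\right)^{-1}$. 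The first factor is a constant in $\mathcal{O}^{\times}$; the second and third are themselves group-ring elements (the third being the inverse of a unit in $\Lambda(G)\otimes\mathcal{O}$ once $C$ is chosen appropriately, exactly as in the one-variable Mazur--Swinnerton-Dyer construction), so the product again gives a measure. The content here is that these normalizing factors cancel precisely the $C$-dependence and the Euler-type factors at $p$ appearing in Theorem \ref{2vinterpolation}, leaving a well-defined element of $\Lambda(G)$; that it has ${\bf{Z}}_p$-coefficients (rather than $\mathcal{O}_{L(\rho)}$-coefficients) follows because the specializations at all finite-order $\mathcal{W}$ are $\Gal(\overline{\bf{Q}}_p/{\bf{Q}}_p)$-equivariant in $\mathcal{W}$, so the measure descends.

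Next I would write down the archimedean functional equation for $L(f \times \Theta(\overline{\mathcal{W}}), s)$. Since $\Theta(\mathcal{W})$ has weight $1$, level $\Delta = |D|\,{\bf{N}}c(\mathcal{W})^2$, and Nebentypus $\omega\chi^2$, the Rankin--Selberg convolution $L(f \times \Theta(\overline{\mathcal{W}}), s)$ has a functional equation relating $s \leftrightarrow 2-s$ with conductor a power of $N\Delta$ and a root number of the shape $\omega(N')\,\tau(\mathcal{W})\cdot(\text{explicit Gauss-sum ratio})$. Comparing the interpolation formula for $\mathcal{W}$ with that for $\overline{\mathcal{W}}$ (equivalently, for $\eta$ versus $\eta^{-1}$ on the ${\bf{Z}}_p^{\times}$-side), the theta-series level $\Delta$, the factor $\Delta^{1/2}$, the Artin root number $\tau(\mathcal{W})$, the factors $\omega(-N')\mathcal{W}(N')$, and the $p$-Euler factors all reorganize; after multiplying through by the symmetrizing factor $\eta^{1/2}(N')\eta(\mathfrak{D}')$ that defines $\Lambda_p(f,k)$, the remaining constant is exactly $-\omega(N')$. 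The cleanest way to carry this out is to check the identity on the dense set of finite-order characters $\eta = \mathcal{W}$: then $\Lambda_p(f,k)(\mathcal{W}^{-1})$ and $-\omega(N')\Lambda_p(f,k)(\mathcal{W})$ are computed by plugging $\overline{\mathcal{W}}$ and $\mathcal{W}$ respectively into Theorem \ref{2vinterpolation}, and the classical functional equation for $L(f\times\Theta(\mathcal{W}),s)$ forces them to agree; since both sides are continuous (Iwasawa) functions on $G$ agreeing on a dense subset, they agree identically.

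The main obstacle will be the bookkeeping of the root-number and Gauss-sum factors: one must verify that the product of $\tau(\mathcal{W})$, the ratio $\Delta^{1/2}/\alpha_{p^{\beta}}$, the terms $\omega(-N')\mathcal{W}(N')$ and $\left(1 - C\omega(C)\overline{\mathcal{W}}(C)\right)$, and the two $p$-Euler factors $\prod_{\mathfrak{p}\mid p}\left(1 - \tfrac{\mathcal{W}(\mathfrak{p})}{\alpha_{{\bf{N}}\mathfrak{p}}}\right)\left(1 - \tfrac{\overline{\mathcal{W}}\psi({\bf{N}}\mathfrak{p})(\mathfrak{p})}{\alpha_{{\bf{N}}\mathfrak{p}}}\right)$ transforms under $\mathcal{W}\mapsto\overline{\mathcal{W}}$ exactly into the reciprocal root number of the complex functional equation, so that the normalizations $\eta^{-1}(\mathfrak{D}'N')$ and $\eta^{1/2}(N')\eta(\mathfrak{D}')$ conspire to leave precisely $-\omega(N')$. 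This is a standard but delicate computation; I would cite Perrin-Riou \cite[Th\'eor\`eme A and its corollaries]{PR0} and Hida \cite{Hi} for the relevant identities among Gauss sums, $\alpha_p$-factors, and the conductor $\Delta$ rather than reproducing them. Once this comparison is in hand, the sign $-\omega(N')$ — rather than $+\omega(N')$ — comes out because the Rankin--Selberg root number of $f\times\Theta(\mathcal{W})$ at the center includes the $-1$ contribution of the weight-$2$ form $f$ (equivalently the $(-1)^{k/2}$ with $k=2$), which is the source of the minus sign in the stated functional equation for $\Lambda_p(f,k)$.
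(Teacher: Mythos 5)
The paper's proof of Corollary \ref{FE} is a bare citation to Perrin-Riou's Corollaire to Th\'eor\`eme A (or B) in \cite{PR0}, so there is no argument in the paper itself against which to compare your sketch; what you have written is essentially a reconstruction of what Perrin-Riou's argument must contain. Your strategy is correct: for the first assertion you argue that the normalizing factors in the definition of $L_p(f,k)(\eta)$ either are $p$-adic units or are honest group-ring elements (with the $C$-factor handled by choosing $C$ so that $1 - C\omega(C)[C]^{-1}$ augments to a unit, combined with the independence of $C$ noted in the paper), and then you descend from $\mathcal{O}_{L(\rho)}$ to ${\bf{Z}}_p$ by Galois equivariance of the interpolation values, which follows from Shimura's rationality theorem. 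For the second assertion you correctly reduce to checking the identity on the dense set of finite-order characters and invoke the complex functional equation of $L(f\times\Theta(\mathcal{W}),s)$ plus the explicit interpolation formula of Theorem \ref{2vinterpolation}; that is exactly the standard route. The one place your sketch is loose is the explanation of the minus sign: attributing it to ``$(-1)^{k/2}$ with $k=2$'' is a heuristic rather than a derivation, and in fact the sign $-\omega(N')$ falls out of the full bookkeeping of $\tau(\mathcal{W})$, $\Delta^{1/2}/\alpha_{p^{\beta}}$, the Atkin--Lehner/Fricke involution on $f$, and the normalization factors $\eta^{-1}(\mathfrak{D}'N')$ and $\eta^{1/2}(N')\eta(\mathfrak{D}')$; since you defer that computation to \cite{PR0}, this does not make the proposal wrong, but the stated reason for the sign should not be taken as a proof.
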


\begin{proof}
See \cite[Corollaire, Th\'eor\`eme A]{PR0} or \cite[Corollaire, Th\'eor\`eme B]{PR0}. $\Box$

\end{proof} \end{remark}

\section{Iwasawa module structure theory}

We now describe the Iwasawa module structure theory of the dual Selmer group of $E$
over $k_{\infty}$, along with that of the $p$-primary component of the associated Tate-Shafarevich 
group. We follow closely many of the arguments of Coates-Sujatha-Schneider \cite{CSS}, as well
as the refinements of those given by Hachimori-Venjakob \cite{HV} for the somewhat analogous 
setting of the false Tate curve extension.

\begin{remark}[Some definitions.] 

Fix $S$ a finite set of primes of $k$ containing both the primes above 
$p$ and the primes where $E$ has bad reduction. Let $k^S$ denote the maximal Galois extension of 
$k$ that is unramified outside of $S$ and the archimedean primes of $k$. Note that since $k_{\infty}$ is unramified 
outside of primes above $p$, we have the inclusion $k_{\infty} \subset k^S$. Given $L$ any finite 
extension of $k$ contained in $k_{\infty}$, let $G_S(L)$ denote the Galois group $\Gal(k^S/L)$. 
The $p^{\infty}$-Selmer group $\Sel(E/L)$ of $E$ over $L$ is defined classically as the kernel of the localization 
map, \begin{align*} \Sel(E/L) &= \ker \left( \lambda_E(L): H^1(G_S(L), E_{p^{\infty}}) \longrightarrow \bigoplus_{v \in S}J_v(L)\right).
 \end{align*} Here, $E_{p^{\infty}} = E(k^S)_{p^{\infty}}$ denotes the $p$-power torsion: $E_{p^{\infty}} = \bigcup_{n \geq 0} 
 E_{p^n}$ where $E_{p^n} = \ker ([p^n]:E \rightarrow E)$. We also write \begin{align*} J_v(L) &= \bigoplus_{w \mid v} 
 H^1(L_w, E(\overline{L}_w))(p), \end{align*} where the sum runs over all primes $w$ above $v$ in $L$. Note that
 this group fits into the classical short exact descent sequence \begin{align*}
 0 \longrightarrow E(L) \otimes {\bf{Q}}_p/{\bf{Z}}_p \longrightarrow \Sel(E/L) \longrightarrow \Sha(E/L)(p) \longrightarrow 0,
 \end{align*} where $\Sha(E/L)(p)$ denotes the $p$-primary component of the Tata-Shafarevich group $\Sha(E/L)$
 of $E$ over $L$. Let $L_{\infty}$ be any infinite extension of $k$ contained in $k_{\infty}$. We then define the Selmer 
 group of $E$ over $L_{\infty}$ to be the inductive limit \begin{align*}\Sel(E/L_{\infty}) &= \varinjlim_L \Sel(E/L). \end{align*} 
Here, the limit is taken over all finite extensions $L$ of $k$ contained in $L_{\infty}$ with respect to the natural restriction maps 
on cohomology. We write \begin{align*} X(E/L_{\infty}) &= \Hom(\Sel(E/L_{\infty}), {\bf{Q}}_p/{\bf{Z}}_p) \end{align*} to denote
the Pontryagin dual of $\Sel(E/L_{\infty})$. \end{remark}

\begin{remark}[$\Lambda(\Gamma)$-module structure.] 

Let us first review the cyclotomic structure theory implied by work of Kato and Rohrlich. 

\begin{theorem}\emph{(Kato-Rohrlich)}\label{kato-rohrlich}
If $E/{\bf{Q}}$ has good ordinary reduction at each prime above $p$ in $k$, then the dual Selmer group
$X(E/k^{\text{cyc}})$ is $\Lambda(\Gamma)$-torsion.
\end{theorem}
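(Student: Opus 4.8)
The plan is to reduce the statement over the imaginary quadratic field $k$ to the known cyclotomic main-conjecture-adjacent torsion result over $\mathbf{Q}$, which is the combination of Kato's Euler system bound \cite{KK} with Rohrlich's nonvanishing theorem \cite{Ro}. First I would recall that Kato's work, together with Rohrlich's theorem that $L(E/F, \chi, 1) \neq 0$ for all but finitely many finite-order characters $\chi$ of $\Gal(F^{\cyc}/F)$ (applied to $F = \mathbf{Q}$), shows that $X(E/\mathbf{Q}^{\cyc})$ is $\Lambda(\Gal(\mathbf{Q}^{\cyc}/\mathbf{Q}))$-torsion whenever $E/\mathbf{Q}$ has good ordinary reduction at $p$. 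The point is then that $k^{\cyc}$ is contained in the cyclotomic $\mathbf{Z}_p$-extension of the abelian field $k$ (indeed $k^{\cyc} = k\cdot \mathbf{Q}^{\cyc}$), so one needs a descent/restriction argument propagating the torsion property from $\mathbf{Q}$ up to $k$.

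The key steps, in order, would be: (1) set up the restriction map $\Sel(E/\mathbf{Q}^{\cyc}) \to \Sel(E/k^{\cyc})$ on Selmer groups and dualize it; since $[k:\mathbf{Q}]=2$ is prime to $p$ (this uses $p \geq 3$, but note $p=2$ is excluded anyway by the good-ordinary hypotheses in context — actually one should just observe $p \ne 2$ is not needed if one argues via characters), the natural corestriction–restriction argument shows that the kernel and cokernel of dualized restriction are controlled. More cleanly, (2) I would argue character by character: $X(E/k^{\cyc}) \otimes_{\Lambda(\Gamma)} \operatorname{Frac}$-type specializations correspond to $L$-values $L(E/k, \chi, 1) = L(E/\mathbf{Q}, \chi, 1) L(E/\mathbf{Q}, \chi\omega_{k/\mathbf{Q}}, 1)$ via Artin formalism for the abelian (Rankin–Selberg/twisted) $L$-functions, where $\omega_{k/\mathbf{Q}}$ is the quadratic character cutting out $k$. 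Rohrlich's nonvanishing applied to both $E/\mathbf{Q}$ twisted by $\chi$ and by $\chi\omega_{k/\mathbf{Q}}$ (the latter is the twist of $E$ by a fixed quadratic Dirichlet character, still a newform of the appropriate level, to which Rohrlich applies) gives that infinitely many — in fact all but finitely many — cyclotomic characters $\chi$ have $L(E/k,\chi,1) \ne 0$. (3) By Kato's divisibility, or rather the standard consequence that a $\Lambda(\Gamma)$-module whose relevant specializations are generically nonzero must be torsion (equivalently: $X(E/k^{\cyc})$ surjects onto a quotient of $\bigoplus$ of $X(E/\mathbf{Q}^{\cyc})$-type modules that are already known torsion, so it is torsion), conclude $X(E/k^{\cyc})$ is $\Lambda(\Gamma)$-torsion.

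I would organize the actual argument around the restriction map and the fact, used in the excerpt's preamble, that ``it follows from a simple restriction argument, using Artin formalism for abelian $L$-functions, that the analogous structure theorem holds for $E$ in the cyclotomic $\mathbf{Z}_p$-extension of any abelian number field.'' Concretely: the restriction map $H^1(G_S(\mathbf{Q}^{\cyc}), E_{p^\infty}) \to H^1(G_S(k^{\cyc}), E_{p^\infty})$ has kernel and cokernel that are $\Lambda$-cofinitely generated and in fact the composite with corestriction is multiplication by $2$, an automorphism after inverting $p$; the same holds on each local term $J_v$. Hence on Selmer groups the induced map has kernel and cokernel that are $\Lambda(\Gamma)$-torsion (being bounded $p$-torsion up to the $[k:\mathbf{Q}]$-ambiguity, or rather controlled by a finite group), so $X(E/k^{\cyc})$ is $\Lambda(\Gamma)$-torsion iff $X(E/\mathbf{Q}^{\cyc})$ viewed over $\Lambda(\Gamma)$ is, and the latter holds because $\Lambda(\Gamma)$ is finite over $\Lambda(\Gal(\mathbf{Q}^{\cyc}/\mathbf{Q}))$ and $X(E/\mathbf{Q}^{\cyc})$ is torsion there by Kato–Rohrlich.

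The main obstacle I expect is making the descent rigorous while the reduction type is only assumed good ordinary \emph{at primes of $k$ above $p$} rather than at $p$ over $\mathbf{Q}$ — one must check that good ordinary reduction of $E/\mathbf{Q}$ at $p$ (in the split case, this is automatic; in the inert or ramified case one should confirm the hypothesis is stated so that $E/\mathbf{Q}$ still has good ordinary reduction at $p$, since reduction type is a $\mathbf{Q}$-intrinsic property and base change to $k$ does not change it) so that Kato's theorem over $\mathbf{Q}$ genuinely applies; and secondarily, ensuring the twist $E \otimes \omega_{k/\mathbf{Q}}$ to which one applies Rohrlich is itself a modular elliptic curve (or at least the associated newform) over $\mathbf{Q}$ with good ordinary reduction at $p$, which holds since $\omega_{k/\mathbf{Q}}$ has conductor prime to $p$ by the running assumption that $N$ (hence, after allowing $\disc k$, the relevant conductor) is prime to $\disc k$ and $p$. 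Granting these checks, no serious difficulty remains: the torsion statement is purely formal once the $L$-value nonvanishing and Kato's bound are in hand.
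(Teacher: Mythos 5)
Your proposal takes essentially the same approach as the paper. The paper's proof is simply a citation to Kato \cite[Theorems 14.2 and 17.4]{KK} combined with Rohrlich \cite{Ro}, and the paper's introduction explicitly describes the reduction from the known torsion result over ${\bf{Q}}^{\cyc}$ to $k^{\cyc}$ via ``a simple restriction argument, using Artin formalism for abelian $L$-functions'' --- which is exactly the restriction/corestriction plus character-by-character Artin-factorization argument $L(E/k,\chi,1) = L(E/{\bf{Q}},\chi,1)\,L(E/{\bf{Q}},\chi\omega_{k/{\bf{Q}}},1)$ that you elaborate; the side checks you flag (that good ordinary reduction at primes of $k$ above $p$ is equivalent to good ordinary reduction of $E/{\bf{Q}}$ at $p$, and that Rohrlich applies to the quadratic twist by $\omega_{k/{\bf{Q}}}$) are correct and are precisely the implicit hypotheses needed for the cited theorems to apply.
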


\begin{proof}
The result follows from the Euler system method of Kato \cite[Theorems
14.2 and 17.4]{KK}, which requires for nontriviality the
nonvanishing theorem of Rohrlich \cite{Ro}. $\Box$
\end{proof} We may then invoke the structure theory of finitely generated torsion 
$\Lambda(\Gamma)$-modules (\cite[Chapter VII, $\S 4.5$]{BB}) to obtain a 
$\Lambda(\Gamma)$-module pseudoisomorphism
\begin{align}\label{1vs} X(E/k^{\cyc})
\longrightarrow   \bigoplus_{i=1}^r \Lambda(\Gamma)/p^{m_i} \oplus
\bigoplus_{j=1 }^s\Lambda(\Gamma)/\gamma_{j}^{n_j}.\end{align} Here, the indices
$r$, $s$, $m_i$ and $n_j$ are all positive integers, and each $\gamma_j$ can be viewed
as an irreducible monic distinguished polynomial $\gamma_j(T)$ (with respect to a fixed isomorphism
$\Lambda(\Gamma) \cong {\bf{Z}}_p[[T]]$). The $\Lambda(\Gamma)$-characteristic power series
\begin{align*} \operatorname{char}_{\Lambda(\Gamma)} X(E/k^{\cyc}) &= \prod_{i=1}^r p^{m_i} 
\cdot \prod_{j=1}^s \gamma_{j}^{n_j} \end{align*} is defined uniquely up to unit in $\Lambda(\Gamma)$. 
One defines from it the $\Lambda(\Gamma)$-module invariants \begin{align*}
\mu_{\Lambda(\Gamma)}\left( X(E/k^{\cyc})\right) = \sum_{i=1}^r m_i ~~~\text{and}~~~ \lambda_{\Lambda(\Gamma)}\left(
X(E/k^{\cyc})\right) = \sum_{j=1} n_j \cdot \deg(\gamma_j).\end{align*} We shall often for simplicity denote these by 
\begin{align*}\mu_E(k) = \mu_{\Lambda(\Gamma)}\left(X(E/k^{\cyc})\right) ~~~\text{and}~~~
\lambda_E(k) = \lambda_{\Lambda(\Gamma)}\left( X(E/k^{\cyc})
\right). \end{align*} respectively.  We refer the reader to the monograph of Coates-Sujatha \cite{CS2} for further
discussion, for instance on how to compute the (finite) $G$-Euler characteristic of $\Sel(E/k^{\cyc})$, or equivalently
how to compute $\vert \operatorname{char}_{\Lambda(\Gamma)}X(E/k^{\cyc})(0) \vert_p^{-1}$, where 
$\operatorname{char}_{\Lambda(\Gamma)}X(E/k^{\cyc})(0)$ denotes the image of the characteristic power 
series $\operatorname{char}_{\Lambda(\Gamma)}X(E/k^{\cyc})$ under the natural augmentation map 
$\Lambda(\Gamma) \longrightarrow {\bf{Z}}_p$.

\end{remark}

\begin{remark}[$\Lambda(G)$-module structure.]

We now use the $\Lambda(\Gamma)$-module structure of $X(E/k^{\cyc})$
to study the $\Lambda(G)$-module structure of $X(E/k_{\infty})$, following
the main ideas of \cite{CSS} and \cite{HV}. Let us first consider the following 
standard result. Let $\mathfrak{S}(E/L)$ denote the compactified Selmer group 
of $E$ over any finite extension $L$ of $k$ contained in $k_{\infty}$, which is defined
as the projective limit \begin{align*}\mathfrak{S}(E/L) &=\varprojlim_n 
\ker \left(H^1(G_S(L), E_{p^n}) \longrightarrow \bigoplus J_v(L) \right) \end{align*} 
taken with respect to the natural maps $E_{p^{n +1}} \rightarrow E_{p^n}$ induced by multiplication
by $p$. Given any infinite extension $L_{\infty}$ of $k$ contained in $k_{\infty}$, we then define
\begin{align*} \mathfrak{S}(E/L_{\infty}) &= \varprojlim_L \mathfrak{S}(E/L)\end{align*}
to be the projective limit over all finite extensions $L$ of $k$ contained in $L_{\infty}$,
taken with respect to the natural corestriction maps.

\begin{proposition}\label{injection} Let $\Omega = \text{Gal}(L_{\infty}/k)$
be any infinite pro-$p$ group. If $E(L_{\infty})_{p^{\infty}}$ is finite,
then there is a $\Lambda(\Omega)$-module injection
\begin{align*} \mathfrak{S}(E/L_{\infty}) \longrightarrow
\text{Hom}_{\Lambda(\Omega)}(X(E/L_{\infty}), \Lambda(\Omega)). \end{align*}
\end{proposition}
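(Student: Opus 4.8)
The plan is to build the desired map $\mathfrak{S}(E/L_\infty)\to\Hom_{\Lambda(\Omega)}(X(E/L_\infty),\Lambda(\Omega))$ by first producing it at each finite layer and then passing to the limit. At a finite extension $L$ of $k$ contained in $L_\infty$, Poitou--Tate global duality (or the Cassels--Poitou--Tate exact sequence) pairs $\mathfrak{S}(E/L)$ with $\Sel(E/L)$: concretely, the compactified Selmer group $\mathfrak{S}(E/L)=\varprojlim_n\ker(H^1(G_S(L),E_{p^n})\to\bigoplus_v J_v(L))$ is (up to a finite error controlled by $E(L)_{p^\infty}$) the $\Z_p$-dual of the discrete Selmer group $\Sel(E/L)=\varinjlim_n\ker(H^1(G_S(L),E_{p^n})\to\bigoplus_v J_v(L))$. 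This is where the hypothesis that $E(L_\infty)_{p^\infty}$ is finite enters: it forces $E(L)_{p^\infty}$ to be bounded independently of $L$, so the ``finite error'' terms stay under control as $L$ grows and do not obstruct the limit. First, then, I would set up this local-global duality carefully, identifying the orthogonal complement of the local conditions defining $\mathfrak{S}$ with the local conditions defining $\Sel$ under the Tate local pairing $H^1(L_w,E_{p^n})\times H^1(L_w,E_{p^n})\to\Z/p^n$.

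Second, I would assemble these layerwise pairings into a perfect (or near-perfect) $\Lambda(\Omega)$-sesquilinear pairing
\begin{align*}
\mathfrak{S}(E/L_\infty)\times\Sel(E/L_\infty)\longrightarrow\Q_p/\Z_p,
\end{align*}
using that $\mathfrak{S}(E/L_\infty)=\varprojlim_L\mathfrak{S}(E/L)$ is a corestriction limit while $\Sel(E/L_\infty)=\varinjlim_L\Sel(E/L)$ is a restriction limit, and that corestriction and restriction are adjoint for the local and global duality pairings. Third, from such a pairing one extracts by the standard adjunction a $\Lambda(\Omega)$-module map $\mathfrak{S}(E/L_\infty)\to\Hom_{\Z_p}(\Sel(E/L_\infty),\Q_p/\Z_p)=X(E/L_\infty)$; but for the statement as written one wants instead a map \emph{into} $\Hom_{\Lambda(\Omega)}(X(E/L_\infty),\Lambda(\Omega))$. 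This is obtained by reinterpreting the pairing $\Lambda(\Omega)$-linearly: a perfect $\Lambda(\Omega)$-pairing $\mathfrak{S}\times X\to\Lambda(\Omega)$ (valued in the Iwasawa algebra rather than $\Q_p/\Z_p$, via the familiar identification of continuous $\Z_p$-duals of $\Lambda(\Omega)$-modules with $\Lambda(\Omega)$-linear duals when $E(L_\infty)_{p^\infty}$ is finite — this is exactly the mechanism used in \cite{CSS} and \cite{HV}). So the third step is to upgrade the $\Q_p/\Z_p$-valued duality to a $\Lambda(\Omega)$-valued one, which is where finiteness of $E(L_\infty)_{p^\infty}$ is used a second time, to kill the error between $\Hom_{\Z_p}(-,\Q_p/\Z_p)$ and $\Hom_{\Lambda(\Omega)}(-,\Lambda(\Omega))$ on the relevant modules.

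The main obstacle I expect is precisely the bookkeeping of these ``finite error'' terms — the $E(L)_{p^\infty}$ contributions and the cokernels in the Poitou--Tate sequence — and verifying that they vanish (or become negligible) after passing to the limit over $L\subset L_\infty$, so that the resulting map is genuinely injective and $\Lambda(\Omega)$-linear. The hypothesis that $\Omega$ is an infinite pro-$p$ group ensures $\Lambda(\Omega)$ is a local Noetherian domain with good duality properties, and that $L_\infty/k$ is a $\Z_p$-extension-like tower so that the transition maps are well-behaved; but checking injectivity (as opposed to merely obtaining a map) amounts to showing that an element of $\mathfrak{S}(E/L_\infty)$ pairing trivially with all of $\Sel(E/L_\infty)$ must vanish, which again reduces to the nondegeneracy of the layerwise Tate pairings together with the finiteness hypothesis. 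I would cite \cite[proof of Lemma 4.?]{CSS} and the corresponding argument in \cite{HV} for the false Tate curve case as the template, indicating that the argument there goes through verbatim in our $\Z_p^2$-setting once $L_\infty$ is taken inside $k_\infty$.
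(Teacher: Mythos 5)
The paper's ``proof'' of this proposition is a bare citation to \cite[Theorem 7.1]{HV}, whose argument is homological rather than pairing-theoretic: one applies the Jannsen/Ochi--Venjakob spectral sequences
$\operatorname{Ext}^i_{\Lambda(\Omega)}\bigl(H^j(G_S(L_\infty), E_{p^\infty})^\vee, \Lambda(\Omega)\bigr) \Rightarrow H^{i+j}_{\mathrm{Iw}}(L_\infty, T_pE)$
(together with their local analogues) and reads off the low-degree exact sequence. The edge map lands in $\Hom_{\Lambda(\Omega)}\bigl(H^1(G_S(L_\infty),E_{p^\infty})^\vee,\Lambda(\Omega)\bigr)$ with kernel $\operatorname{Ext}^1_{\Lambda(\Omega)}\bigl(E(L_\infty)_{p^\infty}^\vee,\Lambda(\Omega)\bigr)$; this $\operatorname{Ext}^1$ term vanishes precisely because $E(L_\infty)_{p^\infty}$ is finite while $\Lambda(\Omega)$ has depth at least two. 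Restricting to the Selmer conditions then gives the stated injection. Your proposal takes a genuinely different route, and the gap lies exactly at the step you call an ``upgrade.''

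The pairing machinery you describe would, at best, produce a $\Lambda(\Omega)$-module map $\mathfrak{S}(E/L_\infty)\to\Hom_{{\bf Z}_p}(\Sel(E/L_\infty),{\bf Q}_p/{\bf Z}_p)=X(E/L_\infty)$. But the target in the proposition is $\Hom_{\Lambda(\Omega)}(X(E/L_\infty),\Lambda(\Omega))$, and this is a fundamentally different $\Lambda(\Omega)$-module: it is always $\Lambda(\Omega)$-torsion-free, and in particular vanishes whenever $X(E/L_\infty)$ is $\Lambda(\Omega)$-torsion. That vanishing is exactly how the paper uses this proposition in the proof of Theorem \ref{cycloc}, to conclude $\mathfrak{S}(E/k^{\cyc})=0$ from Kato's torsion theorem; a map into $X(E/k^{\cyc})$ itself would yield nothing. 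There is therefore no ``familiar identification of continuous ${\bf Z}_p$-duals with $\Lambda(\Omega)$-linear duals,'' with or without finiteness of $E(L_\infty)_{p^\infty}$: the two functors $\Hom_{{\bf Z}_p}(-,{\bf Q}_p/{\bf Z}_p)$ and $\Hom_{\Lambda(\Omega)}(-,\Lambda(\Omega))$ differ by higher local cohomology, and bridging them is precisely the content of the spectral sequence your proposal avoids. What finiteness of $E(L_\infty)_{p^\infty}$ actually buys is the vanishing of the $\operatorname{Ext}^1$ edge term described above, not a duality identification. As a secondary point, your account of the finite-layer pairing is also off: the Cassels--Poitou--Tate sequence exhibits $\mathfrak{S}(E/L)^\vee$ as the cokernel of the localization map on $H^1(G_S(L),E_{p^\infty})$; it does not furnish a direct pairing of $\mathfrak{S}(E/L)$ against $\Sel(E/L)$, and these two groups are not mutually dual even up to finite error.
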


\begin{proof}
See for instance \cite[Theorem 7.1]{HV}. $\Box$ \end{proof} 
We use this to deduce the following result.

\begin{theorem}\label{cycloc}
If $E$ has good ordinary reduction at each prime above $p$ in $k$, then the cohomology group
$H^2(G_S(k^{\text{cyc}}), E_{p^{\infty}})$ vanishes. In particular, the
localization map \begin{align*} \lambda_S(k^{\text{cyc}}): H^1(G_S(k^{\text{cyc}}),
E_{p^{\infty}})\longrightarrow \bigoplus_{v \in S} J_v(k^{\text{cyc}})\end{align*} 
is surjective, and hence we have a short exact sequence of $\Lambda(\Gamma)$-modules
\begin{align}\label{locsurj} 0 \longrightarrow \Sel(E/k^{\text{cyc}})
\longrightarrow H^1(G_S(k^{\text{cyc}}),E_{p^{\infty}})
\longrightarrow \bigoplus_{v \in S} J_v(k^{\text{cyc}})
\longrightarrow 0.\end{align}\end{theorem}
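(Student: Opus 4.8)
The plan is to deduce everything from the vanishing of $H^2(G_S(k^{\cyc}), E_{p^\infty})$, which is the crux of the matter; the rest of the statement is a formal consequence via Poitou--Tate global duality. First I would prove the $H^2$-vanishing by a descent argument from the cyclotomic $\mathbf{Z}_p$-extension of $\mathbf{Q}$. By a standard shift, $H^2(G_S(k^{\cyc}), E_{p^\infty}) = \varinjlim_n H^2(G_S(k^{\cyc}), E_{p^n})$, and by Poitou--Tate this group is dual to the inverse limit of the compactified Selmer-type groups $\mathfrak{S}(E_{p^n}/k^{\cyc})$ attached to the dual module; concretely, global Euler characteristic arguments reduce the vanishing of $H^2(G_S(k^{\cyc}), E_{p^\infty})$ to showing that the Pontryagin dual of the relevant $H^2$ is both $\Lambda(\Gamma)$-torsion and has no finite nontrivial submodule, the latter following from the fact that $G_S(k^{\cyc})$ has no nontrivial finite quotient of $p$-power order acting suitably. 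The torsion statement is exactly where Theorem \ref{kato-rohrlich} (Kato--Rohrlich) enters: since $X(E/k^{\cyc})$ is $\Lambda(\Gamma)$-torsion, the corank of $H^1(G_S(k^{\cyc}), E_{p^\infty})$ matches the corank of $\bigoplus_{v\in S} J_v(k^{\cyc})$ forced by the global Euler characteristic formula (using that $E$ has good ordinary reduction at the primes above $p$, so that each local term $J_v(k^{\cyc})$ has the expected corank), and this equality of coranks, combined with $\Lambda(\Gamma)$ having global dimension two and the absence of nonzero finite submodules, forces the cokernel $H^2$ to vanish.

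Once $H^2(G_S(k^{\cyc}), E_{p^\infty}) = 0$ is established, surjectivity of $\lambda_S(k^{\cyc})$ is immediate: the Poitou--Tate nine-term exact sequence identifies the cokernel of the localization map $H^1(G_S(k^{\cyc}), E_{p^\infty}) \to \bigoplus_{v\in S} J_v(k^{\cyc})$ with (a piece controlled by) $H^2(G_S(k^{\cyc}), E_{p^\infty})$ together with the dual of $\mathfrak{S}(E/k^{\cyc})$; more precisely, the relevant portion of the Poitou--Tate sequence reads
\begin{align*}
H^1(G_S(k^{\cyc}), E_{p^\infty}) \longrightarrow \bigoplus_{v\in S} J_v(k^{\cyc}) \longrightarrow \mathfrak{S}(T_pE/k^{\cyc})^{\vee} \longrightarrow H^2(G_S(k^{\cyc}), E_{p^\infty}),
\end{align*}
and the vanishing of the last term together with the fact that $\mathfrak{S}(T_pE/k^{\cyc})^{\vee}$ surjects onto $H^2$ (so that, $H^2$ being zero, the connecting map out of $\bigoplus J_v$ is zero) gives the desired surjectivity. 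Alternatively, and perhaps more cleanly, I would phrase this directly: the cokernel of $\lambda_S(k^{\cyc})$ injects into $H^2(G_S(k^{\cyc}), E_{p^\infty}) = 0$.

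The short exact sequence $(\ref{locsurj})$ of $\Lambda(\Gamma)$-modules is then just the definition of $\Sel(E/k^{\cyc})$ as the kernel of $\lambda_S(k^{\cyc})$ together with the surjectivity just proved; all three terms are visibly $\Lambda(\Gamma)$-modules and the maps are $\Gamma$-equivariant, so there is nothing further to check. The main obstacle is the $H^2$-vanishing, and within that, the subtlety is the no-finite-submodule (equivalently, no-finite-quotient for the dual) input: one must ensure that $H^2(G_S(k^{\cyc}), E_{p^\infty})$, known to be $\Lambda(\Gamma)$-cotorsion by the Euler-characteristic bookkeeping, is actually zero and not merely finite. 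This is handled by observing that $H^2(G_S(k^{\cyc}), E_{p^\infty})$ is a quotient of $H^2$ terms that are $\Lambda(\Gamma)$-divisible (being inductive limits over the layers with surjective transition maps coming from the ordinary local conditions), and a divisible cotorsion $\Lambda(\Gamma)$-module is trivial; I would cite \cite{CSS} or \cite{HV} for the analogous statement in the false Tate setting and adapt the (routine) argument to $k^{\cyc}$.
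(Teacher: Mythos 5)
Your strategy inverts the logical dependency in a way that introduces a genuine gap. You propose to first prove $H^2(G_S(k^{\cyc}), E_{p^\infty}) = 0$ and then deduce surjectivity of $\lambda_S(k^{\cyc})$ from it, but that deduction does not go through. In the Cassels--Poitou--Tate exact sequence
\begin{align*}
0 \to \Sel(E/k^{\cyc}) \to H^1(G_S(k^{\cyc}), E_{p^\infty}) \to \bigoplus_{v\in S}J_v(k^{\cyc}) \to \mathfrak{S}(E/k^{\cyc})^{\vee} \to H^2(G_S(k^{\cyc}), E_{p^\infty}) \to 0,
\end{align*}
the cokernel of $\lambda_S(k^{\cyc})$ is the \emph{kernel} of $\mathfrak{S}(E/k^{\cyc})^{\vee} \to H^2$, while $H^2$ is a \emph{quotient} of $\mathfrak{S}(E/k^{\cyc})^{\vee}$. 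Knowing $H^2=0$ therefore only tells you that $\coker(\lambda_S)$ is \emph{all} of $\mathfrak{S}(E/k^{\cyc})^{\vee}$; it gives no reason for that group to vanish. Your sentence ``the cokernel of $\lambda_S(k^{\cyc})$ injects into $H^2(G_S(k^{\cyc}), E_{p^\infty})$'' misidentifies where $\coker(\lambda_S)$ sits in the sequence.

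The paper's proof avoids this by going after the right object directly: it shows that $\mathfrak{S}(E/k^{\cyc})$ itself is zero, which from the above sequence simultaneously kills $H^2$ and $\coker(\lambda_S)$. The mechanism is Proposition \ref{injection}: since $E(k^{\cyc})_{p^\infty}$ is finite by Imai's theorem (an input you do not use), there is an injection $\mathfrak{S}(E/k^{\cyc}) \hookrightarrow \Hom_{\Lambda(\Gamma)}(X(E/k^{\cyc}), \Lambda(\Gamma))$, and the target vanishes because $X(E/k^{\cyc})$ is $\Lambda(\Gamma)$-torsion by Kato--Rohrlich. Your Euler-characteristic/corank discussion is in the right spirit (it is closely related to the weak Leopoldt conjecture being equivalent to $X(E/k^{\cyc})$ being torsion), but the passage from ``$H^2$ is cotorsion'' to ``$H^2 = 0$'' via divisibility and ``no finite submodule'' is left unsubstantiated, and even if it were completed, it would still not yield surjectivity of $\lambda_S$ for the reason above. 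You should instead establish $\mathfrak{S}(E/k^{\cyc}) = 0$ and let both conclusions fall out of the Cassels--Poitou--Tate sequence at once.
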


\begin{proof} Consider the Cassels-Poitou-Tate exact sequence
\begin{align*}0 \longrightarrow \Sel(E/k^{\text{cyc}})
&\longrightarrow H^1(G_S(k^{\text{cyc}}),
E_{p^{\infty}}) \longrightarrow \bigoplus_{v \in S}J_v(k^{\text{cyc}}) \\
&\longrightarrow \mathfrak{S}(E/k^{\text{cyc}})^{\vee}\longrightarrow H^2(G_S(k^{\text{cyc}}), E_{p^{\infty}})
\longrightarrow 0.\end{align*} Here, $\mathfrak{S}(E/k^{\cyc})^{\vee}$ is the Pontryagin dual of $\mathfrak{S}(E/k^{\cyc})$. Now, the $p$-power
torsion subgroup $E(k^{\cyc})_{p^{\infty}}$ is finite by Imai's theorem \cite{Im}. Hence, we can invoke
Proposition \ref{injection} to obtain an injection $\mathfrak{S}(E/k^{\cyc}) \rightarrow 
\Hom_{\Lambda(\Gamma)}(X(E/k^{\cyc}), \Lambda(\Gamma))$. Now, by the main result of Kato
\cite{KK}, the dual Selmer group $X(E/k^{\cyc})$ is $\Lambda(\Gamma)$-torsion. Hence, we have
a $\Lambda(\Gamma)$-module injection \begin{align*}\mathfrak{S}(E/k^{\cyc}) &\hookrightarrow \Hom_{\Lambda(\Gamma)}(X(E/k^{\cyc}),
\Lambda(\Gamma)) =0. \end{align*} It follows that $\mathfrak{S}(E/k^{\cyc})^{\vee}=0$, and hence that
$H^2(G_S(k^{\cyc}), E_{p^{\infty}})=0$. See also the argument of Kato \cite[$\S\S 13, 14$]{KK} for this
latter vanishing. $\Box$\end{proof} Let us now consider invariants
under the Galois group $H = \Gal(k_{\infty}/k^{\cyc}).$ Note that by Serre's refinement \cite{Se3} of Lazard's 
theorem \cite{Laz}, a $p$-adic Lie group with no element of order $p$ has $p$-cohomological dimension $\cd_p$
equal to its dimension as a $p$-adic Lie group. Since $G$ has no element of order $p$, we can and will invoke
this characterization throughout. Hence (for instance), $\cd_p(G)=2$ with $\cd_p(H) = \cd_p(\Gamma) =1$. To 
show the main result of this paragraph, we first establish the following standard lemmas.

\begin{lemma}\label{Hloc}
If $E$ has good ordinary reduction at each prime above $p$ in $k$, then there is a short
exact sequence \begin{align*}\begin{CD} 0 @>>>\Sel(E/k_{\infty})^H @>>> H^1(G_S(k_{\infty}), 
E_{p^{\infty}})^H \\@. @>{\eta_S(k_{\infty})}>> \bigoplus_{v \in S}J_v(k_{\infty})^H @>>> 0.\end{CD}\end{align*} 
Here, $\eta_S(k_{\infty})$ is the map induced by localization map \begin{align*}\lambda_S(k_{\infty}): H^1(G_S(k_{\infty}), 
E_{p^{\infty}}) &\longrightarrow \bigoplus_S J_v(k_{\infty}).\end{align*}\end{lemma}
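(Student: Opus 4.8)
The plan is to take $H$-invariants of the short exact sequence \eqref{locsurj} established in Theorem \ref{cycloc} for the cyclotomic level, but read at the level of $k_\infty$, and then control the relevant cohomology of $H$. First I would record that $H = \Gal(k_\infty/k^{\cyc}) \cong {\bf{Z}}_p$, so that $\cd_p(H) = 1$ by the Serre--Lazard characterization cited just above; this is the structural input that will make everything collapse. Next I would invoke Theorem \ref{cycloc} to get $H^2(G_S(k^{\cyc}), E_{p^\infty}) = 0$, and then argue that $H^2(G_S(k_\infty), E_{p^\infty}) = 0$ as well: the Hochschild--Serre spectral sequence for $G_S(k_\infty) \triangleleft G_S(k^{\cyc})$ (note $\Gal(k^S/k^{\cyc})$ surjects onto $H$ with kernel $G_S(k_\infty)$) has $E_2^{i,j} = H^i(H, H^j(G_S(k_\infty), E_{p^\infty}))$, and since $\cd_p(H) = 1$ the terms with $i \geq 2$ vanish, giving $H^2(G_S(k^{\cyc}), E_{p^\infty}) \twoheadleftarrow H^1(H, H^1(G_S(k_\infty), E_{p^\infty}))$ up to the $H^0(H, H^2(G_S(k_\infty),E_{p^\infty}))$ term; a standard descent argument in the other direction (or directly, that $\cd_p(G_S(k_\infty)) \le \cd_p(G_S(k^{\cyc}))$ since $G_S(k_\infty) \le G_S(k^{\cyc})$, together with the fact that $E_{p^\infty}$ is $p$-divisible as a discrete module so $H^2$ of a group of $p$-cohomological dimension $\le 1$ over it vanishes) shows $H^2(G_S(k_\infty), E_{p^\infty}) = 0$.

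With that vanishing in hand, the Cassels--Poitou--Tate exact sequence at level $k_\infty$ reads
\begin{align*}
0 \longrightarrow \Sel(E/k_\infty) \longrightarrow H^1(G_S(k_\infty), E_{p^\infty}) \longrightarrow \bigoplus_{v \in S} J_v(k_\infty) \longrightarrow \mathfrak{S}(E/k_\infty)^\vee \longrightarrow 0,
\end{align*}
and I would then argue that $\mathfrak{S}(E/k_\infty) = 0$: since $X(E/k^{\cyc})$ is $\Lambda(\Gamma)$-torsion by Kato (Theorem \ref{kato-rohrlich}), it is a fortiori $\Lambda(G)$-torsion, so $\Hom_{\Lambda(G)}(X(E/k_\infty), \Lambda(G)) = 0$; since $E(k_\infty)_{p^\infty}$ is finite (by Imai's theorem, or by the argument already invoked for $k^{\cyc}$, as $k_\infty/k^{\cyc}$ is a ${\bf{Z}}_p$-extension), Proposition \ref{injection} applied with $L_\infty = k_\infty$, $\Omega = G$ gives an injection $\mathfrak{S}(E/k_\infty) \hookrightarrow \Hom_{\Lambda(G)}(X(E/k_\infty), \Lambda(G)) = 0$. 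Hence the localization map $\lambda_S(k_\infty)$ is surjective and we obtain a short exact sequence of $\Lambda(G)$-modules
\begin{align*}
0 \longrightarrow \Sel(E/k_\infty) \longrightarrow H^1(G_S(k_\infty), E_{p^\infty}) \longrightarrow \bigoplus_{v \in S} J_v(k_\infty) \longrightarrow 0.
\end{align*}

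Finally I would apply the left-exact functor $(-)^H$ to this short exact sequence. Left-exactness immediately gives exactness of
\begin{align*}
0 \longrightarrow \Sel(E/k_\infty)^H \longrightarrow H^1(G_S(k_\infty), E_{p^\infty})^H \longrightarrow \bigoplus_{v \in S} J_v(k_\infty)^H,
\end{align*}
and the cokernel of the last arrow injects into $H^1(H, \Sel(E/k_\infty))$. The remaining point — which I expect to be the main obstacle — is the surjectivity of $\eta_S(k_\infty)$, equivalently the vanishing (or at least the irrelevance) of the connecting map into $H^1(H, \Sel(E/k_\infty))$. I would deduce this from the surjectivity of $H^1(G_S(k_\infty), E_{p^\infty}) \to \bigoplus_v J_v(k_\infty)$ combined with $\cd_p(H) = 1$: in the long exact $H$-cohomology sequence the relevant term to kill is $H^1(H, H^1(G_S(k_\infty), E_{p^\infty}))$ mapping to $H^1(H, \bigoplus_v J_v(k_\infty))$, and more directly one uses that taking $H$-invariants of a surjection of discrete $p$-primary $\Lambda(G)$-modules whose kernel has the appropriate cofiniteness/corank properties remains surjective because the obstruction lies in $H^1(H, \ker)$ which, together with the Hochschild--Serre comparison between $G_S(k^{\cyc})$- and $H$-on-$G_S(k_\infty)$-cohomology and the already-known sequence \eqref{locsurj} at cyclotomic level, is forced to vanish. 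This is precisely the style of argument in \cite[Lemma]{CSS} and \cite[\S 7]{HV}, and I would cite those for the cohomological bookkeeping rather than reproduce it in full.
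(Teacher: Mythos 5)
Your proposal has a genuine circularity. You want to show that $\lambda_S(k_\infty)$ is already surjective on the nose, and for that you argue $\mathfrak{S}(E/k_\infty) = 0$ by pushing it into $\Hom_{\Lambda(G)}(X(E/k_\infty), \Lambda(G))$ and claiming this vanishes. But that vanishing requires $X(E/k_\infty)$ to be $\Lambda(G)$-torsion, and the sentence you offer for this (``$X(E/k^{\cyc})$ is $\Lambda(\Gamma)$-torsion by Kato, so a fortiori \ldots'') conflates $X(E/k^{\cyc})$ with $X(E/k_\infty)$: these are different modules, and the torsionness of the latter is precisely Theorem \ref{torsion}, which in the paper is proved \emph{after} Lemma \ref{Hloc} and relies on it. There is a second circularity in your final paragraph: once you have the three-term sequence over $k_\infty$, the surjectivity of $\eta_S(k_\infty)$ is equivalent to the connecting map $\bigoplus_v J_v(k_\infty)^H \to H^1(H,\Sel(E/k_\infty))$ being zero, and the only clean way to kill that is $H^1(H,\Sel(E/k_\infty))=0$ — which is Lemma \ref{H1vanish}, again proved downstream of Lemma \ref{Hloc}.

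The paper's argument is structured to avoid exactly this. Following \cite[Lemma 2.3]{CSS}, one writes down the ``fundamental diagram'' comparing the sequence $(\ref{locsurj})$ over $k^{\cyc}$ (already known to be short exact by Theorem \ref{cycloc}) with the sequence of $H$-invariants over $k_\infty$, via the restriction maps. The heart of the proof is purely local: one shows the vertical restriction map $\gamma_S(k^{\cyc}) = \bigoplus_w \gamma_w(k^{\cyc})$ on the $J_v$'s has vanishing cokernel, using $\cd_p(\Omega_w)\le 1$ and (at $w\mid p$) the Coates--Greenberg deep-ramification result. Then $\gamma_S(k^{\cyc})\circ\lambda_S(k^{\cyc})$ is a surjection onto $\bigoplus_v J_v(k_\infty)^H$ that factors through $\eta_S(k_\infty)$, so $\eta_S(k_\infty)$ is surjective — no input from $\Lambda(G)$-torsionness or from $H^1(H,\Sel)$ is needed. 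If you want to keep your Hochschild--Serre analysis of $H^2(G_S(k_\infty), E_{p^\infty})$, it is a correct and potentially useful observation, but it does not substitute for the local cokernel computation, which is the actual content of the lemma.
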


\begin{proof} See \cite[Lemma 2.3]{CSS}. That is,
consider the fundamental diagram $$\begin{CD}
 0 @>>> \Sel(E/k_{\infty})^{H} @>>> H^1(G_S(k_{\infty}),
E_{p^{\infty}})^{H} @>{\eta_S(k_{\infty})}>>
\bigoplus_{v \in S}J_v(k_{\infty})^{H} \\
@.        @AAA              @AAA
@AA{\gamma_S(k^{\text{cyc}})}A     @. \\
0 @>>> \Sel(E/k^{\text{cyc}}) @>>> H^1(G_S(k^{\text{cyc}}),
E_{p^{\infty}})
@>{\lambda_S(k^{\text{cyc}})}>> \bigoplus_{v \in S}J_v(k^{\text{cyc}}).\\
\end{CD}$$ Here, the horizontal rows are exact, and the vertical arrows 
are induced by restriction on cohomology. We have that $$\coker(\gamma_S(k^{\text{cyc}})) =
\bigoplus_{w |v\in S} \coker(\gamma_w(k^{\text{cyc}})),$$ with $w$
ranging over places in $k^{\cyc}$ above $ v \in S$. Note that only
finitely many such primes exist, as no finite prime splits
completely in $k^{\cyc}$ (see for instance \cite[Theorem 2.13]{Wash}). 
Given a prime $w$  above $ v$ in $k_{\infty}$, let $\Omega_w$ denote the
decomposition subgroup of $H$ at $w$. Note that $\cd_p \left( \Omega_w \right) \leq 1$, 
and so $H^2(\Omega_w, E_{p^{\infty}})=0.$ If $w \nmid p$, then standard arguments (see
for instance \cite[Lemma 3.7]{C}) show that
$$\coker(\gamma_w(k^{\text{cyc}})) = H^2(\Omega_w, E_{p^{\infty}})
=0.$$ If $w|p$, then the main result of Coates-Greenberg \cite{CG} shows
that $$\coker(\gamma_w(k^{\text{cyc}})) = H^2(\Omega_w,
\widetilde{E}_{w, p^{\infty}}) =0.$$ Here, $\widetilde{E}_{w,
p^{\infty}}$ denotes the image under reduction modulo $w$ of
$E_{p^{\infty}}.$ Hence, we find that $\coker(\gamma_w(k^{\text{cyc}})) =0$
for each prime $w$ above $v$ in $k_{\infty}$. It follows that $\gamma_S(k^{\text{cyc}})$ is
surjective. Since the map $\lambda_S(k^{\text{cyc}})$ is also
surjective by $(\ref{locsurj})$, it follows that
$\eta_S(k_{\infty})$ is surjective as required. $\Box$
\end{proof}

\begin{lemma}\label{hsss}
If $E$ has good ordinary reduction at each prime above $p$ in $k$, then for all $i\geq 1$,
$H^i(H, H^1(G_S(k_{\infty}), E_{p^{\infty}})) = 0.$
\end{lemma}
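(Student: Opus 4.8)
The plan is to feed the vanishing $H^2(G_S(k^{\cyc}), E_{p^{\infty}}) = 0$ supplied by Theorem \ref{cycloc} through the Hochschild--Serre spectral sequence attached to the group extension
$$1 \longrightarrow G_S(k_{\infty}) \longrightarrow G_S(k^{\cyc}) \longrightarrow H \longrightarrow 1,$$
which makes sense since $k_{\infty} \subseteq k^S$ and $H = \Gal(k_{\infty}/k^{\cyc})$. First dispose of the range $i \geq 2$: since $H \cong {\bf{Z}}_p$ has $\cd_p(H) = 1$ (by the characterization recalled above, via Serre's refinement of Lazard's theorem) and $H^1(G_S(k_{\infty}), E_{p^{\infty}})$ is a discrete $p$-primary torsion $\Lambda(H)$-module, one has $H^i(H, H^1(G_S(k_{\infty}), E_{p^{\infty}})) = 0$ for every $i \geq 2$. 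Thus the whole content is the case $i = 1$.

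For $i = 1$, consider the spectral sequence
$$E_2^{r,q} = H^r\bigl(H, H^q(G_S(k_{\infty}), E_{p^{\infty}})\bigr) \Longrightarrow H^{r+q}(G_S(k^{\cyc}), E_{p^{\infty}}).$$
Because $\cd_p(H) = 1$, the only nonzero columns are $r = 0$ and $r = 1$, so every differential $d_s$ with $s \geq 2$ necessarily vanishes (it would land in a column of index $\geq 2$) and the spectral sequence degenerates at $E_2$. The induced filtration on the total cohomology in degree $2$ then yields a short exact sequence
$$0 \longrightarrow H^1\bigl(H, H^1(G_S(k_{\infty}), E_{p^{\infty}})\bigr) \longrightarrow H^2(G_S(k^{\cyc}), E_{p^{\infty}}) \longrightarrow H^2(G_S(k_{\infty}), E_{p^{\infty}})^H \longrightarrow 0.$$
By Theorem \ref{cycloc} the middle term is zero, hence so is the left-hand term, which is exactly the assertion for $i = 1$.

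I do not anticipate a genuine obstacle here. The substantive input — the vanishing of $H^2$ along the cyclotomic line, which rests on Kato's theorem together with Proposition \ref{injection} — has already been secured in Theorem \ref{cycloc}, and the present lemma is a purely homological consequence obtained from the two-column collapse of the spectral sequence. The only points demanding a little care are the bookkeeping of the spectral-sequence filtration (ensuring the relevant edge terms are precisely $H^1(H,-)$ and $H^0(H,-)$ of the indicated groups) and the verification that the $\cd_p(H) = 1$ bound applies to $p$-primary discrete coefficient modules. This argument runs parallel to the standard one in Coates--Sujatha--Schneider \cite{CSS}.
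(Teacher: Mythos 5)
Your argument is correct and is exactly the route the paper takes by reference: its proof of Lemma~\ref{hsss} simply cites Coates--Schneider--Sujatha's Lemma~2.4 and notes that the same proof works here using Theorem~\ref{cycloc} together with $\cd_p(H)=1$, which is precisely the two-column Hochschild--Serre degeneration you have written out (the exact sequence $0 \to H^1(H, H^1(G_S(k_{\infty}),E_{p^{\infty}})) \to H^2(G_S(k^{\cyc}),E_{p^{\infty}}) \to H^2(G_S(k_{\infty}),E_{p^{\infty}})^H \to 0$ plus the vanishing of the middle term). Nothing is missing.
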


\begin{proof} See \cite[Lemma 2.4]{CSS}. The same proof works here,
using Theorem \ref{cycloc} with the fact that $\cd_p \left( H \right) =1$. $\Box$
\end{proof}

\begin{lemma}\label{H1vanish}
If $E$ has good ordinary reduction at each prime above $p$ in $k$, then 
$H^1(H,\Sel(E/k_{\infty}))=0.$ \end{lemma}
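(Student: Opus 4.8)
The plan is to prove $H^1(H, \Sel(E/k_{\infty})) = 0$ by exploiting the Hochschild-Serre spectral sequence for the extension $k_{\infty}/k^{\cyc}$ applied to the group cohomology $H^1(G_S(k_{\infty}), E_{p^{\infty}})$, together with Lemma \ref{Hloc} and Lemma \ref{hsss}. The key observation is that $\Sel(E/k_{\infty})$ sits inside $H^1(G_S(k_{\infty}), E_{p^{\infty}})$ as the kernel of the localization map $\lambda_S(k_{\infty})$ whose image, by the short exact sequence defining the Selmer group, is the relevant piece of $\bigoplus_{v \in S} J_v(k_{\infty})$; taking the long exact sequence in $H$-cohomology associated to
\begin{align*}
0 \longrightarrow \Sel(E/k_{\infty}) \longrightarrow H^1(G_S(k_{\infty}), E_{p^{\infty}}) \longrightarrow \operatorname{im}(\lambda_S(k_{\infty})) \longrightarrow 0
\end{align*}
will relate $H^1(H, \Sel(E/k_{\infty}))$ to $H^1(H, H^1(G_S(k_{\infty}), E_{p^{\infty}}))$, which vanishes by Lemma \ref{hsss}, and to the cokernel at the $H$-invariants level, namely $\operatorname{coker}\left( \eta_S(k_{\infty}) : H^1(G_S(k_{\infty}), E_{p^{\infty}})^H \to \bigoplus_{v \in S} J_v(k_{\infty})^H \right)$.

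First I would write down the long exact sequence of $H$-cohomology attached to the above short exact sequence. The relevant segment reads
\begin{align*}
\cdots \longrightarrow H^1(G_S(k_{\infty}), E_{p^{\infty}})^H \longrightarrow \operatorname{im}(\lambda_S(k_{\infty}))^H \longrightarrow H^1(H, \Sel(E/k_{\infty})) \longrightarrow H^1(H, H^1(G_S(k_{\infty}), E_{p^{\infty}})) \longrightarrow \cdots
\end{align*}
Second, Lemma \ref{hsss} gives that the last term $H^1(H, H^1(G_S(k_{\infty}), E_{p^{\infty}}))$ vanishes, so $H^1(H, \Sel(E/k_{\infty}))$ is a quotient of $\operatorname{coker}\left( H^1(G_S(k_{\infty}), E_{p^{\infty}})^H \to \operatorname{im}(\lambda_S(k_{\infty}))^H \right)$. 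Third, I would identify $\operatorname{im}(\lambda_S(k_{\infty}))^H$ with (a submodule of) $\bigoplus_{v \in S} J_v(k_{\infty})^H$ and invoke Lemma \ref{Hloc}, which asserts precisely that the induced map $\eta_S(k_{\infty})$ from $H^1(G_S(k_{\infty}), E_{p^{\infty}})^H$ onto $\bigoplus_{v \in S} J_v(k_{\infty})^H$ is surjective. One needs to check compatibly that $\operatorname{im}(\lambda_S(k_{\infty}))^H$ is not strictly larger than what $\eta_S(k_{\infty})$ hits — this follows because $\operatorname{im}(\lambda_S(k_{\infty}))$ is an $H$-submodule of $\bigoplus_v J_v(k_{\infty})$ and taking $H$-invariants is left exact, so $\operatorname{im}(\lambda_S(k_{\infty}))^H \subseteq \bigoplus_v J_v(k_{\infty})^H$ coincides with the image of $\eta_S$. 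Hence the cokernel vanishes and $H^1(H, \Sel(E/k_{\infty})) = 0$.

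\textbf{The main obstacle} I expect is the bookkeeping needed to match up the $H$-invariants of the image $\operatorname{im}(\lambda_S(k_{\infty}))$ with the target of the map $\eta_S(k_{\infty})$ appearing in Lemma \ref{Hloc}: one must be careful that the diagram chase identifies the relevant connecting map correctly, and in particular that Lemma \ref{Hloc}'s surjectivity statement is really the surjectivity of $H^1(G_S(k_{\infty}), E_{p^{\infty}})^H \twoheadrightarrow \operatorname{im}(\lambda_S(k_{\infty}))^H$ and not merely surjectivity onto something larger. Since Lemma \ref{Hloc} was proven by a fundamental-diagram chase that establishes exactly this (the bottom row is $\lambda_S(k^{\cyc})$ surjective by Theorem \ref{cycloc}, and the cokernels of the local restriction maps vanish), the identification goes through. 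This argument is the exact analogue of \cite[Lemma 2.5 or Proposition 2.6]{CSS}, so in the writeup I would simply cite \cite{CSS} for the formal mechanism and note that Lemma \ref{Hloc} and Lemma \ref{hsss} supply the two inputs required in our setting, where $\cd_p(H) = 1$.
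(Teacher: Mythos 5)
Your proposal is correct and follows exactly the paper's own argument: form the short exact sequence with $A_\infty = \operatorname{im}(\lambda_S(k_\infty))$, take $H$-cohomology, use Lemma \ref{hsss} to kill $H^1(H, H^1(G_S(k_\infty), E_{p^\infty}))$, and then use the surjectivity from Lemma \ref{Hloc} (noting that $\eta_S(k_\infty)$ factors through $A_\infty^H$, forcing $A_\infty^H = \bigoplus_{v\in S} J_v(k_\infty)^H$) to conclude the relevant cokernel vanishes. The bookkeeping concern you flag is handled in the paper by the same observation you make, namely that $A_\infty^H$ must equal the full target $\bigoplus_v J_v(k_\infty)^H$.
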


\begin{proof} See \cite[Lemma 2.5]{CSS}.
Let $A_{\infty} = \text{Im}(\lambda_S(k_{\infty})).$ Lemma
$\ref{hsss}$ with $i=1$ gives the exact sequence \begin{align}\label{Ainfty} 0
\longrightarrow \Sel(E/k_{\infty})^H \longrightarrow
H^1(G_S(k_{\infty}), E_{p^{\infty}})^H \longrightarrow A_{\infty}^H
\longrightarrow H^1(H, \Sel(E/k_{\infty})) \longrightarrow 0. \end{align}
Recall that the map $\eta_S(k_{\infty}): H^1(G_S(k_{\infty}), E_{p^{\infty}})^H 
\longrightarrow A_{\infty}^H$ is surjective by Lemma $\ref{Hloc}.$ Now,
\begin{align*} A_{\infty}^{H} = \bigoplus_{v \in S}J_v(k_{\infty})^H, \end{align*} 
and so it follows that $H^1(H, S(E/k_{\infty}))=0$. $\Box$ \end{proof}

\begin{lemma}\label{Jvanish}If $E$ has good ordinary reduction at each prime above $p$ in $k$, 
then \newline $H^1(H, \bigoplus_{v \in S} J_v(k_{\infty}))=0.$ \end{lemma}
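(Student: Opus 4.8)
The plan is to reduce the global cohomology $H^1(H, \bigoplus_{v\in S}J_v(k_\infty))$ to a sum of local cohomology groups over decomposition groups, and then to kill each local term by a cohomological-dimension argument, exactly in the spirit of \cite[Lemma 2.6]{CSS} and of the computations in the proof of Lemma \ref{Hloc} above. First I would use the fact that $J_v(k_\infty) = \bigoplus_{w\mid v}H^1(k_{\infty,w}, E(\overline{k}_w))(p)$, where $w$ runs over the (finitely many) primes of $k_\infty$ above $v$, and that $H$ permutes these primes. By Shapiro's lemma applied to the induced module structure, $H^1\bigl(H, \bigoplus_{w\mid v}H^1(k_{\infty,w},E)(p)\bigr)$ decomposes as a sum, over the $H$-orbits of primes $w$, of $H^1\bigl(\Omega_w, H^1(k_{\infty,w},E(\overline{k}_w))(p)\bigr)$, where $\Omega_w\subseteq H$ is the decomposition subgroup at $w$ and $k_{\infty,w}$ is the completion. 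So it suffices to show each of these $\Omega_w$-cohomology groups vanishes.

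Next I would split into the two cases $w\nmid p$ and $w\mid p$, as in Lemma \ref{Hloc}. For $w\nmid p$, the standard identification (see \cite[Lemma 3.7]{C}) gives $H^1(\Omega_w, H^1(k_{\infty,w},E)(p)) = H^2(\Omega_w, E_{p^\infty})$, and this vanishes since $\cd_p(\Omega_w)\le \cd_p(H) = 1$. For $w\mid p$, I would invoke Coates--Greenberg \cite{CG}: the local condition at $w$ is governed by $\widetilde{E}_{w,p^\infty}$, the reduction, and the relevant group is $H^1(\Omega_w, H^1(k_{\infty,w},\widetilde{E}_{w,p^\infty}))$, which one identifies with an $H^2$ of $\Omega_w$ with coefficients in $\widetilde{E}_{w,p^\infty}$ (or a related finite-corank module), again vanishing because $\cd_p(\Omega_w)\le 1$. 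In both cases the key input is simply that $H\cong\mathbf{Z}_p$ has $p$-cohomological dimension $1$, so its closed subgroups $\Omega_w$ do too, forcing all $H^2$'s with $p$-torsion coefficients to vanish.

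The main obstacle, such as it is, will be bookkeeping rather than any genuine difficulty: one must be careful that only finitely many primes $w$ of $k^{\cyc}$ (hence of $k_\infty$) lie above each $v\in S$, which holds because no finite prime splits completely in $k^{\cyc}$ (cf. \cite[Theorem 2.13]{Wash}), so that the direct sums are finite and the Shapiro-lemma reduction is legitimate; and one must correctly match the local Selmer condition at $w\mid p$ with the Coates--Greenberg description of the universal norm subgroup, so that the coefficient module is the reduced one $\widetilde{E}_{w,p^\infty}$ and the cohomological-dimension bound applies. Given Theorem \ref{cycloc}, Lemma \ref{Hloc}, and the cohomological dimension facts already recorded, the argument is then immediate, and I would simply refer to \cite[Lemma 2.6]{CSS} for the parallel computation, noting that the same proof works verbatim in the present $\mathbf{Z}_p^2$-setting.
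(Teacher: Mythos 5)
Your approach is correct and matches the paper's, which simply cites \cite[Lemma 2.8]{CSS} (you wrote Lemma 2.6 from memory; the paper points to 2.8) and observes that the same proof carries over because $\cd_p(H)=1$. Your Shapiro-lemma reduction to the decomposition groups $\Omega_w \subseteq H$, followed by the cohomological-dimension argument (with Coates--Greenberg supplying the correct coefficient module $\widetilde{E}_{w,p^\infty}$ at $w \mid p$), is precisely the CSS argument being invoked.
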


\begin{proof} See \cite[Lemma 2.8]{CSS}. The same proof works here, using the
fact that $\cd_p \left( H \right) =1$. $\Box$ \end{proof} We may now deduce the following result.

\begin{theorem} \label{torsion} If $E$ has good ordinary reduction at each prime above $p$ in $k$, 
then $X(E/k_{\infty})$ is $\Lambda(G)$-torsion.\end{theorem}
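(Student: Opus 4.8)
The plan is to deduce $\Lambda(G)$-torsionness of $X(E/k_\infty)$ from the already-established $\Lambda(\Gamma)$-torsionness of $X(E/k^{\cyc})$ (Theorem \ref{kato-rohrlich}) by taking $H$-invariants and applying a Nakayama-type descent argument along the $\mathbf{Z}_p$-extension $k_\infty/k^{\cyc}$. First I would assemble the exact sequences built in Lemmas \ref{Hloc}--\ref{Jvanish}: the sequence of $\Lambda(\Gamma)$-modules
\begin{align*}
0 \longrightarrow \Sel(E/k_\infty)^H \longrightarrow H^1(G_S(k_\infty), E_{p^\infty})^H \longrightarrow \bigoplus_{v \in S} J_v(k_\infty)^H \longrightarrow 0
\end{align*}
together with the vanishing $H^1(H, \Sel(E/k_\infty)) = 0$. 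Dualizing, the Pontryagin dual of $\Sel(E/k_\infty)^H$ is the $H$-coinvariants $X(E/k_\infty)_H$, so I would argue that $X(E/k_\infty)_H$ is a quotient of (the dual of) $H^1(G_S(k_\infty), E_{p^\infty})^H$, and compare it with $X(E/k^{\cyc})$.

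The key step is to show $X(E/k_\infty)_H$ is $\Lambda(\Gamma)$-torsion; then by the topological Nakayama lemma (using that $\Lambda(G) = \Lambda(\Gamma)[[H]]$ and $H \cong \mathbf{Z}_p$), a finitely generated $\Lambda(G)$-module whose $H$-coinvariants are $\Lambda(\Gamma)$-torsion is itself $\Lambda(G)$-torsion — more precisely, if $X(E/k_\infty)_H$ has rank $0$ over $\Lambda(\Gamma)$ then $X(E/k_\infty)$ has rank $0$ over $\Lambda(G)$, since the $\Lambda(G)$-rank can be computed after base change to the quotient field and reduction modulo the augmentation ideal of $H$. To get the $\Lambda(\Gamma)$-torsionness of $X(E/k_\infty)_H$, I would relate $\Sel(E/k_\infty)^H$ to $\Sel(E/k^{\cyc})$ via the fundamental restriction diagram in the proof of Lemma \ref{Hloc}: the restriction map $\Sel(E/k^{\cyc}) \to \Sel(E/k_\infty)^H$ has kernel and cokernel controlled by $H^1(H, E(k_\infty)_{p^\infty})$, $H^1$ of local terms, and the cokernels $\coker(\gamma_w(k^{\cyc}))$ — all of which were shown to vanish or be cofinitely generated in that proof (using Coates–Greenberg at $p$ and $\cd_p(H) = 1$ away from $p$, plus finiteness of $E(k^{\cyc})_{p^\infty}$ by Imai). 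So $\Sel(E/k^{\cyc})$ and $\Sel(E/k_\infty)^H$ differ by groups that are cofinitely generated over $\mathbf{Z}_p$, whence their duals differ by finitely generated $\mathbf{Z}_p$-modules, which are $\Lambda(\Gamma)$-torsion; since $X(E/k^{\cyc})$ is $\Lambda(\Gamma)$-torsion by Kato–Rohrlich, so is $X(E/k_\infty)_H$.

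I expect the main obstacle to be the bookkeeping in the descent diagram: carefully verifying that the restriction map $\Sel(E/k^{\cyc}) \to \Sel(E/k_\infty)^H$ really does have $\mathbf{Z}_p$-cofinitely-generated kernel and cokernel, which requires combining Lemma \ref{H1vanish} (giving $H^1(H,\Sel(E/k_\infty))=0$, hence surjectivity onto the relevant piece), the snake lemma applied to the fundamental diagram, and the local analysis already done in Lemma \ref{Hloc}. A secondary subtlety is making the Nakayama/rank-descent argument rigorous over the two-dimensional ring $\Lambda(G)$: one should note $X(E/k_\infty)$ is finitely generated over $\Lambda(G)$ (since $H^1(G_S(k_\infty), E_{p^\infty})$ is cofinitely generated over $\Lambda(G)$, as $G_S(k_\infty)$ has finite cohomological dimension and $E_{p^\infty}$ is cofinitely generated), and then invoke that $X(E/k_\infty)_H$ being $\Lambda(\Gamma)$-torsion forces the generic rank of $X(E/k_\infty)$ over $\Lambda(G)$ to be zero. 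Everything else — exactness of the dualized sequences, identification of $(\Sel^H)^\vee$ with $X_H$ — is routine.
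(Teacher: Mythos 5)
Your proof is correct, but it follows a genuinely different path than the paper's. The paper, citing \cite[Remark 2.5, Theorem 2.8, Corollary 2.9]{HV}, reduces the torsion claim to the surjectivity of $\lambda_S(k_{\infty})$, and then proves surjectivity by a pure $H$-cohomology computation: $H^1(H,A_{\infty})\cong H^2(H,\Sel(E/k_{\infty}))=0$ since $\cd_p(H)=1$, then $B_{\infty}^H=0$ from Lemma \ref{Hloc} and Lemma \ref{Jvanish}, and finally $B_{\infty}=0$ because $H$ is pro-$p$. You instead argue in the style of \cite[Proposition 2.9]{CSS}: you directly compare $X(E/k_{\infty})_H$ with $X(E/k^{\cyc})$ via the snake lemma on the restriction diagram (showing the dual of $\alpha$ has finitely generated ${\bf Z}_p$-kernel and cokernel, since $\ker\beta=H^1(H,E(k_{\infty})_{p^{\infty}})$ is cofinitely generated and $\coker\beta=0$ by $\cd_p(H)=1$, while $\ker\gamma$ is cofinitely generated and $\coker\gamma=0$ as in Lemma \ref{Hloc}), deduce that $X(E/k_{\infty})_H$ is $\Lambda(\Gamma)$-torsion from Kato--Rohrlich, and then lift via a rank descent. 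Both routes ultimately draw on the same cyclotomic input and the same local cohomology computations, but the paper's proof treats the surjectivity of the localization map as the primary invariant, whereas yours treats the $H$-coinvariants as the primary invariant; the descent route is closer to what the paper does later in the proof of Theorem \ref{mhg}.

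One step you state a bit loosely is the rank-descent: ``the $\Lambda(G)$-rank can be computed after base change to the quotient field and reduction modulo the augmentation ideal of $H$.'' The cleanest justification is to localize $\Lambda(G)\cong{\bf Z}_p[[T_1,T_2]]$ at the height-one prime $(T_2)=(\gamma_2-1)$: this is a DVR with residue field $\mathrm{Frac}(\Lambda(\Gamma))$, and if $X_{(T_2)}\cong\Lambda(G)_{(T_2)}^r\oplus T$ with $T$ torsion, then $r=\mathrm{rank}_{\Lambda(G)}X$ while $\dim_{\mathrm{Frac}(\Lambda(\Gamma))}(X_H)_{(T_2)}=r+(\text{number of cyclic factors of }T)\geq r$. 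Hence $\mathrm{rank}_{\Lambda(\Gamma)}X_H=0$ forces $\mathrm{rank}_{\Lambda(G)}X=0$. (This is equivalent to invoking Howson's formula $\mathrm{rank}_{\Lambda(G)}X=\mathrm{rank}_{\Lambda(\Gamma)}X_H-\mathrm{rank}_{\Lambda(\Gamma)}X^H$.) Also note that Lemma \ref{Hloc} as stated only controls the cokernels $\coker(\gamma_w)$; you also need finiteness of the kernels $\ker(\gamma_w)$, which the paper establishes explicitly only in the proof of Theorem \ref{mhg}, though the argument is the same.
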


\begin{proof} See the arguments of \cite[Theorem 2.8, and Corollary 2.9]{HV}, following \cite[Proposition 2.9]{CSS}. 
A standard deduction, as given for instance in \cite[ $\S 2$, Remark 2.5]{HV}, 
reduces the claim to showing the surjectivity of the localization map \begin{align*}
\lambda_S(k_{\infty}): H^1(G_S(k_{\infty}), E_{p^{\infty}}) &\longrightarrow \bigoplus J_v(k_{\infty}).
\end{align*} So, let $A_{\infty} = \im(\lambda_S(k_{\infty})).$ Taking the $H$-cohomology of  the 
exact sequence \begin{align*} 0 \longrightarrow \Sel(E/k_{\infty})\longrightarrow H^1(G_S(k_{\infty}), 
E_{p^{\infty}}) \longrightarrow A_{\infty} \longrightarrow 0, \end{align*} we obtain from Lemma 
\ref{hsss}  the identification \begin{align*} H^1(H, A_{\infty}) = H^2(H, \Sel(E/k_{\infty})).\end{align*} 
Note that since $\cd_p(H)=1$, we have that $H^2(H,\Sel(E/k_{\infty}))=0$, and hence that
$H^1(H, A_{\infty}) =0$. Let $B_{\infty} = \coker(\lambda_S(k_{\infty})).$ By Lemma $(\ref{Jvanish})$,
\begin{align*} H^1(H, \bigoplus_{v \in S}J_v(k_{\infty}))=0. \end{align*}Taking $H$-cohomology of the 
exact sequence \begin{align*}0 \longrightarrow A_{\infty} \longrightarrow \bigoplus_{v \in S}J_v(k_{\infty}) 
\longrightarrow B_{\infty} \longrightarrow 0, \end{align*} we deduce from Lemma $\ref{Hloc}$ that
\begin{align*} B_{\infty}^{H} = H^1(H, A_{\infty}) =0. \end{align*} Since $H$ is pro-$p$,
and $B_{\infty}$ a discrete $p$-primary $H$-module, it follows that
$B_{\infty}$ itself must vanish. Hence $\lambda_S(k_{\infty})$ is surjective. $\Box$
\end{proof} When $X(E/k_{\infty})$ is $\Lambda(G)$-torsion,
the structure theory of torsion $\Lambda(G)$-modules (\cite[Chapter VII,
$\S 4.5$]{BB}) gives a pseudoisomorphism 
\begin{align}\label{2vs} X(E/k_{\infty})
\longrightarrow  \bigoplus_{i=1}^t \Lambda(G)/p^{a_i} \oplus
\bigoplus_{j=1}^u \Lambda(G)/g_j^{b_j}.\end{align} Here, the indices $s$, $t$, $a_i$ and $b_j$ are
all positive integers, and each $g_j$ can be viewed as an irreducible monic
distinguished polynomial $g_j(T_1, T_2)$ (with respect to a fixed isomorphism
$\Lambda(G) \cong {\bf{Z}}_p[[T_1, T_2]]$). The characteristic power series \begin{align*}
\operatorname{char}_{\Lambda(G)} X(E/k_{\infty}) &= \prod_{i=1}^t p^{a_i} \cdot \prod_{j=1}^u g_j^{b_j}
\end{align*} is again well defined up to unit in $\Lambda(G)$. As in the cyclotomic setting, one uses it 
to define the $\Lambda(G)$-module invariants
\begin{align*} \mu_{\Lambda(G)}(X(E/k_{\infty})) = \sum_{i=1}^t a_i ~~~\text{and~~}
\lambda_{\Lambda(G)}\left(X(E/k_{\infty})\right) = \sum_{j=1}^u b_j \cdot
\deg(g_j).\end{align*} \end{remark}

\begin{remark}[The invariant $ \mu_{\Lambda(G)}(X(E/k_{\infty})) $.]

Let us now review what is known about the invariant
$\mu_{\Lambda(G)}(X(E/k_{\infty}))$. Suppose more generally that $G$
is any pro-$p$ group, and $Y$ any finitely-generated torsion
$\Lambda(G)$-module. The structure theory of $\Lambda(G)$ modules
shown in \cite[Chapter VII, $\S 4.5$]{BB}) again gives a
pseudoisomorphism analogous to $(\ref{2vs})$, and so we may define
the associated invariant $\mu_{\Lambda(G)}(Y).$ Let $Y(p)$ denote the submodule
of elements of $Y$ annihilated by some power of $p$. It is well
known (see for instance \cite{H}) that the cohomology groups $H^i(G,
Y)$ are finitely-generated ${\bf{Z}}_p$-modules for all $i \geq 0$,
and hence that the cohomology groups $H^i(G, Y(p))$ are finite for all $i \geq 0.$ 
The invariant $\mu_{\Lambda(G)}(Y)$ is then seen to be given by the
formula \begin{align}\label{muformula}p^{\mu_{\Lambda(G)}(Y)} =
\prod_{i \geq 0} \lvert H_i(G, Y(p))\rvert^{(-1)^i} = \chi(G,
Y(p)),\end{align} where $\chi(G, Y(p))$ is by definition the finite
$G$-Euler characteristic of $Y(p)$. Given $L$ any extension of $k$
contained in $k_{\infty}$, let us write $$\mathfrak{X}(E/L) = X(E/L)/X(E/L)(p).$$ 

\begin{proposition} \label{generalmu}
If $E$ has good ordinary reduction at $p$, and
$\mathfrak{X}(E/k_{\infty})$ is finitely-generated over
$\Lambda(H)$, then $\mu_{\Lambda(G)}(X(E/k_{\infty})) = \mu_E(k)$.
\end{proposition}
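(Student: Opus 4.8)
The plan is to compute $\mu_{\Lambda(G)}\!\left( X(E/k_{\infty}) \right)$ via the $G$-Euler characteristic formula $(\ref{muformula})$ and then descend along the exact sequence $1 \rightarrow H \rightarrow G \rightarrow \Gamma \rightarrow 1$ using the Hochschild--Serre spectral sequence, thereby reducing everything to the cyclotomic level. Write $X = X(E/k_{\infty})$ and let $X(p) \subseteq X$ denote its $p$-torsion submodule; recall that $\mu_E(k) = \mu_{\Lambda(\Gamma)}\!\left( X(E/k^{\cyc}) \right)$. Since $X(p)$ is killed by a power of $p$ and $\cd_p(G) = 2$, $\cd_p(\Gamma) = \cd_p(H) = 1$, all the homology groups appearing below are finite, and $(\ref{muformula})$ reads $p^{\mu_{\Lambda(G)}(X)} = \chi(G, X(p))$. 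First I would record two structural inputs. (a) Pontryagin duality applied to Lemma~\ref{H1vanish} gives $H_1(H, X) \cong H^1(H, \Sel(E/k_{\infty}))^{\vee} = 0$; as $H \cong {\bf{Z}}_p$ this means $X^H = 0$, hence $X(p)^H = 0$. (b) The control homomorphism furnished by the fundamental diagram in the proof of Lemma~\ref{Hloc} --- all of whose vertical cokernels vanish, by $\cd_p(H) = 1$ and Coates--Greenberg at the primes above $p$, and all of whose vertical kernels are finite because $E(k_{\infty})_{p^{\infty}}$ is finite --- yields, via the snake lemma, a $\Lambda(\Gamma)$-module map $X_H \rightarrow X(E/k^{\cyc})$ with finite kernel and finite cokernel (compare \cite[$\S 2$]{CSS}). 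By Theorem~\ref{kato-rohrlich}, $X(E/k^{\cyc})$ is $\Lambda(\Gamma)$-torsion, hence so is $X_H$, and $\mu_{\Lambda(\Gamma)}(X_H) = \mu_{\Lambda(\Gamma)}\!\left( X(E/k^{\cyc}) \right) = \mu_E(k)$.

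Next I would carry out the descent. In the Hochschild--Serre spectral sequence $H_s(\Gamma, H_t(H, X(p))) \Rightarrow H_{s+t}(G, X(p))$ one has $H_0(H, X(p)) = X(p)_H$, $H_1(H, X(p)) = X(p)^H = 0$ by input (a), and $H_t(H, -) = 0$ for $t \geq 2$ since $\cd_p(H) = 1$; comparing Euler characteristics therefore gives $\chi(G, X(p)) = \chi(\Gamma, X(p)_H)$. As $X(p)_H$ is a finitely generated $\Lambda(\Gamma)$-module killed by a power of $p$, a pseudo-isomorphism to a finite direct sum of modules $\Lambda(\Gamma)/p^{a}$ (together with $\chi(\Gamma, \Lambda(\Gamma)/p^{a}) = p^{a}$ and invariance of $\chi(\Gamma, -)$ under pseudo-isomorphism) shows $\chi(\Gamma, X(p)_H) = p^{\mu_{\Lambda(\Gamma)}(X(p)_H)}$. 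Combining, $\mu_{\Lambda(G)}(X) = \mu_{\Lambda(\Gamma)}(X(p)_H)$.

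It then remains to identify $\mu_{\Lambda(\Gamma)}(X(p)_H)$ with $\mu_E(k)$. Taking $H$-homology of $0 \rightarrow X(p) \rightarrow X \rightarrow \mathfrak{X}(E/k_{\infty}) \rightarrow 0$ and using $H_1(H, X) = 0$ from input (a) gives an exact sequence of $\Lambda(\Gamma)$-modules
\[ 0 \longrightarrow H_1\!\left( H, \mathfrak{X}(E/k_{\infty}) \right) \longrightarrow X(p)_H \longrightarrow X_H \longrightarrow \mathfrak{X}(E/k_{\infty})_H \longrightarrow 0. \]
Now $H_1(H, \mathfrak{X}(E/k_{\infty})) = \mathfrak{X}(E/k_{\infty})^H$ and $\mathfrak{X}(E/k_{\infty})_H$ are both finitely generated over $\Lambda(H)$ and annihilated by the augmentation ideal of $H$, hence finitely generated over ${\bf{Z}}_p$ --- this is exactly where the hypothesis that $\mathfrak{X}(E/k_{\infty})$ be finitely generated over $\Lambda(H)$ is used --- so both have vanishing $\mu_{\Lambda(\Gamma)}$-invariant. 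All four terms are finitely generated torsion $\Lambda(\Gamma)$-modules, so by additivity of $\mu_{\Lambda(\Gamma)}$ along the sequence, $\mu_{\Lambda(\Gamma)}(X(p)_H) = \mu_{\Lambda(\Gamma)}(X_H) = \mu_E(k)$ by input (b). Hence $\mu_{\Lambda(G)}(X(E/k_{\infty})) = \mu_E(k)$, as claimed.

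The computation is essentially formal once the two inputs are in place, so I expect the main point to be precisely the vanishing $X(p)^H = 0$: without it the Hochschild--Serre comparison produces an extra factor $\chi(\Gamma, X(p)^H) = p^{\mu_{\Lambda(\Gamma)}(X(p)^H)}$ in the denominator, which need not be trivial for a general torsion $\Lambda(G)$-module, and the asserted equality can genuinely fail. This is where good ordinary reduction enters, through Lemma~\ref{H1vanish}. The remaining work is bookkeeping: ensuring the control map and the two boundary terms $\mathfrak{X}(E/k_{\infty})^H$, $\mathfrak{X}(E/k_{\infty})_H$ have the stated finiteness properties, so that the $\mu_{\Lambda(\Gamma)}$-additivity and the Euler-characteristic manipulations are all legitimate.
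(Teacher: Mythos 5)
Your argument is correct and follows essentially the same route as the paper's: the Euler characteristic formula $(\ref{muformula})$, Hochschild--Serre descent from $G$ to $\Gamma$ using $H_1(H,X(E/k_\infty))=0$ from Lemma~\ref{H1vanish}, the control map $X(E/k_\infty)_H \to X(E/k^{\cyc})$ with finite (co)kernel, and the finite generation of $\mathfrak{X}(E/k_\infty)$ over $\Lambda(H)$ to annihilate the boundary $\mu$-terms. Your streamlining via the observation $H_1(H,X) = X^H$ for $H\cong {\bf{Z}}_p$ (so that $X(p)^H = 0$ kills the higher $H$-homology of $X(p)$ directly) is a slightly cleaner packaging of the same vanishing the paper extracts from the long exact sequence and $\cd_p(H)=1$, but it is not a different proof.
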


\begin{proof}
See \cite[Propostion 2.9]{CSS}, we give a sketch of the proof. 
Note that we have $X(E/k_{\infty})_H =H_0(H, X(E/k_{\infty})).$ Note as well that 
$H_1(H, X(E/k_{\infty})) =0$ by Lemma \ref{H1vanish}. Taking $H$-homology 
of the short exact sequence $$0 \longrightarrow X(E/k_{\infty})(p) \longrightarrow
X(E/k_{\infty}) \longrightarrow \mathfrak{X}(E/k_{\infty})
\longrightarrow 0,$$ we obtain a short exact sequence of
$\Lambda(\Gamma)$-modules
\begin{align*} 0 \longrightarrow H_1(H, \mathfrak{X}(E/k_{\infty}))
&\longrightarrow H_0(H, X(E/k_{\infty})(p))\\ &\longrightarrow
H_0(H, X(E/k_{\infty})) \longrightarrow H_0(H,
\mathfrak{X}(E/k_{\infty})) \longrightarrow 0.\end{align*} Following
\cite[Proposition 1.9]{H}, we then show that the alternating sum of
$\mu_{\Lambda(\Gamma)}$-invariants along this sequence vanishes. Moreover, the
$\mu_{\Lambda(\Gamma)}$-invariants of the two central terms can be
computed as follows. For $H_0(H, X(E/k_{\infty})) =
X(E/k_{\infty})_H,$ it is well known (see the proof of Theorem
\ref{mhg} below for instance) that restriction on cohomology induces
a $\Lambda(\Gamma)$-homomorphism
$$\alpha: X(E/k_{\infty})_H \longrightarrow X(E/k^{\text{cyc}})$$
with $\ker(\alpha)$ finitely-generated over ${\bf{Z}}_p$ and
$\coker(\alpha)$ finite. We deduce that
$$\mu_{\Lambda(\Gamma)}((X(E/k_{\infty})_H) =
\mu_{\Lambda(\Gamma)}((X(E/k^{\text{cyc}})) = \mu_E(k).$$ For
$H_0(H, X(E/k_{\infty})(p)),$ consider the Hochschild-Serre spectral
sequence \begin{align*} 0 \longrightarrow H_0(\Gamma, H_i(H,
X(E/k_{\infty})(p)) &\longrightarrow H_i(G, X(E/k_{\infty})(p))\\
&\longrightarrow H_1(\Gamma, H_{i-1}(H, X(E/k_{\infty})(p))
\longrightarrow 0. \end{align*} We deduce that $$\chi(G,
X(E/k_{\infty})(p)) = \prod_{i=0}^{1} \chi(\Gamma, H_i(H,
X(E/k_{\infty})(p)))^{(-1)^i},$$ and so
$$\mu_{\Lambda(G)}(X(E/k_{\infty})) = \sum_{i=0}^{1}(-1)^i
\mu_{\Lambda(\Gamma)}(H_i(H, X(E/k_{\infty})(p))).$$ Putting terms
together from the first (alternating sum) sequence above, we find
that $\mu_{\Lambda(G)}(X(E/k_{\infty})) =$  $$\mu_E(k) + \sum_{i
=0}^{1} (-1)^{i+1}\mu_{\Lambda(\Gamma)}(H_i(H,
\mathfrak{X}(E/k_{\infty}))) + \sum_{i =0}^{1}
(-1)^{i}\mu_{\Lambda(\Gamma)}(H_i(H, X(E/k_{\infty})(p))).$$ Recall
that $H_i(H, X(E/k_{\infty}))=0$ for all $i \geq 0$ by Lemma
$\ref{H1vanish}$. Taking $H$-cohomology of the short exact sequence
$$0 \longrightarrow X(E/k_{\infty})(p) \longrightarrow
X(E/k_{\infty}) \longrightarrow \mathfrak{X}(E/k_{\infty})
\longrightarrow 0,$$ obtain that $H_1(H, X(E/k_{\infty})(p))) =
H_2(H, \mathfrak{X}(E/k_{\infty}))=0.$ Deduce that
$$\mu_{\Lambda(G)}(X(E/k_{\infty})) =\mu_E(k) + \sum_{i =0}^{1}
(-1)^{i+1}\mu_{\Lambda(\Gamma)}(H_i(H,
\mathfrak{X}(E/k_{\infty}))).$$ Since we assume that
$\mathfrak{X}(E/k_{\infty})$ is finitely-generated over
$\Lambda(H),$ it follows that $\mathfrak{X}(E/k_{\infty})_H$ is
finitely-generated over ${\bf{Z}}_p$. Thus,
$$\mu_{\Lambda(\Gamma)}(H_i(H, \mathfrak{X}(E/k_{\infty})))=0.$$ In
particular, $\mu_{\Lambda(G)}(X(E/k_{\infty})) = \mu_E(k)$ as
claimed. $\Box$ \end{proof} \end{remark}

\begin{remark}[The $G$-Euler characteristic of $\Sel(E/k_{\infty})$.] We 
now give a formula for the $G$-Euler characteristic of $\Sel(E/k_{\infty})$,
\begin{align*} \chi(G, \Sel(E/k_{\infty})) &= \prod_{i \geq 0} \vert H^i(G, 
\Sel(E/k_{\infty})) \vert^{(-1)^ i}, \end{align*} which in the setup described 
above is well defined (i.e. finite). Note that this invariant is related to the 
characteristic power series $\operatorname{char}_{\Lambda(G)} X(E/k_{\infty}) $
by the formula \begin{align*}\chi(G, \Sel(E/k_{\infty})) &= \vert 
\operatorname{char}_{\Lambda(G)} X(E/k_{\infty})(0) \vert_p^{-1}, \end{align*}
where $\operatorname{char}_{\Lambda(G)} X(E/k_{\infty})(0)$ denotes the image
of $\operatorname{char}_{\Lambda(G)} X(E/k_{\infty}) $ under the natural augmentation 
map $\Lambda(G) \longrightarrow {\bf{Z}}_p$. We must first establish the following result.

\begin{lemma}\label{finite} If $E$ has good ordinary reduction at each prime above $p$ in $k$,
then the $p$-primary torsion subgroup $E(k_{\infty})_{p^{\infty}}$ is finite. \end{lemma}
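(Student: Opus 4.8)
The plan is to show that $E(k_\infty)_{p^\infty}$ is finite by a compactness/Galois-theoretic argument. First I would observe that $k_\infty/k$ is a $\mathbf{Z}_p^2$-extension, so $G = \Gal(k_\infty/k)$ is a $p$-adic Lie group of dimension $2$, and $E(k_\infty)_{p^\infty}$ is a discrete $p$-primary $G$-module. The key point is that $E_{p^\infty}$ is a cofinitely generated $\mathbf{Z}_p$-module of corank $2$, on which $G_k = \Gal(\overline{k}/k)$ acts, and the image of $G_k$ in $\Aut(E_{p^\infty}) \cong \GL(\mathbf{Z}_p)$ is large — since $E$ has no complex multiplication, by Serre's open image theorem this image is open in $\GL(\mathbf{Z}_p)$. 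In particular the restriction of this representation to $\Gal(\overline{k}/k_\infty)$ still has open (hence infinite, and in fact of finite index) image, because $G = \Gal(k_\infty/k)$ is abelian and therefore maps to a subgroup of the (small) abelianization-type quotient; more precisely the commutator subgroup of an open subgroup of $\GL(\mathbf{Z}_p)$ is open, so $\Gal(\overline{k}/k_\infty)$ surjects onto an open subgroup of $\operatorname{SL}_2(\mathbf{Z}_p)$.

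Next I would use this to pin down the invariants. The group $E(k_\infty)_{p^\infty} = E_{p^\infty}^{\Gal(\overline k/k_\infty)}$ consists of the points fixed by a subgroup whose image in $\GL(\mathbf{Z}_p)$ is open. If this fixed module were infinite, it would contain a submodule isomorphic to $\mathbf{Q}_p/\mathbf{Z}_p$, i.e. a full Tate module direction fixed by an open subgroup of $\GL(\mathbf{Z}_p)$; but an open subgroup of $\GL(\mathbf{Z}_p)$ acts irreducibly on $\mathbf{Q}_p^2$ (it contains an open subgroup of $\operatorname{SL}_2$), so it has no nonzero fixed line, giving a contradiction. Hence $E(k_\infty)_{p^\infty}$ is finite. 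Alternatively, and perhaps cleaner for the paper's setting, I would invoke the cyclotomic input directly: by Imai's theorem (already cited in the proof of Theorem~\ref{cycloc}) $E(k^{\cyc})_{p^\infty}$ is finite, and one reduces the $k_\infty$ statement to a statement over $k^{\cyc}$ by noting that $H = \Gal(k_\infty/k^{\cyc}) \cong \mathbf{Z}_p$ and that $E(k_\infty)_{p^\infty}$ is a discrete $p$-primary $H$-module whose $H$-invariants equal $E(k^{\cyc})_{p^\infty}$; if $E(k_\infty)_{p^\infty}$ were infinite it would be $H$-divisible of positive corank, but a nontrivial $\mathbf{Z}_p$-extension of a number field cannot have its Galois group act on such a module without fixed points of every finite layer accumulating, again forcing unboundedness of torsion in a finite layer, contradicting Imai's theorem applied to the intermediate $\mathbf{Z}_p$-extensions.

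The main obstacle is making the reduction step rigorous: one must rule out the possibility that $E(k_\infty)_{p^\infty}$ is infinite even though each finite layer contributes only finitely much torsion, i.e. that torsion "appears in the limit." This is handled by the observation that $E_{p^\infty}$ has finite corank $2$ as a $\mathbf{Z}_p$-module, so any infinite $p$-primary submodule already contains a copy of $\mathbf{Q}_p/\mathbf{Z}_p$ cut out over a finite extension of $k$; that finite extension lies in some layer $k_n \subset k_\infty$, and then $E(k_n)_{p^\infty}$ would already be infinite, contradicting the Northcott/Mordell–Weil finiteness of torsion over the number field $k_n$ (or directly Imai's theorem along the relevant $\mathbf{Z}_p$-line). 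Thus finiteness of $E(k_\infty)_{p^\infty}$ follows, which is what is needed to invoke Proposition~\ref{injection} in the sequel.
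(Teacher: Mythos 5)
Your first argument, via Serre's open image theorem, is correct and takes a genuinely different route from the paper's. The paper's own proof is purely local: it fixes a rational prime $v \neq p$ inert in $k$, observes by class field theory that the prime of $k$ above $v$ splits completely in the anticyclotomic extension $D_{\infty}$, so that the completion $k_{\infty,\mathfrak{w}}$ of $k_{\infty}$ at a prime $\mathfrak{w}$ above $v$ coincides with the cyclotomic ${\bf{Z}}_p$-extension of $k_v$, and then applies Imai's theorem locally to conclude that $E(k_{\infty,\mathfrak{w}})_{p^{\infty}}$ is finite; the global statement follows from the injection $E(k_{\infty})_{p^{\infty}} \hookrightarrow E(k_{\infty,\mathfrak{w}})_{p^{\infty}}$. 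The paper's approach works for CM and non-CM curves alike and needs only Imai's theorem. Your open-image argument, which shows that the image of $\Gal(\overline{k}/k_{\infty})$ in $\Aut(E_{p^{\infty}})$ contains an open subgroup of $\operatorname{SL}_2({\bf{Z}}_p)$ (because $\Gal(k_{\infty}/k)$ is abelian, so this image contains the derived subgroup of the open image of $\Gal(\overline{k}/k)$) and hence fixes no line in $V_pE$, is self-contained but relies on $E$ being non-CM and, to make the derived-subgroup step clean, implicitly on $p \geq 5$.

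Your second, alternative, argument and the closing ``main obstacle'' paragraph contain a genuine gap. The assertion that any infinite $p$-primary submodule of $E(k_{\infty})_{p^{\infty}}$ ``already contains a copy of ${\bf{Q}}_p/{\bf{Z}}_p$ cut out over a finite extension of $k$'' is unjustified and begs the question: if $M \cong {\bf{Q}}_p/{\bf{Z}}_p$ is a divisible submodule of $E(k_{\infty})_{p^{\infty}}$, each finite layer $M[p^n]$ is defined over some finite subextension of $k_{\infty}$, but these subextensions may grow with $n$, so $M$ need not be defined over any single number field (were it so, Mordell--Weil finiteness of torsion would finish the proof outright, with no role left for Imai's theorem). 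Likewise, the claim that an $H$-divisible module of positive corank with finite $H$-invariants is automatically contradictory is false: $H \cong {\bf{Z}}_p$ can act on ${\bf{Q}}_p/{\bf{Z}}_p$ through a unit $u \neq 1$, making $M^H$ finite while $M$ is infinite, and this is perfectly consistent with $E(k^{\cyc})_{p^{\infty}}$ being finite. Finiteness over $k^{\cyc}$ alone does not propagate formally to $k_{\infty}$; one needs either the anticyclotomic local trick the paper uses or the global open-image argument you gave first.
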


\begin{proof} See the argument of \cite{HV}[Lemma 3.12]. We present the following 
alternative proof. Fix a rational prime $v$ that remains inert in $k$ and does not equal
$p$. Write $k_v$ to denote the localization of $k$ at the prime above $v$. Write $k_v^{\cyc}$
to denote the cyclotomic ${\bf{Z}}_p$-extension of $k_v$. By Imai's theorem \cite{Im} (cf. \cite{CS2}[A.2.7]),
the $p$-primary subgroup of $E(k_v^{\cyc})$ is finite. On the other hand, the prime above $v$ in $k$
splits completely in $D_{\infty}$ by class field theory. Hence, writing $D_{\infty, w}$ to denote the union 
of all completions of $D_{\infty}$ at primes above $v$, we have an isomorphism of local fields 
$D_{\infty, w} \cong k_v$. This induces an isomorphism of Mordell-Weil groups $E(D_{\infty, w}) \cong
E(k_v)$. Hence, writing $k_{\infty, \mathfrak{w}}$ to denote the union of all completions of $k_{\infty}$ at 
primes above $v$, we have the identifications \begin{align*}
E(k_{\infty, \mathfrak{w}}) \cong E(D_{\infty. w} \cdot k_v^{\cyc}) \cong E(k_v^{\cyc}).
\end{align*} Hence, the $p$-primary part of $E(k_{\infty, \mathfrak{w}})$ is seen to be finite by Imai's theorem.
Since $E(k_{\infty})_{p^{\infty}}$ injects into the $p$-primary part of $E(k_{\infty, \mathfrak{w}})$, the result follows. 
$\Box$ \end{proof}

\begin{theorem}\label{Euler} Assume that  $E$ has good ordinary reduction at all primes above $p$ in $k$, that $p \geq 5$,
and that $\Sel(E/k)$ is finite. Then, the $G$-Euler characteristic of $\Sel(E/k_{\infty})$ is well defined, and given 
by the formula

\begin{align*}
\chi(G, \Sel(E/k_{\infty})) &= \frac{\vert \Sha(E/k)(p) \vert}{\vert E(k)_{p^{\infty}} \vert^2} \cdot \prod_{v \mid p} \vert  
\widetilde{E}_v(\kappa_v)(p)\vert^2 \cdot \prod_{v} \vert c_v \vert_p^{-1}.
\end{align*} Here, $\Sha(E/k)(p)$ denotes the $p$-primary part of $\Sha(E/K)$, 
$E(k)_{p^{\infty}}$ the $p$-primary part of $E(k)$, $\kappa_v$ the residue field at $v$, $\widetilde{E}_v$ the
reduction of $E$ over $\kappa_v$, and $c_v=[E(k_v):E_0(k_v)]$ the local Tamagawa factor at a prime 
$v \subset \mathcal{O}_k$. \end{theorem}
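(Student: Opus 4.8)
The plan is to adapt the $G$-Euler characteristic computation of Coates--Howson, and its treatment in Coates--Sujatha, from the cyclotomic ${\bf{Z}}_p$-extension to the ${\bf{Z}}_p^2$-extension $k_{\infty}/k$. First I would verify that $\chi(G,\Sel(E/k_{\infty}))$ is well defined, i.e.\ that each $H^i(G,\Sel(E/k_{\infty}))$ is finite. Since $X(E/k_{\infty})$ is $\Lambda(G)$-torsion (Theorem \ref{torsion}) and $E(k_{\infty})_{p^{\infty}}$ is finite (Lemma \ref{finite}), this amounts to the non-vanishing of $\operatorname{char}_{\Lambda(G)}X(E/k_{\infty})$ at the augmentation ideal, which follows from the finiteness of $\Sel(E/k)$ by a standard control argument comparing $\Sel(E/k_{\infty})^G$ with $\Sel(E/k)$ up to finite kernel and cokernel.

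The computation itself starts from the fundamental short exact sequence of $\Lambda(G)$-modules
\begin{align*}
0 \longrightarrow \Sel(E/k_{\infty}) \longrightarrow H^1(G_S(k_{\infty}),E_{p^{\infty}}) \overset{\lambda_S(k_{\infty})}{\longrightarrow} \bigoplus_{v\in S}J_v(k_{\infty}) \longrightarrow 0,
\end{align*}
whose surjectivity is established in the proof of Theorem \ref{torsion}. Taking $G$-cohomology and using the weak Leopoldt vanishing $H^2(G_S(k_{\infty}),E_{p^{\infty}})=0$ (valid over $k_{\infty}$, which contains $k^{\cyc}$; cf.\ Theorem \ref{cycloc}) together with the dimension vanishing $\cd_p(G)=2$, the long exact sequences collapse. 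The middle and right terms have individually infinite cohomology at the primes above $p$, so rather than split the Euler characteristic naively I would work at finite level with the modules $E_{p^n}$, extract finite identities via Poitou--Tate and local Tate duality over $k$ at each level, and pass to the limit (the truncated Euler characteristic technique). This produces a finite identity of the shape
\begin{align*}
\chi(G,\Sel(E/k_{\infty})) = \chi_{\mathrm{glob}} \cdot \prod_{v\in S}\chi_{\mathrm{loc},v},
\end{align*}
in which the factor of $|E_{p^n}|$ from the archimedean place and the factors from the primes above $p$ conspire to cancel.

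For the global factor $\chi_{\mathrm{glob}}$ I would use that Tate's global Euler characteristic formula over the imaginary quadratic field $k$ receives a contribution only from its single complex place, while Poitou--Tate duality combined with the self-duality of $E_{p^n}$ under the Weil pairing identifies the relevant $H^2$-term with $E(k)[p^n]$; feeding this into the Cassels--Poitou--Tate descent sequence over $k$ (where $\Sel(E/k)$ finite forces $E(k)$ finite, so $\Sel(E/k)\cong\Sha(E/k)(p)$) and tracking the control-theorem error terms yields $\chi_{\mathrm{glob}} = |\Sha(E/k)(p)|\cdot|E(k)_{p^{\infty}}|^{-2}$. For the local factors I would treat each prime separately: for $v\nmid p$ the term $\chi_{\mathrm{loc},v}$ is computed from Tate local duality in terms of the norm groups of $E$ over the completions and equals $|c_v|_p^{-1}$ (so only the bad primes contribute); for $v\mid p$, where $E$ has good ordinary reduction and hence $c_v=1$, I would invoke the Coates--Greenberg description of the local condition via the formal group and the reduction $\widetilde{E}_v$, apply the local Euler--Poincar\'e formula to the decomposition group, and verify that $\chi_{\mathrm{loc},v} = |\widetilde{E}_v(\kappa_v)(p)|^2$. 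Assembling the global and local factors then gives the stated formula. The main obstacle is precisely this bookkeeping at the primes above $p$: one must track the deeply ramified local extensions inside $k_{\infty}$ in the style of Coates--Greenberg, control the cohomology of the formal group there, and pin down the exact powers of $|\widetilde{E}_v(\kappa_v)(p)|$; by comparison the finiteness, the collapse of the spectral sequences, and the tame computations are routine.
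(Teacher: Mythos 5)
The proposal is correct and follows essentially the same approach as the paper: the paper's proof of Theorem \ref{Euler} simply cites \cite{HV}[Theorem 4.1] as "a standard computation" hinging on (i) $X(E/k_{\infty})$ being $\Lambda(G)$-torsion, (ii) $E(k_{\infty})_{p^{\infty}}$ being finite, and (iii) ramification at $p$ in $k_{\infty}$, and your sketch spells out exactly what that standard computation involves (control diagram and Hochschild--Serre, Cassels--Poitou--Tate for the global factor, local Tate duality away from $p$, and Coates--Greenberg at $p$). The one stylistic difference worth flagging is that the Hachimori--Venjakob/Coates--Sujatha computation works throughout with the divisible module $E_{p^{\infty}}$ and extracts finiteness from the control maps directly rather than descending to finite levels $E_{p^n}$; your "truncated Euler characteristic" detour is not wrong, but it is an extra layer you can drop in favour of the direct argument in \cite[Theorem 4.1]{HV} or \cite[\S 3--5]{CSS}, which also handles the exponent $2$ on $|E(k)_{p^{\infty}}|$ and on $|\widetilde{E}_v(\kappa_v)(p)|$ cleanly from $H^1(G, E(k_{\infty})_{p^{\infty}})$ and $H^2(G, E(k_{\infty})_{p^{\infty}})$ being finite of equal order together with local Poincar\'e duality for the ordinary formal group.
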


\begin{proof} See for instance \cite{HV}[Theorem 4.1] The proof is a standard computation, using the facts that 
(i) $X(E/k_{\infty})$ is $\Lambda(G)$-torsion (by Theorem \ref{torsion} above), (ii) $E(k_{\infty})_{p^{\infty}}$ is finite 
(by Lemma \ref{finite} above), and (iii) $p$ is totally ramified in $k_{\infty}$. $\Box$ \end{proof}\end{remark}

\begin{remark}[$\Lambda(H)$-module structure.]

Let us assume now that $\mu_E(k)=0$. We obtain the following
$\Lambda(H)$-module structure theory for $X(E/k_{\infty})$.

\begin{theorem}
\label{mhg} Suppose that $E$ has good ordinary reduction at $p$,
with $\mu_E(k)=0$. Then, there is a $\Lambda(H)$-module isomorphism
$X(E/k_{\infty})\cong \Lambda(H)^{\lambda_E(k)}.$
\end{theorem}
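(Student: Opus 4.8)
The plan is to verify the standard module-theoretic criterion: a finitely generated module $M$ over the two-dimensional regular local ring $\Lambda(H) \cong {\bf{Z}}_p[[T]]$ (fix $T = \gamma - 1$ for a topological generator $\gamma$ of $H$) is $\Lambda(H)$-free if and only if $M[T] = 0$ and $M_H = M/TM$ is a free ${\bf{Z}}_p$-module. Indeed, if both conditions hold then $T$ is a nonzerodivisor on $M$ and $p$ is a nonzerodivisor on $M/TM$, so $(T,p)$ is an $M$-regular system of parameters, whence $\operatorname{depth}_{\Lambda(H)} M = 2 = \operatorname{depth}\Lambda(H)$ and the Auslander--Buchsbaum formula gives $\operatorname{pd}_{\Lambda(H)} M = 0$; the converse is immediate. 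Granting this, once $M = X(E/k_{\infty})$ is shown to satisfy the criterion, the rank of the resulting free module equals $\dim_{{\bf{Q}}_p}\bigl(X(E/k_{\infty})_H \otimes_{{\bf{Z}}_p} {\bf{Q}}_p\bigr)$, which will be identified with $\lambda_E(k)$. So the proof reduces to three points: finite generation of $X(E/k_{\infty})$ over $\Lambda(H)$, injectivity of $T$, and ${\bf{Z}}_p$-torsion-freeness of $X(E/k_{\infty})_H$.

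For finite generation over $\Lambda(H)$, by the topological Nakayama lemma (applied to the compact $\Lambda(H)$-module $X(E/k_{\infty})$, with maximal ideal $(p,T)$) it suffices to show that $X(E/k_{\infty})_H$ is finitely generated over ${\bf{Z}}_p$. Pontryagin-dualizing the restriction map $\Sel(E/k^{\cyc}) \longrightarrow \Sel(E/k_{\infty})^H$ produces a $\Lambda(\Gamma)$-homomorphism $\alpha \colon X(E/k_{\infty})_H \to X(E/k^{\cyc})$. Applying the snake lemma to the fundamental diagram from the proof of Lemma \ref{Hloc} — whose rows are the short exact sequences of Theorem \ref{cycloc} and of Lemma \ref{Hloc}, and whose middle and right vertical maps are surjective with finite kernels (the middle kernel is $H^1(H, E_{p^{\infty}})$, finite since $E(k_{\infty})_{p^{\infty}}$ is finite by Lemma \ref{finite} and $\cd_p H = 1$; the right kernels are the finite groups $\ker(\gamma_w(k^{\cyc}))$ appearing in the proof of Lemma \ref{Hloc}) — shows that $\alpha$ has finite kernel and finite cokernel. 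Since $\mu_E(k) = 0$, the pseudoisomorphism $(\ref{1vs})$ has no $p$-power elementary factors, so $X(E/k^{\cyc})$ is finitely generated over ${\bf{Z}}_p$ of rank $\lambda_E(k)$; hence $X(E/k_{\infty})_H$ is finitely generated over ${\bf{Z}}_p$, of the same rank $\lambda_E(k)$.

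Injectivity of $T$ is immediate: dualizing the vanishing $H^1(H, \Sel(E/k_{\infty})) = 0$ of Lemma \ref{H1vanish} gives $X(E/k_{\infty})^H = H_1(H, X(E/k_{\infty})) = 0$, that is, $X(E/k_{\infty})[T] = 0$. What remains is the ${\bf{Z}}_p$-torsion-freeness of $X(E/k_{\infty})_H$, and I expect this to be the main obstacle. The finiteness of $\ker\alpha$ and $\coker\alpha$ does not suffice on its own, since the finite kernel of $\alpha$ could a priori carry $p$-torsion; nor do purely formal substitutes — $X(E/k_{\infty})$ having no nonzero finite submodule, or being $\Lambda(H)$-torsion-free, or having no nonzero pseudonull $\Lambda(G)$-submodule — each fails to force freeness, as the module $(p,T) \subset \Lambda(H)$ already shows. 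The approach I would take is to prove divisibility of $\Sel(E/k_{\infty})^H$ directly, combining the exact sequence of Lemma \ref{Hloc} with the $p$-divisibility of $H^1(G_S(k_{\infty}), E_{p^{\infty}})$ and of the local terms $J_v(k_{\infty})^H$; equivalently, one runs the mod $p$ analogues of Lemmas \ref{Hloc}, \ref{hsss} and \ref{H1vanish} (with $E_p$ in place of $E_{p^{\infty}}$, again using finiteness of $E(k_{\infty})_{p^{\infty}}$, and using $\mu_E(k)=0$) to obtain $H^1(H, \Sel(E/k_{\infty})[p]) = 0$, hence $\bigl(X(E/k_{\infty})/pX(E/k_{\infty})\bigr)^H = 0$, which together with $X(E/k_{\infty})[T] = 0$ forces $X(E/k_{\infty})_H$ to have no $p$-torsion.

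Once ${\bf{Z}}_p$-torsion-freeness of $X(E/k_{\infty})_H$ is in hand, the criterion of the first paragraph yields $X(E/k_{\infty}) \cong \Lambda(H)^r$ with $r = \operatorname{rank}_{{\bf{Z}}_p} X(E/k_{\infty})_H = \lambda_E(k)$, which is the assertion. Thus essentially all of the difficulty is concentrated in the torsion-freeness/divisibility step; the remaining ingredients are formal homological algebra over $\Lambda(H)$ together with the cyclotomic structure theory of Kato--Rohrlich recorded above.
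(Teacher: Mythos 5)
Your structural framing is cleaner than the paper's: the depth/Auslander--Buchsbaum criterion ($M$ free over $\Lambda(H) \cong {\bf{Z}}_p[[T]]$ iff $M[T]=0$ and $M_H$ is ${\bf{Z}}_p$-free) is exactly the right module-theoretic lens, and your finite-generation and $T$-injectivity steps coincide with the paper's (topological Nakayama, the control map $\alpha$ with finite kernel and cokernel, and Lemma \ref{H1vanish}). You also correctly identify ${\bf{Z}}_p$-torsion-freeness of $X_H$ as the crux, and rightly observe that finiteness of $\ker\alpha$, absence of finite submodules, or $\Lambda(H)$-torsion-freeness are each insufficient on their own.

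However, the argument you propose for the crux does not work. Tensoring $0 \to X \xrightarrow{T} X \to X_H \to 0$ with ${\bf{Z}}/p$ over ${\bf{Z}}_p$ gives the exact sequence
\[
0 \longrightarrow X[p]/T\,X[p] \longrightarrow X_H[p] \longrightarrow (X/pX)[T] \longrightarrow 0,
\]
so the two vanishings you establish ($X[T]=0$ and $(X/pX)^H = (X/pX)[T] = 0$) still leave the term $X[p]/TX[p]$, which vanishes only if $X[p]$ itself does (since $X[T]=0$ makes $X[p]$ torsion-free, hence free, over $\Lambda(H)/p \cong {\bf{F}}_p[[T]]$). Concretely, $X = \Lambda(H) \oplus \Lambda(H)/p$ satisfies $X[T]=0$ and $(X/pX)[T]=0$, yet $X_H \cong {\bf{Z}}_p \oplus {\bf{F}}_p$ has $p$-torsion and $X$ is not free. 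So mod-$p$ control at the single layer $k^{\cyc}$ cannot give your conclusion; the missing ingredient is the vanishing of $X[p]$, i.e.\ of $\mu_{\Lambda(H)}\bigl(X(E/k_\infty)\bigr)$, and this is not a formal consequence of $H^1(H,\Sel(E/k_\infty)[p])=0$.

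The paper supplies this via an arithmetic input your proposal does not invoke: Hachimori--Matsuno's analogue of Kida's formula for Selmer groups \cite{HM}, applied down the entire anticyclotomic tower rather than at $k^{\cyc}$ alone. Since $\mu_E(k)=0$, \cite{HM} gives $\mu_E(D_n)=0$ and $\lambda_E(D_n) = p^n\lambda_E(k)$ for every intermediate layer $D_n^{\cyc}$, and Matsuno's no-finite-submodule results \cite{M} then make each $X(E/D_n^{\cyc})$ ${\bf{Z}}_p$-free. The paper uses the control maps $X(E/k_\infty)_{H_n} \to X(E/D_n^{\cyc})$ with finite kernel and cokernel for \emph{every} $n$, not just $n=0$, to pin down the $\Lambda(H)$-module structure of $X(E/k_\infty)$. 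Your argument uses only the $n=0$ layer; it is the descent through the $D_n^{\cyc}$ together with the preservation of $\mu=0$ under that base change that closes the gap you correctly identified.
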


\begin{proof} By Nakayama's lemma, $X(E/k_{\infty})$
is finitely generated over $\Lambda(H)$ if and only if
$X(E/k_{\infty})_H$ is finitely generated over ${\bf{Z}}_p$, hence by
duality if and only if $S(E/k_{\infty})^H$ is co-finitely generated
over ${\bf{Z}}_p$. Given $n \geq 0$ an integer, let $D_n$ denote 
the degree-$p^n$ extension of $k$ contained in $D_{\infty}$, with 
$D_{n}^{\cyc}$ its cyclotomic ${\bf{Z}}_p$-extension. Let $H_n =
\Gal(k_{\infty}/D_{n}^{\cyc})$. Note that $\cd_p(H_n) \leq 1$. Consider the diagram
$$\begin{CD}
 0 @>>> S(E/k_{\infty})^{H_n} @>>> H^1(G_S(k_{\infty}),
E_{p^{\infty}})^{H_n} @>>>
\bigoplus_{v \in S}J_v(k_{\infty})^{H_n} \\
@.        @AA{\alpha_n}A              @AA{\beta_n}A
@AA{\gamma_n}A     @. \\
0 @>>> S(E/D_{n}^{\text{cyc}}) @>>> H^1(G_S(D_{n}^{\text{cyc}}),
E_{p^{\infty}})
@>>> \bigoplus_{v \in S}J_v(D_{n}^{\text{cyc}}).\\
\end{CD}$$ Here, the horizontal rows are exact sequences, and the 
vertical maps are induced by restriction on cohomology.
We have by inflation-restriction that $\coker(\beta_n) \cong H^2(H_n,
E_{p^{\infty}})=0$ and that $\ker(\beta_n) \cong H^1(H_n,
E_{p^{\infty}}).$ Note that $H^1(H_n, E_{p^{\infty}})$ has
cardinality equal to that of $E(D_{n}^{\text{cyc}})_{p^{\infty}},$
which is finite by Imai's theorem \cite{Im}. Given $v \in S$, fix
a place $w$ above $v$ in $k_{\infty}$. We can then write the 
local restriction map as $$ \gamma_n = \bigoplus_w \gamma_{n, w},$$
where the direct sum ranges over the primes above each $v \in S$
in $D_n$. Let $\Omega_{n, w}$ denote the decomposition group of $H_n$ at $w$. 
We argue as in the proof of Lemma $\ref{cycloc}$ that $\coker(\gamma_n) =0.$ 
Following \cite[Lemma 3.7]{C} also find that $$\coker(\gamma_{n,w}) \cong
H^2(\Omega_{n,w}, E_{p^{\infty}}) = 0 ~\text{~and }
\ker(\gamma_{n,w}) \cong H^1(\Omega_{n,w}, E_{p^{\infty}}).$$ In
particular, since the latter group is known to be finite, it follows
that $\ker(\gamma_n)= \bigoplus_w \ker(\gamma_{n,w})$ is finite. It
then follows from the snake lemma that $\ker(\alpha_n)$ and
$\coker(\alpha_n)$ must be finite. Now, recall that $X(E/k^{\cyc})$
is $\Lambda(\Gamma)$-torsion by Theorem \ref{kato-rohrlich}. Matsuno's 
theorem $\cite{M}$ then implies that $X(E/k^{\cyc})$ has no nontrivial finite
$\Lambda(\Gamma)$-submodule. On the other hand, since $\mu_E(k)=0$,
Hachimori and Matsuno's analogue of Kida's formula $\cite{HM}$
implies that $X(E/D_{n}^{\cyc})$ is $\Lambda(\Gamma_n)$-torsion with
$\Gamma_n = \Gal(D_{n}^{\cyc}/D_n)$ and cyclotomic Iwasawa
invariants $$\lambda_E(D_n) = [D_n:k]\cdot\lambda_E(k) ~\text{ and }~
\mu_E(D_n)=\mu_E(k)=0.$$ Since $D_n$ is not totally real, it follows
from Proposition 7.5 of Matsuno \cite{M} that $X(E/D_{n}^{\cyc})$
has no nontrivial finite $\Lambda(\Gamma_n)$-submodule. In
particular, since $\mu_E(D_n)=0$ for each $n \geq 0$, Matsuno's theorem implies 
that $X(E/D_{n}^{\cyc})$ has no nontrivial
finite ${\bf{Z}}_p$-submodule for any $n \geq 0$. This makes the
inverse limit $X(E/k_{\infty}) = \ilim n X(E/D_{n}^{\cyc})$
${\bf{Z}}_p$-torsionfree, from which it follows that $\ker(\alpha_n)
= \coker(\alpha_n) = 0$ for any $n \geq 0$. Thus, we find an isomorphism of 
${\bf{Z}}_p$-modules $\alpha_0: X(E/k_{\infty})_{H_0} \cong
X(E/k^{\cyc}).$ Let us now put $r = \lambda_E(k).$ Let $x_1, \ldots, x_r$
denote a lift to $X(E/k_{\infty})$ of a fixed ${\bf{Z}}_p$-basis of
$X(E/k_{\infty})_H$. Let $I(H)$ denote the augmentation ideal of $H$
in $\Lambda(H)$. Note that $X(E/k_{\infty})_H =
X(E/k_{\infty})/I(H).$ Let $Y$ denote the $\Lambda(H)$-submodule of
$X(E/k_{\infty})$ generated by $x_1, \ldots x_r$. Observe that
$$I(H)(X(E/k_{\infty})/Y) = (I(H)X(E/k_{\infty})+Y)/Y = X(E/k_{\infty})/Y,$$ and so
$X(E/k_{\infty})=Y$ by Nakayama's lemma. In particular, this gives
an isomorphism of $\Lambda(H)$-modules
$$X(E/k_{\infty})\cong \Lambda(H)^r, ~~~~~ \sum_i a_ix_i
\longmapsto \sum_i a_i e_i,$$ where $e_1, \ldots, e_r$ is a standard
$\Lambda(H)$-basis of $\Lambda(H)^r.$ Observe now that
$X(E/k_{\infty})$ has no nontrivial finite $\Lambda(H)$-submodule,
thus making it $\Lambda(H)$-torsionfree. $\Box$
\end{proof}

\begin{corollary}\label{2mu}
Suppose that $E$ has good ordinary reduction at each prime above $p$ in $k$, with
$\mu_E(k)=0$. Then, $\mu_{\Lambda(G)}\left( X(E/k_{\infty})\right) = \mu_E(k) = 0$.\end{corollary}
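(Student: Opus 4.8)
The plan is to deduce this immediately from Theorem \ref{mhg}, with either Proposition \ref{generalmu} or formula $(\ref{muformula})$ supplying the final step. First I would invoke Theorem \ref{mhg}: since $\mu_E(k) = 0$, there is a $\Lambda(H)$-module isomorphism $X(E/k_{\infty}) \cong \Lambda(H)^{\lambda_E(k)}$. In particular $X(E/k_{\infty})$ is a \emph{free} $\Lambda(H)$-module of finite rank, hence $\Lambda(H)$-torsionfree; and since $p$ is not a zero divisor in $\Lambda(H) \cong {\bf{Z}}_p[[T]]$, it is also ${\bf{Z}}_p$-torsionfree. Therefore the $p$-primary submodule $X(E/k_{\infty})(p)$ vanishes, so that $\mathfrak{X}(E/k_{\infty}) = X(E/k_{\infty})/X(E/k_{\infty})(p) = X(E/k_{\infty})$, which is visibly finitely generated over $\Lambda(H)$.

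With this observation the hypotheses of Proposition \ref{generalmu} are satisfied, and that proposition gives $\mu_{\Lambda(G)}\left(X(E/k_{\infty})\right) = \mu_E(k) = 0$ directly. Alternatively, one can bypass Proposition \ref{generalmu} altogether and argue from the cohomological description of the invariant: formula $(\ref{muformula})$ expresses $p^{\mu_{\Lambda(G)}(X(E/k_{\infty}))}$ as the finite $G$-Euler characteristic $\chi(G, X(E/k_{\infty})(p))$, and since we have just seen that $X(E/k_{\infty})(p) = 0$, this Euler characteristic is trivial, forcing $\mu_{\Lambda(G)}(X(E/k_{\infty})) = 0$.

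I do not anticipate any genuine obstacle here: all the substantive work has already been carried out in the proof of Theorem \ref{mhg}, and the corollary is a formal consequence. The one point to state carefully is the implication ``free over $\Lambda(H)$ $\Rightarrow$ $p$-torsionfree $\Rightarrow$ $X(E/k_{\infty})(p) = 0$'', since it is precisely the vanishing of $X(E/k_{\infty})(p)$ that drives either version of the argument; once that is in place, the conclusion $\mu_{\Lambda(G)}(X(E/k_{\infty})) = 0$ is immediate.
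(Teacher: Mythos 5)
Your proposal is correct and follows essentially the same route as the paper: the paper's proof also points to Theorem \ref{mhg} (via Matsuno and Hachimori--Matsuno) to conclude that $X(E/k_{\infty})$ is $\Lambda(H)$-torsionfree, from which the vanishing of the $\mu$-invariant follows. You have merely spelled out the final step — $\Lambda(H)$-torsionfree $\Rightarrow X(E/k_{\infty})(p)=0 \Rightarrow \mu_{\Lambda(G)}=0$, either via Proposition \ref{generalmu} or via formula $(\ref{muformula})$ — which the paper leaves implicit.
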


\begin{proof}
The result follows from argument of Theorem \ref{mhg} above, namely by
using Matsuno's theorem \cite{M} and the main result of
Hachimori-Matsuno \cite{HM} to deduce that $X(E/k_{\infty})$ is
$\Lambda(H)$-torsionfree. $\Box$ \end{proof} We also deduce
from Theorem \ref{mhg} the following consequence for the $\Lambda(H)$-corank
of the $p$-primary parts of the Tate-Shafarevich group $\Sha(E/k_{\infty})$. That is, recall that we consider
the short exact descent sequence of $\Lambda(H)$-modules
\begin{align*} 0 \longrightarrow E(k_{\infty})\otimes {\bf{Q}}_p/{\bf{Z}}_p \longrightarrow 
\Sel(E/k_{\infty}) \longrightarrow \Sha(E/k_{\infty})(p) \longrightarrow 0, \end{align*}
as well as the dual exact sequence \begin{align}\label{dualSES}
0 \longrightarrow \Zha(E/k_{\infty}) \longrightarrow X(E/k_{\infty}) \longrightarrow 
\mathcal{E}(E/k_{\infty}) \longrightarrow 0. \end{align} Here, $\Zha(E/k_{\infty})$ is the Pontryagin dual 
of $\Sha(E/k_{\infty})(p)$, and $\mathcal{E}(E/k_{\infty})$ is that of $E(k_{\infty}) \otimes {\bf{Q}}_p /{\bf{Z}}_p$.
Recall that we let $\epsilon(E/k, 1) = \epsilon(f/k, 1)$ denote the root number of the complex $L$-function
$L(E/k, s) = L(f \times \Theta_k, s)$.

\begin{proposition}\label{tsrank} Assume that $p$ is odd, and moreover that $p$ does not divide the class number 
of $k$ if the root number $\epsilon(E/k, 1)$ equals $-1$. If $E$ has good ordinary reduction at each prime above $p$ in $k$ with 
$\mu_E(k)=0$, then \begin{align*}\rk_{\Lambda(H)}\Zha(E/k_{\infty}) = \begin{cases}\lambda_E(k) &\text{if $\epsilon(E/k, 1)=+1$}
\\ \lambda_E(k) -1&\text{if $\epsilon(E/k, 1)=-1.$}\end{cases} \end{align*} \end{proposition}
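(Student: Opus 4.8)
The plan is to read the rank of $\Zha(E/k_{\infty})$ off the dual descent sequence $(\ref{dualSES})$ using the $\Lambda(H)$-freeness supplied by Theorem $\ref{mhg}$. Since $\mu_E(k)=0$, that theorem gives $X(E/k_{\infty})\cong\Lambda(H)^{\lambda_E(k)}$; as $\Lambda(H)\cong{\bf{Z}}_p[[T]]$ is an integral domain, $\rk_{\Lambda(H)}$ is additive along short exact sequences of finitely generated modules, so $(\ref{dualSES})$ yields
\begin{align*}
\rk_{\Lambda(H)}\Zha(E/k_{\infty}) = \lambda_E(k) - \rk_{\Lambda(H)}\mathcal{E}(E/k_{\infty}).
\end{align*}
It therefore suffices to prove that the $\Lambda(H)$-corank of $E(k_{\infty})\otimes{\bf{Q}}_p/{\bf{Z}}_p$ is $0$ when $\epsilon(E/k,1)=+1$ and $1$ when $\epsilon(E/k,1)=-1$.

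Next I would identify this generic Mordell--Weil corank with the corresponding corank in the dihedral direction. For $n\geq0$ let $D_n^{\cyc}$ be the cyclotomic ${\bf{Z}}_p$-extension of the $n$-th layer $D_n$ of $D_{\infty}/k$ and $H_n=\Gal(k_{\infty}/D_n^{\cyc})$; the cyclotomic control theorems, together with the finiteness of $E(k_{\infty})_{p^{\infty}}$ (Lemma $\ref{finite}$), give $\rk_{\Lambda(H)}\mathcal{E}(E/k_{\infty}) = \lim_n \rk E(D_n^{\cyc})/p^n$. Since $X(E/k^{\cyc})$ is $\Lambda(\Gamma)$-torsion by Theorem $\ref{kato-rohrlich}$, the Hachimori--Matsuno analogue of Kida's formula $\cite{HM}$ (as already invoked in the proof of Theorem $\ref{mhg}$) shows that each $X(E/D_n^{\cyc})$ is $\Lambda(\Gamma_n)$-torsion, so passing from $D_n$ to $D_n^{\cyc}$ increases the Mordell--Weil rank only by $o(p^n)$; one concludes $\rk_{\Lambda(H)}\mathcal{E}(E/k_{\infty}) = \lim_n \rk E(D_n)/p^n$, the generic growth rate of the Mordell--Weil rank along the anticyclotomic tower $D_{\infty}/k$.

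It remains to input the analytic theory of that tower. For $\psi$ running over the finite order characters of $\Omega=\Gal(D_{\infty}/k)$ the root number $\epsilon(E/k,\psi,1)$ is constant and equal to $\epsilon(E/k,1)$, by the usual computation of local root numbers under the running hypotheses on $p$ and $N$. If $\epsilon(E/k,1)=+1$, then by Rohrlich's nonvanishing theorem \cite{Ro}, refined by Vatsal, one has $L(E/k,\psi,1)\neq0$ for all but finitely many $\psi$; together with the bound on Selmer coranks from the Euler system of Kato \cite{KK} (equivalently Kolyvagin's method applied to the corresponding twist) this forces $\rk E(D_n)=O(1)$, so the generic corank is $0$ and $\rk_{\Lambda(H)}\Zha(E/k_{\infty})=\lambda_E(k)$. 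If $\epsilon(E/k,1)=-1$, then every $\epsilon(E/k,\psi,1)$ equals $-1$; Heegner points on the ring class fields of $p$-power conductor, the Gross--Zagier formula \cite{GZ}, and the Cornut--Vatsal theorem settling Mazur's conjecture produce a subgroup of $E(D_n)$ of rank $\geq p^n-O(1)$ for $n\gg0$, while the generic analytic rank one statement (again \cite{Ro}) with Kolyvagin's theorem gives $\rk E(D_n)\leq p^n+O(1)$; the generic corank is then exactly $1$ and $\rk_{\Lambda(H)}\Zha(E/k_{\infty})=\lambda_E(k)-1$.

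The main obstacle is this last step in the case $\epsilon(E/k,1)=-1$: obtaining the Mordell--Weil growth exactly, rather than merely the $\Lambda$-corank bounds, needs the full Gross--Zagier--Kolyvagin input at the generic twists, and the reduction to the anticyclotomic tower in the middle step needs enough control on the Heegner submodule of $E(D_{\infty})\otimes{\bf{Q}}_p/{\bf{Z}}_p$ to pin its $\Lambda(\Omega)$-corank to one. It is exactly here that the hypothesis $p\nmid h_k$ enters, ensuring that the Heegner points at successive layers are not annihilated by the relevant norm-compatibility relations.
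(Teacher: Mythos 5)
Your opening reduction --- applying Theorem \ref{mhg} together with additivity of $\Lambda(H)$-rank along the dual descent sequence $(\ref{dualSES})$ to reduce everything to computing $\rk_{\Lambda(H)}\mathcal{E}(E/k_{\infty})$ --- is exactly the paper's first step. The route you then take is different and, as written, has real gaps. The paper fixes each finite extension $K\subset k^{\cyc}$, notes $K$ is a CM extension of its maximal totally real subfield $F$, and reads off the $\Lambda(\Omega_K)$-rank of $\mathcal{E}(E/D_{\infty}^K)$ directly from structure theorems for anticyclotomic Selmer groups over $F$: Cornut--Vatsal's nonvanishing theorem plus Nekovar's corank-zero theorem when $\epsilon(E/k,1)=+1$, and Cornut--Vatsal plus Howard's $\Lambda$-rank-one theorem for the Heegner Iwasawa module when $\epsilon(E/k,1)=-1$; passing to the inductive limit over $K$ then gives the formula.

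You instead try to recover the $\Lambda(H)$-rank as a growth rate $\lim_n \rk E(D_n)/p^n$ along the finite anticyclotomic layers, and two steps there do not hold up. First, the identification $\rk_{\Lambda(H)}\mathcal{E}(E/k_{\infty}) = \lim_n \rk E(D_n^{\cyc})/p^n$ is not a consequence of the standard control theorems, which control Selmer groups rather than Mordell--Weil groups over infinite extensions; turning such a limit into a $\Lambda$-corank of the Mordell--Weil term needs exactly the structural input (Nekovar, Howard) the paper invokes. Second, the claim that passing from $D_n$ to $D_n^{\cyc}$ changes the Mordell--Weil rank by $o(p^n)$ does not follow from the Hachimori--Matsuno/Kida formula alone: that formula gives $\lambda_E(D_n) = p^n\lambda_E(k) + \cdots$, an $O(p^n)$ bound, so the comparison is uncontrolled. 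Finally, the nonvanishing you need is for anticyclotomic (ring class) characters of $k$, for which the relevant results are those of Vatsal and Cornut--Vatsal; Rohrlich's theorem concerns cyclotomic Dirichlet twists over ${\bf{Q}}$ and is not the right input here. Your observation that the hypothesis $p\nmid h_k$ is there to propagate to the intermediate fields does match the paper, which invokes Iwasawa's classical theorem so that Howard's hypotheses hold at each $K$.
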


\begin{proof}
Observe that $(\ref{dualSES})$ is a short exact sequence of finitely generated $\Lambda(H)$-modules. We know
by Theorem \ref{mhg} that the $\Lambda(H)$-rank of $X(E/k_{\infty})$ is $\lambda_E(k)$. On the other 
hand, we claim that \begin{align}\label{MWrank}\rk_{\Lambda(H)}\mathcal{E}(E/k_{\infty}) = \begin{cases}0 &\text{if $\epsilon(E/k, 1)=+1$}\\
1&\text{if $\epsilon(E/k, 1)=-1.$}\end{cases} \end{align} To see why this is so, let $K$ be any finite extension of $k$ contained in $k^{\cyc}$.
A simple exercise shows that $K$ is a totally imaginary quadratic extension of its maximal totally real subfield $F$. Let $D_{\infty}^K$ denote the compositum
extension $KD_{\infty}$, with Galois group $\Omega_K = \Gal(D_{\infty}^K/K)$. We claim that for any such $K$, we have the rank formula 
\begin{align*} \rk_{\Lambda(\Omega_K)}\mathcal{E}(E/D^K_{\infty}) = \begin{cases}0 &\text{if $\epsilon(E/k, 1)=+1$}
\\1&\text{if $\epsilon(E/k, 1)=-1.$}\end{cases} \end{align*} Indeed, in the first case with $\epsilon(E/k, 1)=+1$, the formula follows from the 
relevant nonvanishing theorem of Cornut-Vatsal \cite[Theorem 1.4]{CV} over $F$ plus the relevant rank theorem(s) of 
Nekovar \cite[Theorem B, Theorem B', and Corollary]{Nek}. In the second case with $\epsilon(E/k, 1)=-1$, the formula follows form the 
relevant nonvanishing theorem of Cornut-Vatsal \cite[Theorem 1.5]{CV} over $F$ plus the relevent rank theorem of Howard \cite[Theorem B (a)]{Ho1}.
Note that to invoke the result of Howard \cite{Ho1} in the latter setting, we have used the classical result due to Iwasawa \cite{Iw} that if $p$
does not divide the class number of $k$, then $p$ does not divide the class number of any finite extension $K$. Taking the inductive limit over all finite 
extensions $K$ of $k$ contained in $k^{\cyc}$, we obtain the stated formula $(\ref{MWrank})$. The result then follows immediately from the exactness of 
$(\ref{dualSES})$. $\Box$ \end{proof} \end{remark} 

\section{Divisibility criteria}

We now discuss various divisibility criteria for the two-variable main conjecture
(Conjecture \ref{2vmc} (iii) above). In particular, granted suitable hypotheses, 
we prove one divisibility of the two-variable main conjecture.

\begin{remark}[Greenberg's criterion.]

The following criterion was suggested to the author by Ralph Greenberg. It
reduces one divisibility of the two-variable main conjecture (Conjecture \ref{2vmc} (iii)) to a certain
specialization criterion for finite order characters of the Galois group $\Gamma = \Gal(k^{cyc}/k)$.
Let us first fix an isomorphism \begin{align}\label{fixedisom} \Lambda(G) \cong {\bf{Z}}_p[[T_1, T_2]], ~
\left( \gamma_1, \gamma_2 \right) &\longmapsto \left( T_1 +1, T_2 +1
\right).\end{align}  Here, we have fixed a topological generator $\gamma_1$ 
of $\Gamma$, as well as a topological generator $\gamma_2$ of
$\Omega$. Fix $f \in S_2(\Gamma_0(N))$ a $p$-ordinary eigenform,
as required for the construction of the $p$-adic $L$-function of Theorem \ref{2vinterpolation}. Recall that we 
write $X(f/k_{\infty})$ to denote the Pontryagin dual of the $p^{\infty}$-Selmer group associated 
to $f$ in $k_{\infty}/k$. If $f$ is the eigenform associated to an elliptic curve $E$ defined over ${\bf{Q}}$, then 
a standard argument allows us to make the identification $X(f/k_{\infty}) =  X(E/k_{\infty})$. In what follows, we shall
fix an elliptic curve $E$ over ${\bf{Q}}$ of conductor $N$ as described in the introduction, with $f$ the eigenform 
associated to $E$ by modularity. We shall then make the identification $X(f/k_{\infty}) = X(E/k_{\infty})$ implicitly in what follows. 

Let $g(T_1, T_2)$ denote the $\Lambda(G)$-characteristic power series of $X(f/k_{\infty})$, or rather
its image under the fixed isomorphism $(\ref{fixedisom})$. (We take this to be zero if $X(f/k_{\infty})$ 
is not $\Lambda(G)$-torsion). Let $L(T_1, T_2) = L_p(f, k)(T_1, T_2)$ denote the image under $(\ref{fixedisom})$ 
of the two-variable $p$-adic $L$-function $L_p(f, k) \in \Lambda(G)$ associated to $f$ by Theorem \ref{2vinterpolation}.
Recall that we write $\Psi$ to denote the set of finite order characters of $\Gamma 
= \Gal(k^{cyc}/k)$. Given an element element $\lambda \in \Lambda(G) $ with associated power series 
$\lambda(T_1, T_2) \in {\bf{Z}}_p[[T_1, T_2]]$, we can and will invoke the usual Weierstrass preparation
theorem for $\lambda(T_1, T_2)$ as an element of the one-variable power series ring $R[[T_1]]$ with
$R= {\bf{Z}}_p[[T_2]]$. We refer the reader to the discussion in Venjakob \cite[Example 2.4, Theorem 3.1, and Corollary 3.2]{Ven}
for a more general account of the situation.

\begin{theorem}\label{greenberg}
Suppose that $p$ does not divide the specialization $g(T_1,0)$. Assume that for each character $\psi
\in \Psi$, we have the inclusion of ideals 
\begin{align} \label{harddiv} \left( L(T_1, \psi(T_2)) \right) \subseteq
\left( g(T_1,\psi(T_2)) \right) \text{  in $\mathcal{O}_{\psi}[[T_1, T_2]]$.} \end{align} Then, we have the inclusion of ideals
\begin{align*} \left( L(T_1, T_2) \right) \subseteq
\left( g(T_1,T_2) \right) \text{  in ${\bf{Z}}_p[[T_1, T_2]]$.} \end{align*} 
\end{theorem}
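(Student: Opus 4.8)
The plan is to reduce the asserted two-variable containment to a single divisibility $P\mid L$, where $P$ is the distinguished factor of $g$ supplied by Weierstrass preparation, and then to extract that divisibility from the one-variable specializations; the hypothesis $p\nmid g(T_1,0)$ is exactly what guarantees that $g$ (hence $P$) behaves well in the variable $T_1$ over $R:={\bf{Z}}_p[[T_2]]$. Work in $\Lambda(G)={\bf{Z}}_p[[T_1,T_2]]=R[[T_1]]$, with $R$ complete local of maximal ideal $\mathfrak{m}_R=(p,T_2)$ and residue field ${\bf{F}}_p$. We may assume $L\neq 0$; moreover $g\neq 0$, since $p\nmid g(T_1,0)$ says precisely that the image of $g$ in $(R/\mathfrak{m}_R)[[T_1]]={\bf{F}}_p[[T_1]]$, namely $g(T_1,0)\bmod p$, is nonzero, say of order $d<\infty$ in $T_1$ (in any case $X(f/k_\infty)$ is $\Lambda(G)$-torsion by Theorem \ref{torsion}). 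By Weierstrass preparation over $R$ (see \cite{Ven}, or \cite{BB}) we may write $g=u\,P$ with $u\in R[[T_1]]^{\times}$ and $P\in R[T_1]$ a distinguished polynomial of $T_1$-degree $d$, i.e.\ monic with every lower coefficient in $\mathfrak{m}_R$. Since $(g)=(P)$ in $\Lambda(G)$, it suffices to prove $P\mid L$.

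First I would perform Weierstrass division of $L$ by the monic polynomial $P$ over $R$: write $L=q\,P+r$ with $q\in R[[T_1]]$ and $r=\sum_{i=0}^{d-1}r_i(T_2)\,T_1^i\in R[T_1]$ of $T_1$-degree $<d$; the goal becomes $r=0$. Fix $\psi\in\Psi$, let $\mathcal{O}_\psi$ be the complete discrete valuation ring obtained by adjoining the values of $\psi$ to ${\bf{Z}}_p$, and let $t_\psi\in\mathcal{O}_\psi$ be the value by which the specialization at $\psi$ substitutes $T_2$; since $\psi$ has $p$-power order, $t_\psi$ lies in the maximal ideal of $\mathcal{O}_\psi$. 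Applying this specialization — a ${\bf{Z}}_p$-algebra homomorphism $\Lambda(G)\to\mathcal{O}_\psi[[T_1]]$ fixing $T_1$ — to the identity $L=qP+r$ yields $L(T_1,\psi(T_2))=q(T_1,\psi(T_2))\,P(T_1,\psi(T_2))+r(T_1,\psi(T_2))$ in $\mathcal{O}_\psi[[T_1]]$. Here $P(T_1,\psi(T_2))$ is still monic of $T_1$-degree exactly $d$ with all lower coefficients in $(p,t_\psi)$, hence again distinguished of degree $d$; $u(T_1,\psi(T_2))$ is still a unit; and $\deg_{T_1}r(T_1,\psi(T_2))<d$.

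Now the hypothesis $(\ref{harddiv})$ gives $(L(T_1,\psi(T_2)))\subseteq(g(T_1,\psi(T_2)))=(P(T_1,\psi(T_2)))$ in $\mathcal{O}_\psi[[T_1]]$, so $P(T_1,\psi(T_2))$ divides $r(T_1,\psi(T_2))$. But a distinguished polynomial of degree $d$ over $\mathcal{O}_\psi$ divides a polynomial of $T_1$-degree $<d$ only if the latter is zero, by uniqueness of Weierstrass division (compare $r(T_1,\psi(T_2))=0\cdot P(T_1,\psi(T_2))+r(T_1,\psi(T_2))$ with $r(T_1,\psi(T_2))=(\cdot)\,P(T_1,\psi(T_2))+0$). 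Hence $r(T_1,\psi(T_2))=0$, i.e.\ $r_i(t_\psi)=0$ for every $i$. As $\psi$ runs over $\Psi$ the points $t_\psi$ run over $\zeta-1$ for $\zeta$ a $p$-power root of unity, an infinite set of distinct points of the open $p$-adic unit disk; a nonzero element of ${\bf{Z}}_p[[T_2]]$ has only finitely many zeros there (again by Weierstrass preparation), so each $r_i=0$ and $r=0$. Therefore $L=qP$ with $q\in{\bf{Z}}_p[[T_1,T_2]]$, whence $L=(qu^{-1})g\in(g)$, as claimed. The argument is symmetric in $T_1$ and $T_2$, working verbatim with the two ${\bf{Z}}_p$-directions exchanged provided the non-divisibility hypothesis is imposed on the restriction complementary to the specialized direction.

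The one genuinely delicate point — what I would flag as the main obstacle — is the bookkeeping under specialization: one must verify that $P(T_1,\psi(T_2))$ remains distinguished of the \emph{same} degree $d$, so that the degree inequality $\deg_{T_1}r(T_1,\psi(T_2))<\deg_{T_1}P(T_1,\psi(T_2))$ survives and forces the remainder to vanish, and that a genuine unit of $R[[T_1]]$ specializes to a unit of $\mathcal{O}_\psi[[T_1]]$. Everything else is formal use of the Weierstrass division and preparation theorems over complete local rings, together with the elementary fact that a nonzero one-variable Iwasawa power series has only finitely many zeros in the open unit disk.
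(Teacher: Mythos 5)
Your proof is correct and follows essentially the same route as the paper's: Weierstrass division of $L$ by $g$ (equivalently, by its distinguished factor $P$) over $R={\bf{Z}}_p[[T_2]]$, specialization at each $\psi$, and the observation that the remainder coefficients vanish at the infinitely many points $t_\psi=\psi(\gamma_2)-1$ in the open $p$-adic unit disk and hence are identically zero. You are in fact slightly more careful than the paper's sketch, which asserts $r(T_1,\psi(T_2))=0$ without spelling out the degree-preservation under specialization — precisely the step you flag and justify by noting that the sub-leading coefficients of $P$ lie in $\mathfrak{m}_R=(p,T_2)$ and so specialize into $(p,t_\psi)\subseteq\mathfrak{m}_{\mathcal{O}_\psi}$.
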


\begin{proof} Observe that we may write $$g(T_1,T_2) =
\sum_{i=0}^{\infty}a_i(T_2) \cdot T_1^i,$$ with $a_i(T_2) \in
{\bf{Z}}_p[[T_2]]$. Since we assume that $p \nmid g(T_1,0)$, it follows
that for some minimal positive integer $m$, $$g(T_1,0) = \sum_{i=0}^{m} 
a_i(0) \cdot T_1^i,$$ with $a_i(0) \in {\bf{Z}}_p^{\times}$. We claim it then
follows that $$L(T_1,T_2) = h(T_1,T_2) \cdot g(T_1,T_2) + r(T_1,T_2),$$ with $h(T_1,T_2)$ a
polynomial in ${\bf{Z}}_p[[T_1, T_2]]$, and $r(T_1,T_2)$ a remainder polynomial in ${\bf{Z}}_p[[T_2]]$
of degree less than $m$. Now, the remainder term is given by $$r(T_1,T_2) =
\sum_{j=0}^{m-1} c_j(T_2) \cdot T_1^j,$$ with $c_j(T_2) \in
{\bf{Z}}_p[[T_2]].$ Granted the inclusion $(\ref{harddiv})$ for each $\psi \in \Psi$, we have that
$$r(T_1, \psi(T_2)) = 0$$ for each $\psi \in \Psi$. It then follows from the Weierstrass preparation theorem that
$$c_j(\psi(T_2)) = 0$$ for each $\psi \in \Psi$ and $j \in \lbrace 0,
\ldots, m-1 \rbrace.$ Hence, we conclude that $r(T_1,T_2)=0$. 
$\Box$ \end{proof} We obtain the following immediate consequence.

\begin{corollary}\label{gpw} Assume Hypothesis \ref{XXX} (i) and (ii). Suppose that for each character $\psi
\in \Psi$, we have the inclusion of ideals 
\begin{align} \label{harddiv} \left( L(T_1, \psi(T_2)) \right) \subseteq
\left( g(T_1,\psi(T_2)) \right) \text{  in $\mathcal{O}_{\psi}[[T_1, T_2]]$.} \end{align} Then, we have the inclusion of ideals
\begin{align*} \left( L(T_1, T_2) \right) \subseteq
\left( g(T_1,T_2) \right) \text{  in ${\bf{Z}}_p[[T_1, T_2]]$.} \end{align*} \end{corollary}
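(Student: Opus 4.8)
The plan is to deduce Corollary \ref{gpw} directly from Theorem \ref{greenberg}. The only hypothesis of that theorem not already present in the statement of the Corollary is the non-divisibility $p \nmid g(T_1, 0)$, so the whole task is to show that this condition is forced by Hypothesis \ref{XXX}(i) and (ii). First I would record that Hypothesis \ref{XXX}(ii) guarantees the hypotheses of Theorem \ref{2vinterpolation} (in particular $p \geq 5$ and $N$ prime to the discriminant of $k$), so that $L_p(f,k)$ and hence the power series $L(T_1, T_2)$ are defined, and that $g(T_1, T_2)$ is the characteristic power series of the $\Lambda(G)$-torsion module $X(E/k_{\infty})$ by Theorem \ref{torsion}.

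The key step is the vanishing $\mu_E(k) = 0$. Since $\overline{\rho}_E$ is surjective by Hypothesis \ref{XXX}(i), the $\mu$-invariant of the cyclotomic $p$-adic $L$-function $L_p(E/{\bf{Q}})$ is zero, because the reduction modulo $p$ of the relevant modular symbols does not vanish once the associated maximal ideal of the Hecke algebra is non-Eisenstein; Kato's divisibility \cite{KK} in the cyclotomic main conjecture over ${\bf{Q}}$ — which is an integral divisibility precisely because $\overline{\rho}_E$ is surjective — then forces $\mu_E({\bf{Q}}) = 0$. The same reasoning applies to the quadratic twist $E'$ of $E$ by $\omega = \omega_{k/{\bf{Q}}}$, whose residual representation $\overline{\rho}_E \otimes \omega$ is again surjective onto $\operatorname{GL}_2({\bf{F}}_p)$ for $p$ odd. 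Splitting $\Sel(E/k^{\cyc}) = \Sel(E/{\bf{Q}}^{\cyc}) \oplus \Sel(E'/{\bf{Q}}^{\cyc})$ under the idempotents of $\Gal(k/{\bf{Q}})$ — the Artin-formalism restriction argument indicated in the introduction — one obtains $\mu_E(k) = \mu_E({\bf{Q}}) + \mu_{E'}({\bf{Q}}) = 0$.

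Granted $\mu_E(k) = 0$, Theorem \ref{mhg} yields a $\Lambda(H)$-module isomorphism $X(E/k_{\infty}) \cong \Lambda(H)^{\lambda_E(k)}$. Under the fixed isomorphism $(\ref{fixedisom})$ we have $\Lambda(G) = \Lambda(H)[[T_1]]$ with $\Lambda(H) = {\bf{Z}}_p[[T_2]]$, so the action of $\gamma_1$ on the free rank-$r$ module $X(E/k_{\infty})$, with $r := \lambda_E(k)$, is given by a matrix $M \in M_r(\Lambda(H))$, and $X(E/k_{\infty}) \cong \coker\left( \Lambda(G)^r \xrightarrow{\ T_1 I - M\ } \Lambda(G)^r \right)$ with $T_1 I - M$ injective. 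Hence $g(T_1, T_2) = \det(T_1 I - M)$ up to a unit, a monic polynomial in $T_1$ of degree $r$ with coefficients in $\Lambda(H) = {\bf{Z}}_p[[T_2]]$. Reducing modulo $T_2$ gives $g(T_1, 0) = u(T_1,0)\cdot\det(T_1 I - \overline{M})$ with $\overline{M} \in M_r({\bf{Z}}_p)$ and $u(T_1,0)$ a unit in ${\bf{Z}}_p[[T_1]]$, again monic in $T_1$; a monic polynomial over ${\bf{Z}}_p$ is never divisible by $p$, so $p \nmid g(T_1, 0)$ and Theorem \ref{greenberg} applies verbatim.

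The main obstacle is the vanishing of $\mu_E(k)$: it is not formal, and is exactly where the arithmetic of Hypothesis \ref{XXX}(i) enters, via Kato's Euler system and the surjectivity of $\overline{\rho}_E$ (together with the twisting/restriction bookkeeping relating $\mu_E(k)$ to $\mu_E({\bf{Q}})$ and $\mu_{E'}({\bf{Q}})$). A secondary, routine point is the identification of $\operatorname{char}_{\Lambda(G)} X(E/k_{\infty})$ with the characteristic polynomial of the $\gamma_1$-action on the free $\Lambda(H)$-module $X(E/k_{\infty})$, which uses that the characteristic ideal agrees with the zeroth Fitting ideal for a square injective presentation over the regular domain $\Lambda(G)$, and that this description is compatible with the specialization $T_2 \mapsto 0$ precisely because $X(E/k_{\infty})$, being $\Lambda(H)$-free, is $T_2$-torsion-free.
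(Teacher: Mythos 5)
Your proposal rests on a misreading of which variable is which, and then on an argument that is not a theorem. In the paper's Theorem \ref{greenberg} the characters $\psi$ run over the set $\Psi$ of finite order characters of the \emph{cyclotomic} Galois group $\Gamma = \Gal(k^{\cyc}/k)$, and the divisibility \eqref{harddiv} specializes the second variable $T_2$ to $\psi(T_2)$. For this to make sense it must be $T_2$, not $T_1$, that corresponds to the cyclotomic direction (the displayed isomorphism \eqref{fixedisom} as printed has $\gamma_1 \in \Gamma$, $\gamma_2 \in \Omega$, but this is contradicted both by the way $\Psi$ is used in Theorem \ref{greenberg} and by the later remark in the paper that ``we fixed a topological generator $\gamma_2$ of $\Gamma$''). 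Consequently $g(T_1,0)$ is the specialization along the cyclotomic direction being trivial, i.e.\ the image in $\Lambda(\Omega)$, and the hypothesis $p \nmid g(T_1,0)$ is the statement that the \emph{dihedral/anticyclotomic} $\mu$-invariant of $f$ over $D_\infty/k$ vanishes. This is exactly how the paper phrases it, and the paper's proof of Corollary \ref{gpw} is a one-line citation of the main theorem of Pollack--Weston \cite{PW}, which establishes the vanishing of that anticyclotomic $\mu$-invariant under precisely the conditions of Hypothesis \ref{XXX} (surjective $\overline{\rho}_E$; $p\geq 5$; $(N,D)=1$ with $N^-$ a squarefree product of an odd number of inert primes in the root number $+1$ case).

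Your argument instead tries to establish $\mu_E(k) = 0$, the vanishing of the \emph{cyclotomic} $\mu$-invariant, and does so by claiming that surjectivity of $\overline{\rho}_E$ forces the analytic cyclotomic $\mu$-invariant of $L_p(E/{\bf Q})$ to vanish (``because the reduction modulo $p$ of the relevant modular symbols does not vanish once the associated maximal ideal of the Hecke algebra is non-Eisenstein''), after which Kato's divisibility would give $\mu_E({\bf Q})=0$. This step is a genuine gap: the vanishing of the cyclotomic $\mu$-invariant of an elliptic curve over ${\bf Q}$ at a good ordinary prime is Greenberg's conjecture, which is open in general and is emphatically \emph{not} a consequence of the residual representation being irreducible or surjective. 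Non-vanishing of a modular symbol modulo $p$ controls the constant term of the power series, not the $\mu$-invariant of the whole power series, so the claimed implication fails. Even leaving that aside, you would be establishing the wrong hypothesis: Theorem \ref{greenberg} needs the anticyclotomic $\mu$-invariant to vanish, not the cyclotomic one, and Hypothesis \ref{XXX} is tailored to invoke Pollack--Weston for exactly that anticyclotomic vanishing. Your secondary observation — that a free $\Lambda(H)$-module structure on $X(E/k_\infty)$ would make the characteristic power series monic in the complementary variable after reducing modulo the augmentation ideal — is a sensible idea, but it is hostage to the false $\mu$-vanishing claim and to the variable mix-up, so the proof as a whole does not go through.
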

\begin{proof} Theorem \ref{greenberg} requires that $p$ does not divide the specialization of the characteristic power series 
$g(T_1, 0)$, equivalently that the dihedral or anticyclotomic $\mu$-invariant associated to $f$ in the tower $D_{\infty}/k$
vanishes. Assuming Hypothesis \ref{XXX} (i) and (ii), the main result of Pollack-Weston \cite{PW} shows that this is always 
the case if the underlying eigenform $f$ is $p$-ordinary. $\Box$ \end{proof} \end{remark}

\begin{remark}[A basechange criterion]

Let $K$ be any finite extension of $k$ contained in the cyclotomic extension $k^{\cyc}$. Let $D^K_{\infty}$ denote the 
compositum extension $K D_{\infty},$ with $\Omega_K = \Gal(D^K_{\infty}/K)$ the corresponding Galois group. Note that $\Omega_K$
is topologically isomorphic to ${\bf{Z}}_p.$ Let $\Psi_K$ denote the set of (primitive) characters of order $[K:k]$ of the Galois group $\Gal(K/k)$. 
Hence, we have the decomposition \begin{align*} \Psi = \bigcup_{k \subset K \subset k^{\cyc}} \Psi_K. \end{align*}  Recall that given a character 
$\psi \in \Psi,$ we write $\mathcal{O}_{\psi}$ to denote the ring of integers obtained from ${\bf{Z}}_p$ by adding the values of $\psi.$ 
 Let us also write $\mathcal{O}_{\Psi_K}$ to denote the ring of integers obtained by adding to ${\bf{Z}}_p$ the values 
 of each of the characters in the set $\Psi_K$.  Given a polynomial $f(T_1, T_2) \in {\bf{Z}}_p[[T_1, T_2]]$, let us write 
 \begin{align}\label{prodspec} f(T_1, T_2^K) = \prod_{\psi \in \Psi_K} f(T_1, \psi(T_2))\end{align} to denote the product of 
 specializations of $f(T_1, T_2)$ to the characters of the set $\Psi_K.$ Note that this specialization product $f(T_1, T_2^K)$ lies
 in the polynomial ring ${\bf{Z}}_p[[T_1, T_2^K]] = \mathcal{O}_{\Psi_K}[[T_1]]$. Note as well that we have the identifications  \begin{align*} f(T_1, T_2^k)
 = f(T_1, {\bf{1}}(T_2)) = f(T_1, 0) \in {\bf{Z}}_p[[T_1]]. \end{align*}

\begin{proposition}\label{bccrit} Assume that for any finite extension $K$ of $k$ contained in
$k^{\cyc}$, we have the inclusion of ideals \begin{align}\label{divK} \left(   L(T_1, T_2^K) \right) 
\subseteq  \left(   g (T_1, T_2^K) \right) \text{  in  }  \mathcal{O}_{\Psi_K}[[T_1]].
\end{align} Assume additionally that the root number of the central value $L(f/ k, 1)$ is $+1$, and moreover 
that we have a nontrivial equality of ideals for $K=k,$  
\begin{align}\label{eqk}\left(   L(T_1, 0) \right) = \left( g(T_1,0) \right)  \text{ in }  {\bf{Z}}_p[[T_1]]. \end{align}
Then, for each character $\psi \in \Psi$, we have the inclusion of ideals \begin{align*}
\left( L(T_1, \psi(T_2)) \right) \subseteq \left( g(T_1, \psi(T_2))\right) \text{ in } \mathcal{O}_{\psi}[[T_1]]. \end{align*}\end{proposition}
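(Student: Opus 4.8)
The plan is to extract information about $L(T_1, T_2)$ and $g(T_1, T_2)$, viewed as power series in $R[[T_1]]$ with $R = {\bf{Z}}_p[[T_2]]$, by combining the product-specialization divisibilities $(\ref{divK})$ over all layers $K$ with the nontrivial equality $(\ref{eqk})$ at the bottom layer $K=k$. The key point is that, since the root number is $+1$, the equality $(\ref{eqk})$ tells us $g(T_1,0)$ is nonzero; moreover one expects $g(T_1,0)$ to have a well-behaved Weierstrass factorization over ${\bf{Z}}_p$, with $\mu$-invariant zero in the $T_1$-direction (this is where the equality with $L(T_1,0)$, a genuine $p$-adic $L$-function value, is used — its $\mu$-invariant controls the $p$-divisibility). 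First I would set up the Weierstrass preparation for $g(T_1,T_2) \in R[[T_1]]$: write $g(T_1,T_2) = p^{\nu}\,U(T_1,T_2)\,P(T_1,T_2)$ with $U$ a unit, $P$ distinguished of degree $d$, and similarly track $L(T_1,T_2) = p^{\nu'} U' P'$. The divisibility $(\ref{HDiv})$ that we want, for a single $\psi$, amounts to: $\nu' \geq \nu$ after specialization and $P(T_1,\psi(T_2)) \mid P'(T_1,\psi(T_2))$ in $\mathcal{O}_\psi[[T_1]]$.

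The core of the argument is a descent on the layers $K$. Each $\Psi_K$ for $K \neq k$ consists of the primitive characters of exact order $[K:k] = p^{n}$, i.e. those $\psi$ with conductor exactly $p^{n}$ in $\Gamma$; and $\Psi = \bigsqcup_K \Psi_K$. The product $f(T_1, T_2^K) = \prod_{\psi \in \Psi_K} f(T_1,\psi(T_2))$ is, up to a unit and a norm-type identification, the image of $f(T_1,T_2)$ under the ring map $R = {\bf{Z}}_p[[T_2]] \to \mathcal{O}_{\Psi_K}[[T_1]]$ sending $T_2 \mapsto$ (a uniformizer of the ring of integers in the $p^n$-th cyclotomic extension, shifted). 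I would argue inductively: knowing $(\ref{divK})$ at level $K$ gives, after passing through the irreducible factorization of $(1+T_2)^{p^n}-1$ over ${\bf{Z}}_p$, that the $\Phi_{p^n}$-component of $L(T_1,T_2)$ is divisible by the $\Phi_{p^n}$-component of $g(T_1,T_2)$ modulo contributions already controlled at lower levels. Combining all levels reconstructs the full divisibility of $L$ by $g$ in $R[[T_1]]$, hence in particular gives $(\ref{HDiv})$ for each individual $\psi$ by further specializing the cyclotomic polynomial factor to the point $\psi(T_2)$.

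The role of $(\ref{eqk})$ is twofold: it guarantees $g(T_1,0) \neq 0$, so that $g$ is not the zero power series and the $T_1$-degree $d$ is finite; and, more subtly, the equality (as opposed to mere divisibility) forces the ``$T_2$-adic leading term'' of $g$ — i.e. the reduction $g(T_1,T_2) \bmod T_2$ — to coincide with that of $L$ up to a unit, which rigidifies the factorization and prevents the divisor of $g$ from acquiring spurious extra vanishing along $T_2 = \psi(T_2)$ for nontrivial $\psi$ that isn't already present in $L$. Concretely, I would use that the distinguished polynomial $P(T_1,T_2)$ reduces mod $T_2$ to a distinguished polynomial over ${\bf{Z}}_p$ of the same degree $d$, and that $(\ref{eqk})$ pins this down to equal the analogous reduction of $P'(T_1,T_2)$; a continuity/Nakayama argument in $R[[T_1]]$ then propagates the divisibility from the closed point $T_2 = 0$ outward to all $\psi$.

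The main obstacle I anticipate is the bookkeeping of how the product-specialization $(\ref{divK})$ over the set $\Psi_K$ relates to divisibility of the individual ``$\Phi_{p^n}$-isotypic'' pieces: a priori, divisibility of a product $\prod_\psi L(T_1,\psi(T_2))$ by $\prod_\psi g(T_1,\psi(T_2))$ in $\mathcal{O}_{\Psi_K}[[T_1]]$ does not obviously give divisibility factor-by-factor, because $\mathcal{O}_{\Psi_K}[[T_1]]$ is not a domain in an obvious Galois-stable way once one remembers that the factors are permuted by $\Gal(K/k)$. The resolution should be that $\mathcal{O}_{\Psi_K}[[T_1]]$ is (a localization/completion of) ${\bf{Z}}_p[[T_1,T_2]]/(\Phi_{p^n}(1+T_2))$, which is a regular domain after inverting $p$, and the two sides are Galois-stable, so one can descend the divisibility to the level of the ${\bf{Z}}_p[[T_1,T_2]]/(\Phi_{p^n})$-ideal and then base-change to a single $\psi$. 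Handling the prime $p$ itself — i.e. comparing the $\mu$-invariants $\nu$ and $\nu'$ rather than just the distinguished parts — is the other delicate point, and this is precisely where the hypothesis that the root number is $+1$ together with the \emph{equality} $(\ref{eqk})$ (which one should think of as encoding that the relevant anticyclotomic $\mu$-invariant vanishes, as in the Pollack--Weston input invoked elsewhere) is essential.
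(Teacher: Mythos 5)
Your approach is genuinely different from the paper's, and I don't think it closes the key gap. The paper's proof proceeds in two steps: \textbf{(a)} it divides the product divisibility $(\ref{divK})$ by the nontrivial equality $(\ref{eqk})$ at $K=k$ and runs a division-with-remainder argument to conclude that the specialization divisibility $g(T_1,\psi(T_2)) \mid L(T_1,\psi(T_2))$ holds for \emph{at least one} nontrivial $\psi\in\Psi_K$; then \textbf{(b)} it invokes Shimura's algebraicity theorem to observe that the interpolated values $\mathcal{L}(f\times\Theta(\rho\psi),1)$ for $\psi\in\Psi_K$ are Galois conjugate, so by uniqueness of the interpolating measure the specializations $L(T_1,\psi(T_2))$ are Galois conjugate, and since $\Gal(K/k)$ acts transitively on $\Psi_K$, divisibility for one $\psi$ propagates to all of $\Psi_K$. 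Step (b) is the arithmetic crux, and it has no analogue in your sketch.

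Your plan — Weierstrass preparation over $R={\bf Z}_p[[T_2]]$, induction on the layers via the factorization of $(1+T_2)^{p^n}-1$, and a ``continuity/Nakayama'' propagation from $T_2=0$ — founders on exactly the obstacle you flag but don't resolve: passing from divisibility of the \emph{product} $\prod_{\psi\in\Psi_K}g(T_1,\psi(T_2)) \mid \prod_{\psi\in\Psi_K}L(T_1,\psi(T_2))$ to divisibility of a single factor. Two specific problems: (1) you describe $f(T_1,T_2^K)$ as ``the image of $f$ under the ring map $T_2\mapsto$ uniformizer,'' but it is actually the \emph{norm} from ${\bf Z}_p[\zeta_{p^n}][[T_1]]$ to ${\bf Z}_p[[T_1]]$ of that image, and norm divisibility in the base does not imply divisibility of elements in the extension (Galois-stability of the two ideals does not repair this, since the individual factors $g(T_1,\psi(T_2))$ are permuted by $\Gal(K/k)$, not fixed). (2) Your use of $(\ref{eqk})$ to ``rigidify the $T_2$-adic leading term'' only pins down the specialization at the trivial character; it gives no control at nontrivial $\psi$, so the claimed ``propagation outward'' by Nakayama is unsupported — Nakayama governs surjectivity of module maps, not divisibility of power series at points away from the maximal ideal. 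What is missing from your argument, and what the paper supplies, is a reason why the set $\{L(T_1,\psi(T_2))\}_{\psi\in\Psi_K}$ is a single Galois orbit: that is precisely the Shimura-algebraicity input, and it is what turns one-factor divisibility into all-factor divisibility.
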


\begin{proof}  Since we assume that the root number $\epsilon(f/k,1)$ is equal to $+1$, we know for instance by 
the nonvanishing theorems of  Vatsal \cite{Va} and more generally Cornut-Vatsal \cite{CV} that the $p$-adic $L$-function $L(T_1, 0)$ does 
not vanish identically. Let $K$ be any finite extension of $k$ contained in $k^{\cyc}$. Using the equality $(\ref{eqk})$, we may then divide each side of $(\ref{divK})$ 
by the corresponding ideals in $(\ref{eqk})$ to obtain for each extension $K$ the inclusion of ideals 
\begin{align}\label{quotient}  \left(  \frac{L(T_1, T_2^K)}{L(T_1, 0)}   \right) \subseteq   \left(  \frac{g(T_1, T_2^K)}{g(T_1, 0)}   \right) 
\text{ in } \mathcal{O}_{\Psi_K}[[T_1]]. \end{align}  Now, the divisibility $(\ref{divK})$ implies that
we have for each extension $K$ the relation
$$g(T_1, T_2^K) = f(T_1, T_2^K) \cdot L(T_1, T_2^K) + r(T_1, T_2^K).$$ Here, $f(T_1,
T_2^K)$ denotes some polynomial in ${\bf{Z}}_p[[T_1, T_2^K]] = \mathcal{O}_{\Psi_K}[[T_1]]$, and $r(T_1, T_2^K)$ the corresponding 
remainder term. It then follows from $(\ref{quotient})$ that \begin{align*} \prod_{\psi \in \Psi_K \atop \psi \neq 1} r(T_1, \chi(T_2)) =0.\end{align*} 
Hence, we deduce that  for each finite extension $K$ of $k$ contained in $k^{\cyc},$ 
there exists a nontrivial character $\psi \in \Psi_K$ such that \begin{align}\label{twisted}
\left( L(T_1, \psi(T_2)) \right) \subseteq \left( g(T_1, \psi(T_2))\right) \text{ in } \mathcal{O}_{\psi}[[T_1]]. \end{align}
We now argue that if the divisibility $(\ref{twisted})$ holds for one (nontrivial) character in $\Psi_K$, then it holds for 
all (nontrivial) characters in $\Psi_K$. To see why this is, let $\mathcal{L}(E/k, \mathcal{W}, 1) = \mathcal{L}(f \times \Theta(\mathcal{W}), 1)$ 
denote the value \begin{align}\label{algL} \frac{L(f \times \Theta(\mathcal{W}), 1)}{8\pi \langle f, f \rangle}, \end{align} where $\mathcal{W}$
is any finite order character of the Galois group $G$. Recall that the value $(\ref{algL})$ is algebraic by Shimura's theorem \cite{Sh1}. In
particular, for any finite order character $\rho$ of $\Omega$, the values $\mathcal{L}(f \times \Theta(\rho\psi), 1)$ with $\psi \in \Psi_K$ are
Galois conjugate by Shimura's theorem. Hence, by uniqueness of interpolation series, we deduce that the specializations $L(T_1, \psi(T_2))$
with $\psi \in \Psi_K$ are Galois conjugate. We can then deduce that if the divisibility $(\ref{twisted})$ holds for one character $\psi \in \Psi_K$,
then it holds for all characters $\psi \in \Psi_K$. Taking the union of all finite extensions $K$ of $k$ contained in $k^{\cyc}$, the result follows.
$\Box$ \end{proof}

\begin{corollary} Keep the hypotheses of Proposition \ref{bccrit} above. If $p$ does not divide the specialization $g(T_1, 0)$, then there is an 
inclusion of ideals \begin{align} \label{harddiv} \left( L(T_1, \psi(T_2)) \right) \subseteq
\left( g(T_1,\psi(T_2)) \right) \text{  in $\mathcal{O}_{\psi}[[T_1, T_2]]$.} \end{align} 
\end{corollary}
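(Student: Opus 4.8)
The plan is to obtain the statement as an immediate consequence of Proposition \ref{bccrit} together with a flat base-change remark. Under the hypotheses carried over---the product-of-specializations inclusions $\left(L(T_1,T_2^K)\right)\subseteq\left(g(T_1,T_2^K)\right)$ in $\mathcal{O}_{\Psi_K}[[T_1]]$ for every finite $K$ with $k\subseteq K\subseteq k^{\cyc}$, the root number condition $\epsilon(f/k,1)=+1$, and the nontrivial equality $\left(L(T_1,0)\right)=\left(g(T_1,0)\right)$ in ${\bf{Z}}_p[[T_1]]$---Proposition \ref{bccrit} already yields, for each $\psi\in\Psi$, the inclusion $\left(L(T_1,\psi(T_2))\right)\subseteq\left(g(T_1,\psi(T_2))\right)$ in the one-variable ring $\mathcal{O}_{\psi}[[T_1]]$.

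First I would invoke Proposition \ref{bccrit} to produce that divisibility in $\mathcal{O}_{\psi}[[T_1]]$ for every $\psi\in\Psi$. Next I would observe that, since the substitution $T_2\mapsto\psi(\gamma_2)-1$ takes values in $\mathcal{O}_{\psi}$, the specialized series $L(T_1,\psi(T_2))$ and $g(T_1,\psi(T_2))$ genuinely lie in $\mathcal{O}_{\psi}[[T_1]]$, which we regard inside $\mathcal{O}_{\psi}[[T_1,T_2]]$ as the subring of power series independent of $T_2$. Since $\mathcal{O}_{\psi}[[T_1,T_2]]$ is free, and in particular faithfully flat, over $\mathcal{O}_{\psi}[[T_1]]$, an identity $L(T_1,\psi(T_2))=h(T_1)\,g(T_1,\psi(T_2))$ in the smaller ring persists in the larger one; this yields the asserted inclusion of ideals in $\mathcal{O}_{\psi}[[T_1,T_2]]$ for each $\psi\in\Psi$.

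It remains to record the role of the extra hypothesis $p\nmid g(T_1,0)$: by the result of Pollack--Weston invoked in Corollary \ref{gpw} it is the vanishing of the anticyclotomic $\mu$-invariant of $f$, and it is precisely the remaining input needed so that the character-by-character specialization divisibilities just established---which are exactly the specialization hypothesis appearing in Theorem \ref{greenberg}---may be fed into that theorem to conclude the two-variable divisibility $\left(L(T_1,T_2)\right)\subseteq\left(g(T_1,T_2)\right)$ in ${\bf{Z}}_p[[T_1,T_2]]$. The corollary is thus essentially a repackaging of Proposition \ref{bccrit}, so there is no genuine obstacle in the argument itself; the only substantive content lies upstream, in verifying the hypotheses of Proposition \ref{bccrit}---above all the equality $(\ref{eqk})$ for $K=k$, which (as remarked in the text) would follow from Howard's nonvanishing criterion for the anticyclotomic $p$-adic $L$-function.
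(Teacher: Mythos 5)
Your proposal is correct and matches the paper's proof, which is simply to feed the character-by-character divisibilities produced by Proposition \ref{bccrit} into Theorem \ref{greenberg} under the additional hypothesis $p\nmid g(T_1,0)$. Two small observations: the passage from $\mathcal{O}_{\psi}[[T_1]]$ to $\mathcal{O}_{\psi}[[T_1,T_2]]$ needs no flatness argument (an identity $L=hg$ in a subring holds in any overring, and faithful flatness is the tool for descent, not ascent), and the displayed conclusion of the corollary is evidently a typographical repetition of the hypothesis $(\ref{harddiv})$ of Theorem \ref{greenberg} — as both you and the paper's one-line proof indicate, the intended conclusion is the two-variable inclusion $\left(L(T_1,T_2)\right)\subseteq\left(g(T_1,T_2)\right)$ in ${\bf{Z}}_p[[T_1,T_2]]$.
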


\begin{proof} Apply Theorem \ref{greenberg} to Proposition \ref{bccrit} above. $\Box$ \end{proof}

\begin{remark}[Some remarks on further reductions.]

A simple argument shows that each finite extension $K$ of $k$ contained in $k^{\cyc}$ is a totally imaginary quadratic extension of 
its maximal totally real subfield $F$. Each such totally real field $F$ is abelian. Hence, we can associate to $f$ a Hilbert modular 
eigenform ${\bf{f}}$ over $F$ via the theory of cyclic basechange. It is then simple to see (via Artin formalism for instance) that the root
number of the complex Rankin-Selberg $L$-function $L({\bf{f}} \times \Theta_K, s)$ is equal to that of $L(E/k, s) = L(f \times \Theta_k, s)$.
In particular, the divisibilities $(\ref{divK})$ of Proposition \ref{bccrit} would follow from the dihedral/anticyclotomic main conjectures for 
${\bf{f}}$ in the dihedral/anticyclotomic ${\bf{Z}}_p^d$-extension of $K$, where $d = [F:{\bf{Q}}]$. For results in this direction, see for instance
 the generalizations to totally real fields of work of Bertolini-Darmon \cite{BD} (as well as Pollack-Weston \cite{PW}) by Longo \cite{Lo3}
and the author \cite{VO2}. For the equality condition $(\ref{eqk})$ of Proposition \ref{bccrit}, see the result of 
Howard \cite[Theorem 3.2.3]{Ho} with the main result of Pollack-Weston \cite{PW}. These works combined show that the inclusion
$(L(T_1, 0)) \subseteq (g(T_1, 0))$ often holds, in which case the reverse inclusion $(g(T_1, 0)) \subseteq (L(T_1, 0))$ can be reduced
by Howard \cite[Theorem 3.2.3(c)]{Ho} to a certain nonvanishing criterion for the associated $p$-adic $L$-functions $L(T_1, 0) \in \Lambda(\Omega)$.
\end{remark}

\begin{remark}[Some remarks on the setting of root number minus one.] 

In the setting where the root number $\epsilon(f/k, 1)$ of $L(f/k, 1)$ is equal to $-1$, then we know that $L(T_1, T_2) =0$ by the functional equation for 
$L(T_1, T_2)$ given in Corollary \ref{FE} (derived from the fact that the complex central value $L(f/k, 1)$ vanishes). 
It follows that $L(T_1, T_2^K) =0$ for all finite extensions $K$ of $k$ contained in $k^{\cyc}$. Hence in this setting, the hypotheses
of Proposition \ref{bccrit} do not hold. Indeed, consider the basechange setup described in the remark above, where ${\bf{f}}$
is the basechange Hilbert modular eigenform defined over the maximal totally real subfield $F$ of $K$. The formulation of the analogous 
dihedral/anticyclotomic main conjecture in this setting asserts that each dual Selmer group $X({\bf{f}}/D_{\infty}^K)$ has 
$\Lambda(\Omega_K)$-rank one, and moreover that there is an equality of ideals \begin{align*}
\left(  \operatorname{char}_{\Lambda(\Omega_K)} (X({\bf{f}}/D_{\infty}^K)_{\tors} )  \right)  &= 
\left(  \operatorname{char}_{\Lambda(\Omega_K)} (\mathfrak{X}({\bf{f}}/D_{\infty}^K))  \right)  \text{in $\Lambda(\Omega_K)$}
 \end{align*} Here, $X({\bf{f}}/D_{\infty}^K)_{\tors} $ denotes the $\Lambda(\Omega_K)$-torsion submodule of $X({\bf{f}}/D_{\infty}^K)$,
 and $\mathfrak{X}({\bf{f}}/D_{\infty}^K)$ is the $\Lambda(\Omega_K)$-torsion submodule defined by $\mathfrak{S}({\bf{f}}/D_{\infty}^K)/
 H({\bf{f}}/D_{\infty}^K)$, where $\mathfrak{S}({\bf{f}}/D_{\infty}^K)$ is the compactified Selmer group of ${\bf{f}}$ over $D_{\infty}^K$,
 and $H({\bf{f}}/D_{\infty}^K)$ is the so-called Heegner submodule generated by CM points (defined on an associated quaternionic Shimura curve).
 We refer the reader to Howard \cite[Theorem B]{Ho1} or Perrin-Riou \cite{PR} for more details on this formulation.
Anyhow, the dual Selmer group $X({\bf{f}}/D^K_{\infty})$ does not have a $\Lambda(\Omega_K)$ characteristic power series in this setting. If we adopt the standard convention of taking 
 the characteristic power series to be $0$ in this case, then we obtain for each extension $K$ the trivial equality of ideals $\left(   L(T_1, T_2^K) \right) =  \left(   g (T_1, T_2^K) \right) \text{  in  }  
 \mathcal{O}_{\Psi_K}[[T_1]]$. It therefore seems unlikely that we can do any better than Theorem \ref{gpw} for determining a two-variable divisibility criterion by considering main conjecture divisibilities 
 via basechange. This is especially apparent after noting of the shape of the two-variable main conjecture in this case, as described for instance in Howard \cite{Ho0}. To be somewhat more precise, recall that we fixed a topological 
 generator $\gamma_2$ of $\Gamma$ for our fixed isomorphism $(\ref{fixedisom})$. The two-variable $p$-adic $L$-function $L_p(f, k_{\infty})$ can then be written as a power series \begin{align*}  \mathcal{L}_f = 
 \mathcal{L}_{f, 0} + \mathcal{L}_{f, 1} \cdot (\gamma_2 -1) + \ldots \in \Lambda(G), \end{align*} with coefficients $\mathcal{L}_{f, n} \in {\bf{Z}}_p[[\Omega]]$. In the case where the root number $\epsilon(f/k, 1)$ is $-1$, we know
 by the associated functional equation(s) that $\mathcal{L}_{f,0} =0$. Another result of Howard (proving one divisibility of a conjecture made by Perrin-Riou in \cite{PR}) shows that the second
 term $\mathcal{L}_{f,1}$ can be expressed  as a certain twisted sum of images under any appropriate $p$-adic height pairing of some associated regularized Heegner points (see  \cite[Theorem A]{Ho0}). If $p$ does not 
 divide the level $N$ of $f$, then we know by Theorem \ref{torsion} that $\operatorname{char}_{\Lambda(G)}X(f/k_{\infty})$ exists, equivalently that $g(T_1, T_2) \neq 0.$ Now, two-variable characteristic 
 power series $\operatorname{char}_{\Lambda(G)}X(f/k_{\infty})$ can be written as a power series \begin{align*}  \mathcal{G}_f = \mathcal{G}_{f, 0} + \mathcal{G}_{f, 1} \cdot (\gamma_2 -1) + \ldots \in \Lambda(G), \end{align*} with coefficients 
 $\mathcal{G}_{f, n} \in {\bf{Z}}_p[[\Omega]]$. Hence, if we know that $g(T_1, 0) =0$, then we find that $\mathcal{G}_{f,0}=0.$ This would reduce our task to showing $\mathcal{G}_f \mid \mathcal{L}_f$ in $\Lambda(G)$, where both
 $\mathcal{G}_f$ and $\mathcal{L}_f$ correspond under the fixed isomorphism $(\ref{fixedisom})$ to power series that vanish at $T_2 =0.$ It is then apparent from this fact that comparing the products of specializations to characters
 $\psi \in \Psi_K$ of these power series alone will not give much more information, as $\Psi_K$ contains the trivial character. \end{remark} \end{remark}

\end{document}